\documentclass[12pt]{article}
\usepackage{amsmath,amssymb,amsthm,bbm}
\usepackage[colorlinks=true,citecolor=blue,urlcolor=blue,linkcolor=blue]{hyperref}
\usepackage[margin=1in]{geometry}
\usepackage[round]{natbib}
\usepackage{color}
\usepackage{graphicx}
\usepackage{stmaryrd} % bag symbol
\usepackage{microtype}
\usepackage{enumitem}
\usepackage{ragged2e}

\newtheorem{theorem}{Theorem}
\newtheorem{lemma}{Lemma}
\newtheorem{proposition}{Proposition}
\newtheorem{corollary}{Corollary}

\theoremstyle{definition}
\newtheorem{remark}{Remark}

\newcommand{\R}{\mathbb{R}}

\newcommand{\EE}[1]{\mathbb{E}\left[{#1}\right]}

\newcommand{\Ep}[2]{\mathbb{E}_{{#1}}\left[{#2}\right]}
\newcommand{\Epst}[3]{\mathbb{E}_{{#1}}\left[{#2}\ \middle| \ {#3}\right]}
\newcommand{\PP}[1]{\mathbb{P}\left\{{#1}\right\}}
\newcommand{\PPst}[2]{\mathbb{P}\left\{{#1}\ \middle| \ {#2}\right\}}
\newcommand{\Ppst}[3]{\mathbb{P}_{{#1}}\left\{{#2}\ \middle| \ {#3}\right\}}
\newcommand{\Pp}[2]{\mathbb{P}_{{#1}}\left\{{#2}\right\}}
\newcommand{\eqd}{\stackrel{\textnormal{d}}{=}}
\newcommand{\One}[1]{{\mathbbm{1}}\left\{{#1}\right\}}

 % inline version
\newcommand{\PPin}[1]{\mathbb{P}\{{#1}\}} % inline version
\newcommand{\iidsim}{\stackrel{\textnormal{iid}}{\sim}}

\newcommand\independent{\protect\mathpalette{\protect\independenT}{\perp}}
\def\independenT#1#2{\mathrel{\rlap{$#1#2$}\mkern2mu{#1#2}}}

\newcommand{\Var}{\mathrm{Var}}
\newcommand{\dtv}{\mathrm{d}_{\mathrm{TV}}}
\newcommand{\Quantile}{\mathrm{Quantile}}
\def\Xcal{\mathcal{X}}
\def\Ycal{\mathcal{Y}}
\def\Ccal{\mathcal{C}}
\def\Scal{\mathcal{S}}
\def\Zcal{\mathcal{Z}}
\def\Ucal{\mathcal{U}}
\def\Zbag{\lbag Z \rbag}
\def\zbag{\lbag z \rbag}
\def\thresh{\mathsf{t}}
\def\score{\mathsf{s}}
\def\teststat{\varphi}
\def\alg{\mathcal{A}}
\def\Ptr{F} 
\def\Pte{G} 
\def\d{\mathsf{d}}
\def\id{\mathsf{id}}

\title{Unifying Different Theories of Conformal Prediction}
\author{Rina Foygel Barber\thanks{Department of Statistics, University of
    Chicago} \and Ryan J.\ Tibshirani\thanks{Department of Statistics,
    University of California, Berkeley}} 
\date{}

\begin{document}
\maketitle

\begin{abstract}
This paper establishes a unified framework for understanding the methodology and 
theory behind several different methods in the conformal prediction literature,
which includes standard conformal prediction (CP), weighted conformal prediction
(WCP), nonexchangeable conformal prediction (NexCP), and randomly-localized
conformal prediction (RLCP), among others. At the crux of our framework is the
idea that conformal methods are based on revealing \emph{partial information}
about the data at hand, and positing a conditional distribution for the data
given the partial information. Different methods arise from different choices of
partial information, and of the corresponding (approximate) conditional
distribution. In addition to recovering and unifying existing results, our
framework leads to both new theoretical guarantees for existing methods, and new
extensions of the conformal methodology.
\end{abstract}

\section{Introduction}
\label{sec:intro}

As machine learning algorithms become increasingly embedded in prediction
systems, it has become increasingly important to address a question of
reliability: how can we quantify the uncertainty in the predictions that are
produced by black-box models? Conformal prediction \citep{vovk2005algorithmic}
is a framework for providing predictive inference around the output of a
black-box algorithm, offering a guarantee of predictive coverage that relies 
only on assuming the data points are exchangeable, without placing further
assumptions on the distribution of the data or any assumptions on the algorithm
used for modeling.

To be more precise, conformal prediction works in a setting in which we observe
training data $(X_1,Y_1),\dots,(X_n,Y_n)\in\Xcal\times\Ycal$, where each $X_i$
is a feature (e.g., a vector of covariates) and each $Y_i$ is a response. We
also have a test point $(X_{n+1},Y_{n+1})$, where the feature $X_{n+1}$ is
observed, and our task is to predict the unobserved response $Y_{n+1}$. Many
different methods from statistics and machine learning can be applied to produce
a fitted model $\hat{f}:\Xcal\to\Ycal$ to (often accurately) predict $Y$ from
$X$. How can we build a prediction interval around $\hat{f}(X_{n+1})$ to
quantify our uncertainty about $Y_{n+1}$? To answer this question, conformal
prediction relies on the assumption that the data is exchangeable, i.e., the
assumption that the distribution of $((X_1,Y_1),\dots,(X_{n+1},Y_{n+1}))$ is
invariant to permuting these $n+1$ data points. Section \ref{sec:background}
gives more background on this method.

In recent years, the conformal prediction literature has seen a flurry of
development. Our focus in this paper is on extensions of the conformal
framework which permit relaxations of the core assumption of exchangeability, 
and advances around localization. This includes: 
% Some of this work features new choices for the score function---a key
% component of the conformal framework, or specialized (efficient)
% implementations for certain prediction algorithms. While these developments
% can lead to substantially improved performance in practice, they do not
% represent fundamental modifications of the conformal framework. Other recent  
% developments have extended the framework itself, to handle different
% relaxations of exchangeability. These extensions are the focus of our work,
% and include the following:

\begin{itemize}
\item Weighted conformal prediction \citep{tibshirani2019conformal}, which
  applies likelihood-based weights to accommodate settings where the training 
  and test data come from different distributions, with a known shift. 
  \citet{tibshirani2019conformal} focus on the case of covariate shift, while 
  \citet{podkopaev2021distribution} study the problem of label shift.   

\item Nonexchangeable conformal prediction \citep{barber2023conformal}, which
  applies a different sort of weighting to improve the robustness of the
  prediction sets to mild but unknown violations of the exchangeability
  assumption.         

\item Localized conformal prediction \citep{guan2023localized} and
  randomly-localized conformal prediction \citep{hore2023conformal}, which
  modify the method to place higher weight on data points closer to the test 
  point $X_{n+1}$, in order to aim for better locally adaptivity, i.e.,
  coverage which approximately holds once we condition on $X_{n+1}$.  

\item Generalized weighted conformal prediction \citep{prinster2024conformal},
  which offers a generalized view through the lens of understanding
  distributions of all possible permutations of the data, to handle arbitrary
  forms of nonexchangeability; earlier work by \citet{fannjiang2022conformal}
  treats an important special case arising in experimental design.
\end{itemize}

\paragraph{Our contributions.}

In this paper, we develop a unified theory that brings together these methods 
and their supporting theory. At a high level, this unified view follows a simple
but general recipe: given a dataset $Z = ((X_1,Y_1),\dots,(X_{n+1},Y_{n+1}))$,
we suppose that we observe partial information $U$ about $Z$. The construction
of a conformal prediction set then relies on calculating the (exact or
approximate) distribution of the test point $(X_{n+1},Y_{n+1})$ conditional on
the partial information $U$. For example, in standard conformal prediction, we
define $U$ to be the unordered collection of the data points in $Z$, and the
test point $(X_{n+1},Y_{n+1})$ is equally likely to be any one of the $n+1$ data
points in this collection, due to exchangeability.

We will see that this unified view recreates the existing extensions of standard 
conformal prediction highlighted above, offering a shared theory that explains 
these seemingly disparate generalizations which have been discovered in the 
recent literature. We will also see that the unified view allows us to fluidly
derive new results, ultimately leading to broader applicability of the conformal
prediction framework.       

\paragraph{Additional related work.}

The goal of the current work is to unify several extensions of conformal 
prediction under departures from exchangeability. Meanwhile, various other new 
developments have been recently made in the literature on conformal prediction. 
We survey some of these advances here. 

While conformal prediction can be paired with any modeling algorithm (to make
predictions), and any score function (to measure the predictive model's
accuracy), the utility of the method relies on choosing this pairing
appropriately for the problem at hand. There is by now a rich literature
examining different choices, and we highlight just a few contributions. For
the setting of a real-valued response $Y$, popular options include the residual
score and scaled residual score methods studied by \citet{lei2018distribution},
the quantile score studied by \citet{romano2019conformalized}, a method for
identifying regions of high density by \citet{izbicki2022cd}, and a
nearest-neighbor method studied by \citet{gyorfi2019nearest}. For the setting of
a categorical response, various novel score functions have also been proposed
and studied by \citet{sadinle2019least, romano2020classification,
  angelopoulos2021uncertainty}, among others.

Other lines of work focus on developing methodology and/or theory in different
contexts. We highlight two such lines. First, if the data in fact follows a
specified model (which can be known exactly or approximately), then it is
possible to derive theory which ensures that the conformal prediction method is
competitive with model-based methods; see, e.g., Chapter 5 of
\citet{angelopoulos2024theoretical} for an overview of results of this type.
Second, in the presence of arbitrary (possibly large and discontinuous) and
unknown distribution shift, we can no longer rely on any of the extensions
described above which consider deviations from exchangeability.  In this
setting, the work of \citet{gibbs2021adaptive} proposes an online variant of
conformal prediction that offers a weaker guarantee---coverage is guaranteed on
average over the stream of data. These ideas have been further extended by
\citet{gibbs2024conformal, zaffran2022adaptive, bhatnagar2023improved,
  angelopoulos2023conformal, angelopoulos2024online}, among others.

Finally, we are not the only authors to pursue a unifying framework for
inference which encompasses standard conformal prediction as a special case. A
notable example is the work on online compression models by \citet{vovk2003well,
vovk2023power}. Here a ``summary statistic'' is tracked online, and is updated 
sequentially each time a new data point arrives. This is paired with a
``backward kernel'', which reconstructs the conditional distribution of the data
observed thus far, given the summary statistic. For exchangeable data,
the summary statistic can be set to the bag (unordered multiset of data
points), and this model reproduces conformal prediction in the online setting. 
However, the framework is broader and encapsulates certain Markov or
hypergraphical models as well. 

An online compression model is similar to our framework, as defined below in
Section \ref{sec:unified}, with the summary statistic analogous to our partial
information $U$, and the backward kernel analogous to our conditional
distribution $Q_{Z\mid U}$. Meanwhile, an online compression model is more
restricted than our framework in certain ways, and focused on complementary
aspects. The summary statistic is a deterministic function of the data, and
ideally, one which allows for efficient online updates. Furthermore, the
backward kernel must be exact. In comparison, we do not require $U$ to be a
deterministic function of $Z$ (permitting additional randomness, which is
important for certain applications, such as localized conformal prediction). We
also do not require $Q_{Z\mid U}$ to be correct, developing robustness results
that are at the core of our theory. A reflection on how our work relates to the
broader lanscape of work on conditional inference, in both a classical and
modern sense, is given the discussion section. 

\section{Background}
\label{sec:background}

In this section, we review background material on (standard) conformal
prediction. We also review an alternative formulation of conformal prediction
using the language of hypothesis testing, which will be used to construct our
unified framework in the following sections. For more background on the ideas
explored in this section see, e.g., \citet{vovk2005algorithmic,
  lei2018distribution, angelopoulos2024theoretical}. 

\subsection{Conformal prediction for exchangeable data}
\label{sec:cp_background}

Given training data $(X_1,Y_1),\dots,(X_n,Y_n)\in\Xcal\times\Ycal$, and a test
covariate $X_{n+1}\in\Xcal$, suppose we would like to predict the corresponding
response value $Y_{n+1}\in\Ycal$. Conformal prediction is a framework for
producing a prediction interval, or more generally, a prediction set
$\Ccal(X_{n+1})$ that aims to contain the test response $Y_{n+1}$ with some
prescribed coverage probability $1-\alpha$. To run conformal prediction, we need
to specify a score function:
\[
\score : (\Xcal\times\Ycal) \times (\Xcal\times\Ycal)^{n+1} \to \R,
\]
which compares a data point $(x,y)$ to a given dataset of points
$z_i=(x_i,y_i)$, $i=1,\dots,n+1$. Larger values of the score indicate that
$(x,y)$ does not ``conform'' to the trends observed in the dataset. A canonical
example is any score function of the form
\begin{equation}
\label{eqn:loss_score}
\score\big((x,y),(z_1,\dots,z_{n+1})\big) = \ell(y,\hat{f}(x)), \quad
\text{for $\hat{f} = \alg\big(z_1,\dots,z_{n+1}\big)$},
\end{equation}
where $\ell$ is a loss function on $\Ycal \times \Ycal$, and $\alg$ is a
regression algorithm (e.g., we could use least squares regression in the
real-valued case, $\Ycal = \R$) which inputs a dataset and outputs a fitted 
model $\hat{f} : \Xcal\to\Ycal$.  

Given the score function $\score$, denoting each training point by
$Z_i=(X_i,Y_i)$, we define for arbitrary $y \in \Ycal$ an augmented dataset 
$Z^y = (Z_1,\dots,Z_n,(X_{n+1},y))$, and define scores   
\begin{equation}
\label{eqn:scores}
S^y_i = \score(Z^y_i, Z^y), \quad i \in [n+1].
\end{equation}
where here and throughout we write $[N] = \{1,\dots,N\}$ for an integer $N \geq   
1$. The conformal prediction (CP) set for $Y_{n+1}$ is then given by
\begin{equation}
\label{eqn:conformal_set}
\Ccal(X_{n+1}) = \Big\{ y\in\Ycal : S^y_{n+1} \leq
\Quantile_{1-\alpha}(S^y_1,\dots,S^y_{n+1}) \Big\},
\end{equation}
where $\Quantile_\tau(P) = \inf\{x\in\R: \PP{V \leq x} \geq \tau\}$ denotes the
level-$\tau$ quantile of a random variable $V \sim P$, and we use
$\Quantile_\tau(v_1,\dots,v_N) = \Quantile_\tau(\frac{1}{N} \sum_{i=1}^N
\delta_{v_i})$ to abbreviate the quantile of the empirical distribution
associated with a vector $v\in\R^N$. 

Next we state a well-known, finite-sample guarantee underlying conformal 
prediction. 

\begin{theorem}[{\citealt{vovk2005algorithmic}}]
\label{thm:cp_background}
Suppose $Z_1,\dots,Z_{n+1}$ are exchangeable, and the score function $\score$ is 
\emph{symmetric} in its second argument: 
\begin{equation}
\label{eqn:symmetry}
\score\big((x,y),(z_1,\dots,z_{n+1})\big) =
\score\big((x,y),(z_{\sigma(1)},\dots,z_{\sigma(n+1)})\big), 
\quad \text{for all $\sigma\in\Scal_{n+1}$},  
\end{equation}
where $\Scal_{n+1}$ is the set of permutations on $[n+1]$. Then the 
CP set in \eqref{eqn:conformal_set} satisfies  
\[
\PP{Y_{n+1}\in\Ccal(X_{n+1})} \geq 1-\alpha.
\]
\end{theorem}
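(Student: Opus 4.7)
The plan is to reduce the coverage statement to a standard fact about quantiles of exchangeable random variables, by first ``plugging in'' the true response. Setting $y=Y_{n+1}$ makes the augmented dataset $Z^{Y_{n+1}}$ equal to the true data $(Z_1,\dots,Z_{n+1})$, so the scores $S_i := S^{Y_{n+1}}_i = \score(Z_i,(Z_1,\dots,Z_{n+1}))$ are defined directly from the true data, and the event $\{Y_{n+1}\in\Ccal(X_{n+1})\}$ becomes exactly $\{S_{n+1}\leq \Quantile_{1-\alpha}(S_1,\dots,S_{n+1})\}$. So it suffices to upper bound the probability of the complementary event.

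The first real step is to verify that the random vector $(S_1,\dots,S_{n+1})$ is exchangeable. Fix $\sigma\in\Scal_{n+1}$. By exchangeability of the data, $(Z_1,\dots,Z_{n+1}) \eqd (Z_{\sigma(1)},\dots,Z_{\sigma(n+1)})$. Applying $\score$ to the permuted data, the $i$-th coordinate becomes $\score(Z_{\sigma(i)}, (Z_{\sigma(1)},\dots,Z_{\sigma(n+1)}))$, which by the symmetry assumption \eqref{eqn:symmetry} equals $\score(Z_{\sigma(i)},(Z_1,\dots,Z_{n+1})) = S_{\sigma(i)}$. Hence $(S_1,\dots,S_{n+1}) \eqd (S_{\sigma(1)},\dots,S_{\sigma(n+1)})$, so in particular each coordinate $S_i$ has the same marginal and joint rank distribution as $S_{n+1}$.

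The second step is the standard quantile lemma: for any exchangeable real-valued sequence $V_1,\dots,V_N$,
\[
\PP{V_N > \Quantile_{1-\alpha}(V_1,\dots,V_N)} \le \alpha,
\]
because exchangeability gives $\PP{V_N > q} = \frac{1}{N}\EE{\sum_{i=1}^N \One{V_i > q}}$ where $q = \Quantile_{1-\alpha}(V_1,\dots,V_N)$, and by definition of the empirical quantile, $\frac{1}{N}\sum_i \One{V_i > q} \le \alpha$ almost surely. Applying this to $(S_1,\dots,S_{n+1})$ with $N=n+1$ concludes the proof. The only delicate point, and the main place where care is needed, is handling ties: the quantile must be defined as the $\inf\{x:\mathbb{P}\{V\le x\}\ge 1-\alpha\}$ version (which matches the definition given in the excerpt), so that even when several $S_i$ coincide at the threshold, at most an $\alpha$-fraction of them strictly exceed $q$ and the deterministic bound used above remains valid.
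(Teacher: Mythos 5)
Your proof is correct and follows essentially the same route as the paper's: reduce coverage to the event $S_{n+1}\le \Quantile_{1-\alpha}(S_1,\dots,S_{n+1})$, prove exchangeability of the score vector from the symmetry of $\score$ plus exchangeability of the data, and combine the deterministic quantile bound with an averaging-over-coordinates argument. The only cosmetic difference is that you bound the complementary event $\{S_{n+1}>q\}$ while the paper lower-bounds $\{S_{n+1}\le q\}$, which is the same argument.
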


To take a closer look at this symmetry condition on $\score$, in the example of
the loss-based score function as in \eqref{eqn:loss_score}, this is equivalent
to requiring the algorithm $\alg$ to be symmetric in the training
data---that is, the function $\hat{f} = \alg(z_1,\dots,z_{n+1})$ is unchanged if
we permute the data points $z_1,\dots,z_{n+1}$ in the training set.   

\paragraph{The bag of data.}
 
For a given $z=(z_1,\dots,z_{n+1})$, we write $\zbag = \lbag z_1,\dots,z_{n+1}
\rbag$ to denote the unordered ``bag'' of data, that is, the (unordered)
multiset obtained from the (ordered) vector $z\in\Zcal^{n+1}$, where $\Zcal =
\Xcal \times \Ycal$. For example, if $z=(1,2,1,5)$ then $\zbag$ conveys that
the dataset contains two $1$s, one $2$, and one $5$, but does not specify the
order in which these four values appear. Since Theorem \ref{thm:cp_background}
assumes $\score((x,y),(z_1,\dots,z_{n+1}))$ is invariant to permutations of the
values $z_1,\dots,z_{n+1}$, we can write the score function under this condition
as
\[
\score((x,y), \lbag z_1,\dots,z_{n+1}\rbag),
\]
where we have overloaded notation, in writing the second argument as a bag of
data. 

\paragraph{Validity of CP.}

With the bag notation in place, we next review the proof of Theorem
\ref{thm:cp_background}. Though there are by now several different ways of
writing the proof, the key intuition is as follows: under exchangeability,
conditional on observing the bag $\Zbag = \lbag Z_1,\dots,Z_n,Z_{n+1}\rbag$, the
test point $Z_{n+1}=(X_{n+1},Y_{n+1})$ is equally likely to be any one of the 
$n+1$ elements.

\begin{proof}[Proof of Theorem \ref{thm:cp_background}] 
Write $S_i = \score(Z_i, \lbag Z_1,\dots,Z_{n+1}\rbag)$, for $i \in
[n+1]$. Recalling \eqref{eqn:scores}, we can observe that $S_i = S_i^{Y_{n+1}}$
(taking $y=Y_{n+1}$). Hence, by construction of the CP set
\eqref{eqn:conformal_set},        
\[
Y_{n+1}\in\Ccal(X_{n+1}) \iff S_{n+1} \leq
\Quantile_{1-\alpha}(S_1,\dots,S_{n+1}), 
\]
thus we need to prove that $\PP{S_{n+1} \leq \Quantile_{1-\alpha}
  (S_1,\dots,S_{n+1})} \geq 1-\alpha$.  By definition of the quantile, we must
have $\frac{1}{n+1} \sum_{i=1}^{n+1} \One{S_i\leq \Quantile_{1-\alpha} 
  (S_1,\dots,S_{n+1})} \geq 1-\alpha$, and so  
\[
\frac{1}{n+1} \sum_{i=1}^{n+1} \PP{S_i\leq
  \Quantile_{1-\alpha}(S_1,\dots,S_{n+1})} \geq 1-\alpha,
\] 
after taking an expectation. Our last step, then, is to verify that for each $i
\in [n+1]$,
\[
\PP{S_{n+1} \leq \Quantile_{1-\alpha}(S_1,\dots,S_{n+1})} = \PP{S_i \leq
  \Quantile_{1-\alpha}(S_1,\dots,S_{n+1})},
\] 
We will prove this by showing that the vector of scores is exchangeable, i.e.,
for all $\sigma\in\Scal_{n+1}$, 
\[
(S_1,\dots,S_{n+1}) \eqd (S_{\sigma(1)},\dots,S_{\sigma(n+1)}).
\]
This is a consequence of the exchangeability of the data, together with the
assumption of symmetry of $\score$. To be more precise, define a function
$f:\Zcal^{n+1}\to\R^{n+1}$ by
\[
f(z) = \big(\score(z_1,\zbag),\dots,\score(z_{n+1},\zbag)\big).
\]
Because the bag of data does not change when the data points are permuted, we
can observe that $f$ commutes with permutations---that is, for any $\sigma \in
\Scal_{n+1}$,  
\[
(s_1,\dots,s_{n+1}) = f(z_1,\dots,z_{n+1}) \iff
(s_{\sigma(1)},\dots,s_{\sigma(n+1)}) = f(z_{\sigma(1)},\dots,z_{\sigma(n+1)}).
\] 
We therefore have for any $\sigma \in \Scal_{n+1}$,
\[
(S_1,\dots,S_{n+1}) = f(Z_1,\dots,Z_{n+1}) \eqd
f(Z_{\sigma(1)},\dots,Z_{\sigma(n+1)}) = (S_{\sigma(1)},\dots,S_{\sigma(n+1)}), 
\] 
where the first step holds by definition of the scores, the second step holds by
exchangeability of the data, and the last step holds since $f$ commutes with
permutations. 
\end{proof}

\subsection{Reframing conformal via hypothesis testing}
\label{sec:conformal_pvalue}

CP provides inference in the form of a prediction set, but the idea can be
equivalently cast in the language of p-values and hypothesis testing. To develop
this equivalence, we define what is known as a conformal p-value: for arbitrary
$y\in\Ycal$, this is    
\begin{equation}
\label{eqn:conformal_pvalue}
p(y) = \frac{\sum_{i=1}^{n+1} \One{S^y_i \geq S^y_{n+1}}}{n+1},
\end{equation}
for $S^y_1,\dots,S^y_{n+1}$ as defined in \eqref{eqn:scores}. Informally, $p(y)$
is often viewed as a p-value for testing the hypothesis $Y_{n+1}=y$, given
exchangeable $(X_1,Y_1),\dots,(X_n,Y_n),(X_{n+1},Y_{n+1})$. The following result 
explains the connection between conformal p-values and prediction sets. 

\begin{proposition}[\citealt{vovk2005algorithmic}]
\label{prop:conformal_pvalue}
The conformal prediction set $\Ccal(X_{n+1})$ defined in
\eqref{eqn:conformal_set} can be written in terms of the conformal p-value
$p(y)$ defined in \eqref{eqn:conformal_pvalue}, as  
\[
\Ccal(X_{n+1}) = \big\{ y\in\Ycal: p(y)>\alpha \big\}.
\]
\end{proposition}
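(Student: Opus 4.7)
The plan is to fix an arbitrary $y \in \Ycal$ and show that the two membership criteria---$S^y_{n+1} \leq \Quantile_{1-\alpha}(S^y_1,\dots,S^y_{n+1})$ on the $\Ccal(X_{n+1})$ side, and $p(y) > \alpha$ on the p-value side---are equivalent. Both conditions depend on $y$ only through the vector of scores $(S^y_1,\dots,S^y_{n+1})$, so I would abbreviate $s_i = S^y_i$ and treat the equivalence as a deterministic statement about an arbitrary tuple of $n+1$ real numbers.

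The first step is to rewrite the quantile combinatorially. Applying the definition $\Quantile_\tau(P) = \inf\{x : \PP{V \leq x} \geq \tau\}$ to the empirical distribution of the $s_i$'s, the condition inside the infimum becomes $|\{i \in [n+1] : s_i \leq x\}| \geq \lceil (1-\alpha)(n+1)\rceil$, since the left-hand side is integer-valued. Setting $k = \lceil (1-\alpha)(n+1)\rceil$, this identifies $q := \Quantile_{1-\alpha}(s_1,\dots,s_{n+1})$ as the $k$-th order statistic $s_{(k)}$, and the identification handles ties transparently. I would then translate $s_{n+1} \leq s_{(k)}$ into a count: it holds if and only if at most $k-1$ of the $s_i$'s lie strictly below $s_{n+1}$, or equivalently, $|\{i \in [n+1] : s_i \geq s_{n+1}\}| \geq n+2-k$.

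The conclusion is then arithmetic. Using the identity $\lceil (1-\alpha)(n+1)\rceil = (n+1) - \lfloor \alpha(n+1)\rfloor$, the count condition simplifies to $(n+1)\,p(y) = |\{i : s_i \geq s_{n+1}\}| \geq 1 + \lfloor \alpha(n+1)\rfloor$. Since $(n+1)\,p(y)$ is a nonnegative integer, this is equivalent to $(n+1)\,p(y) > \alpha(n+1)$, i.e.\ $p(y) > \alpha$, yielding the desired equality of sets. The only places calling for care are the identification $q = s_{(k)}$ in the presence of ties and the floor/ceiling identity reconciling the non-strict count ``$\geq 1 + \lfloor\alpha(n+1)\rfloor$'' with the strict inequality ``$> \alpha$''; nothing deeper is needed.
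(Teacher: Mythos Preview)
Your argument is correct and follows essentially the same route as the paper: both reduce the claim to the equivalence, for a finitely supported distribution $Q$, between $x \leq \Quantile_{1-\alpha}(Q)$ and $\Pp{Q}{X \geq x} > \alpha$. The paper isolates this as Lemma~\ref{lem:pvalue_vs_quantile} and applies it to the empirical distribution of the scores, whereas you unpack the same equivalence directly via order statistics and the floor/ceiling identity $\lceil(1-\alpha)(n+1)\rceil = (n+1) - \lfloor\alpha(n+1)\rfloor$; the logical content is the same.
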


In order to prove the proposition, we need a result relating p-values and
quantiles. Its proof is given Appendix \ref{app:pvalue_vs_quantile}.

\begin{lemma}
\label{lem:pvalue_vs_quantile}
Let $Q$ be a distribution on $\R$, which is supported on finitely many
values. Then for any $\alpha\in[0,1]$ and $x\in\R$, 
\[ 
\Pp{Q}{X\geq x} > \alpha \iff x \leq \Quantile_{1-\alpha}(Q).
\]
\end{lemma}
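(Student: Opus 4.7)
My plan is to translate the equivalence into one about the CDF. Write $F(t) = \Pp{Q}{X \leq t}$ and set $q = \Quantile_{1-\alpha}(Q) = \inf\{t \in \R : F(t) \geq 1-\alpha\}$. Since $\Pp{Q}{X \geq x} = 1 - \Pp{Q}{X < x} = 1 - F(x^-)$, where $F(x^-)$ denotes the left limit of $F$ at $x$, the claim reduces to showing $F(x^-) < 1-\alpha \iff x \leq q$.

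For the $(\Leftarrow)$ direction I would prove the contrapositive: if $x > q$, pick any $t$ with $q \leq t < x$; by the defining property of $q$, $F(t) \geq 1-\alpha$, and monotonicity of $F$ forces $F(x^-) \geq F(t) \geq 1-\alpha$. For the $(\Rightarrow)$ direction, suppose $x \leq q$. Then every $t < x$ also satisfies $t < q$, so the infimum definition of $q$ gives $F(t) < 1-\alpha$ for all such $t$. Here the finite-support hypothesis enters: since $F$ takes only finitely many distinct values, the supremum $F(x^-) = \sup_{t<x} F(t)$ is attained, namely as $F(t_0)$ for the largest support point $t_0 < x$ (or $0$ if no such point exists). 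An attained supremum of numbers each strictly less than $1-\alpha$ is itself strictly less than $1-\alpha$, which is exactly what we need.

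The only real obstacle is this final step --- preserving the \emph{strict} inequality $F(x^-) < 1-\alpha$ when passing from ``$F(t) < 1-\alpha$ for every $t < x$'' to the left limit. Without the finite-support assumption, one could only conclude $F(x^-) \leq 1-\alpha$ in general (for instance, if $x$ were a limit of support points at which $F$ approached $1-\alpha$ from below), which would yield $\Pp{Q}{X \geq x} \geq \alpha$ rather than the required strict inequality. The finite-support hypothesis bypasses this continuity nuisance cleanly, so no further regularity on $Q$ is needed.
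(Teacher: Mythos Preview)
Your proof is correct. The approach is essentially equivalent to the paper's, though the presentation differs: the paper explicitly enumerates the support points $x_1<\dots<x_m$ with masses $p_i$, identifies $\Quantile_{1-\alpha}(Q)=x_{k^*}$ for $k^*=\min\{k:\sum_{i\le k}p_i\ge 1-\alpha\}$, and then checks the two cases $x\le x_{k^*}$ and $x>x_{k^*}$ by direct sums. You instead work abstractly with the CDF and its left limit, invoking the finite-support hypothesis only at the final step to argue that $\sup_{t<x}F(t)$ is attained and hence remains strictly below $1-\alpha$. Both arguments are the same computation in different clothing; your version has the minor advantage of isolating precisely where (and why) the finite-support assumption is needed, while the paper's enumeration is perhaps more immediately transparent. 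One small tightening: in your $(\Leftarrow)$ contrapositive, when you pick $t$ with $q\le t<x$ and claim $F(t)\ge 1-\alpha$, it is cleanest to take $t$ with $q<t<x$ strictly, so that the conclusion follows from monotonicity and the infimum definition alone (otherwise you are implicitly using right-continuity of $F$ at $t=q$, which is of course true here but not stated).
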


\smallskip
\begin{proof}[Proof of Proposition \ref{prop:conformal_pvalue}]
By definition of the conformal prediction set, we have
\[
y\in\Ccal(X_{n+1}) \iff S^y_{n+1} \leq \Quantile_{1-\alpha}\bigg(\frac{1}{n+1}
\sum_{i=1}^{n+1}\delta_{S^y_i}\bigg). 
\]   
where recall $\frac{1}{n+1} \sum_{i=1}^{n+1}\delta_{S^y_i}$ is the empirical
distribution of the scores $S^y_1,\dots,S^y_{n+1}$. Applying Lemma
\ref{lem:pvalue_vs_quantile}, we have that for any $s\in\R$,  
\[
s \leq \Quantile_{1-\alpha}\bigg(\frac{1}{n+1}
\sum_{i=1}^{n+1} \delta_{S^y_i}\bigg) \iff \frac{1}{n+1}
\sum_{i=1}^{n+1} \One{S^y_i \geq s} > \alpha.
\]   
Taking $s=S^y_{n+1}$, and combining with the calculation above, we have
\[ 
y\in\Ccal(X_{n+1}) \iff \frac{1}{n+1} \sum_{i=1}^{n+1} \One{S^y_i \geq
  S^y_{n+1}} > \alpha,
\] 
which by definition of $p(y)$ in \eqref{eqn:conformal_pvalue}, completes the
proof. 
\end{proof}

The coverage result in Theorem \ref{thm:cp_background} can therefore be
translated into the following statement about the conformal p-value.

\begin{corollary}[\citealt{vovk2005algorithmic}]
\label{cor:conformal_pvalue}
If $Z_1,\dots,Z_{n+1}$ are exchangeable, and the score function is symmetric as
in \eqref{eqn:symmetry}, then the conformal p-value defined
in \eqref{eqn:conformal_pvalue} satisfies 
\[
\PP{p(Y_{n+1})\leq\alpha} \leq \alpha, \quad \text{for all $\alpha \in [0,1]$}. 
\]
\end{corollary}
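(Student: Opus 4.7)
The plan is to leverage the two results already proven in this subsection---Proposition~\ref{prop:conformal_pvalue}, which identifies the prediction set with the super-level set $\{y : p(y) > \alpha\}$, and Theorem~\ref{thm:cp_background}, which gives the coverage guarantee for the CP set---to translate coverage of $\Ccal(X_{n+1})$ into a bound on the left tail of $p(Y_{n+1})$. No new combinatorial or probabilistic work is needed; the entire proof should be essentially a chain of equivalences followed by one application of the known coverage guarantee.

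First, I would apply Proposition~\ref{prop:conformal_pvalue} with the choice $y = Y_{n+1}$ to obtain the equivalence
\[
Y_{n+1} \in \Ccal(X_{n+1}) \iff p(Y_{n+1}) > \alpha.
\]
Taking complements of the two events, this is equivalent to
\[
Y_{n+1} \notin \Ccal(X_{n+1}) \iff p(Y_{n+1}) \leq \alpha,
\]
so in particular $\PP{p(Y_{n+1}) \leq \alpha} = 1 - \PP{Y_{n+1} \in \Ccal(X_{n+1})}$.

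Next, I would invoke Theorem~\ref{thm:cp_background}, whose hypotheses (exchangeability of $Z_1,\dots,Z_{n+1}$ and symmetry of $\score$) are exactly the hypotheses of the corollary. That theorem yields $\PP{Y_{n+1} \in \Ccal(X_{n+1})} \geq 1 - \alpha$. Substituting into the equality above gives
\[
\PP{p(Y_{n+1}) \leq \alpha} = 1 - \PP{Y_{n+1} \in \Ccal(X_{n+1})} \leq 1 - (1 - \alpha) = \alpha,
\]
which is the desired conclusion for an arbitrary $\alpha \in [0,1]$.

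There is no real obstacle here: the argument is a direct reformulation made possible by Proposition~\ref{prop:conformal_pvalue}. The one point worth flagging is that Proposition~\ref{prop:conformal_pvalue} is stated pointwise in $y$, so one should be explicit that it may be applied with $y$ replaced by the random variable $Y_{n+1}$ (since the equivalence holds for every fixed $y$, it holds on the event where $y$ is plugged in as $Y_{n+1}$). Once that is noted, the rest is a one-line computation.
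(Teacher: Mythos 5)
Your argument is correct and coincides with the paper's own (implicit) proof: the paper presents Corollary \ref{cor:conformal_pvalue} precisely as the translation of Theorem \ref{thm:cp_background} through the set identity $\Ccal(X_{n+1}) = \{y : p(y) > \alpha\}$ of Proposition \ref{prop:conformal_pvalue}, which is exactly your chain of equivalences. Your remark about substituting the random $Y_{n+1}$ into the pointwise identity is a fine, if minor, clarification.
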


One way to interpret Proposition \ref{prop:conformal_pvalue} and Corollary
\ref{cor:conformal_pvalue} is that the conformal prediction set is effectively
given by inverting a permutation test, leveraging the fact that all $(n+1)!$
orderings of the dataset $Z=(Z_1,\dots,Z_{n+1})$ are equally likely (due to
exchangeability). To make this more explicit, observe that the conformal p-value
in \eqref{eqn:conformal_pvalue} is equivalently
\[
p(y) = \frac{\sum_{\sigma\in\Scal_{n+1}} \One{\score(Z^y_{\sigma(n+1)},
    Z^y_\sigma) \geq \score(Z^y_{n+1}, Z^y)}}{(n+1)!},
\]
where recall $Z^y = (Z_1,\dots,Z_n,(X_{n+1},y))$, and we use \smash{$Z^y_\sigma
  = (Z^y_{\sigma(1)},\dots,Z^y_{\sigma(n+1)})$} for the result of permuting its 
entries under $\sigma$. If we think of $\teststat(Z^y) = \score(Z^y_{n+1}, Z^y)$
as a test statistic of the dataset $Z^y$, then the above reveals that $p(y)$ is
precisely a permutation p-value based on $\teststat$: 
\[
p(y) = \frac{\sum_{\sigma\in\Scal_{n+1}} \One{\teststat(Z^y_\sigma) \geq
    \teststat(Z^y)}}{(n+1)!}.
\]
The conformal prediction guarantee now becomes familiar: by the classical theory
of permutation testing, we immediately know that $p(Y_{n+1})$ is a valid p-value
since $Z^{Y_{n+1}} = Z$ has an exchangeable distribution.     

We note that the connections between conformal prediction and hypothesis testing
extend beyond standard CP to its many variants, such as WCP and NexCP. While
these and other extensions are typically directly via prediction sets, they can
also be represented through the language of conformal p-values. Our unified
framework (Sections \ref{sec:unified} and \ref{sec:unified_ext}) will also use
the language of hypothesis testing, to allow for a simple and clean
exposition. When we apply the unified framework to derive specific variants of
CP (Sections \ref{sec:cases_known} and \ref{sec:cases_new}), we will verify the
equivalence of its original formulation and p-value representation, in each
case.

\section{A unified framework}
\label{sec:unified}

We are now ready to build our unified framework for conformal prediction
methods. Recall that, in the standard exchangeable setting, the premise of
conformal prediction is that after conditioning on the unordered bag of data
$\Zbag$ we know the distribution of $Z$---it is simply uniform over all possible
permutations of the elements in the bag. For our unified framework, we will
generalize this idea in two ways: first, we will allow for settings that are
more general than exchangeability, and second, we will allow for conditioning on
more information than the bag of data $\Zbag$. To begin, we will need the
following two ingredients:
\begin{itemize}
\item \emph{Partial information} about the data, encoded by a random variable
  $U\in\Ucal$ on which we will condition to perform inference. We will assume
  that $U$ always contains sufficient information to reveal the unordered bag of
  data $\Zbag$, that is,  
  \[
  \Zbag = h(U) \;\, \text{almost surely},
  \]
  for some function $h$. In many cases, we will simply have $U=\Zbag$, but in
  others $U$ will contain additional information. Further, $U$ may be equal to
  a deterministic function of $Z$ (such as $U=\Zbag$), or may contain auxiliary 
  sources of randomness.      

\item A \emph{score function} $\score : \Zcal^{n+1}\times\Ucal\to\R$. As in
  standard CP, the value of $\score(Z,U)$ is intended to reflect the
  nonconformity of the last data point $Z_{n+1}$, relative to the observed
  data. That is, a large value of $\score(Z,U)$ indicates that $Z_{n+1}$ is
  likely an outlier.
\end{itemize}

We next need to determine the distribution of the statistic $\score(Z,U)$, in
order to compute a p-value. Essentially, we proceed by approximating the
conditional distribution of $Z \mid U$, and then computing a p-value for the
observed statistic $\score(Z,U)$ using this conditional distribution. This
requires a third ingredient: 
\begin{itemize}
\item An \emph{approximation} $Q_{Z\mid U}$ of the conditional distribution of 
  $Z\mid U$. Notice that the true conditional distribution of $Z\mid U$ is
  supported on the finite set $\{z\in\Zcal^{n+1} : \zbag = h(U)\}$ (as
  $\Zbag=h(U)$, almost surely). We assume $Q_{Z\mid U}$ also has support
  contained in this set. 
\end{itemize}

With these ingredients in place, we then define the p-value
\begin{equation}
\label{eqn:unified_pvalue}
p = p(Z,U) \quad \text{where} \quad p(z,u) = \Pp{Q_{Z \mid  U=u}}{\score(Z,u)
  \geq \score(z,u)}.  
\end{equation}

As explained in Section \ref{sec:conformal_pvalue}, conformal prediction methods 
can equivalently be written in terms of conformal p-values, thus predictive
coverage guarantees for $\PP{Y_{n+1}\in\Ccal(X_{n+1})}$ can be obtained by
bounding $\PP{p\leq \alpha}$ for the p-value constructed in
\eqref{eqn:unified_pvalue}.   

\subsection{Main theorem}

In order to examine the event $p\leq \alpha$, it will be useful to define the
quantity
\[
\thresh_\alpha(u) = \Quantile_{1-\alpha}(Q_{S\mid U=u}),
\]
where $Q_{S\mid U=u}$ denotes the conditional distribution of the score
$\score(Z,u)$ for $Z\sim Q_{Z\mid U=u}$. By Lemma \ref{lem:pvalue_vs_quantile},
since $Q_{S\mid U=u}$ is a distribution with finite support, it holds that
\begin{equation}
\label{eqn:pvalue_vs_quantile}
p(z,u) \leq \alpha \iff \score(z,u) > \thresh_\alpha(u).
\end{equation}
We now have the following guarantee for the p-value from the unified
framework. Here and henceforth, $\dtv(\cdot,\cdot)$ denotes the total  
variation (TV) distance between two distributions.

\begin{theorem}
\label{thm:unified}
Suppose $(Z,U) \sim P_{Z,U}$. Let $P_{S,T}$ be the induced joint distribution on
$(S,T)=(\score(Z,U),\thresh_\alpha(U))$. Then the p-value defined in
\eqref{eqn:unified_pvalue} satisfies   
\[
\PP{p\leq \alpha}\leq \alpha + \inf_{Q_U} \, \dtv(P_{S,T},Q_{S,T}),
\]
where the infimum is taken over all distributions $Q_U$ on $U$, and where
$Q_{S,T}$ denotes the joint distribution of
$(S,T)=(\score(Z,U),\thresh_\alpha(U))$ induced by drawing $(Z,U) \sim Q_{Z,U} =
Q_{Z\mid U}\times Q_U$.   
\end{theorem}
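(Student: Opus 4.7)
The plan is to reduce everything to the simple event $\{S > T\}$ under two different joint laws for $(S,T)$, use the quantile property of $T$ to control the probability of this event under the ``oracle'' law $Q_{S,T}$, and finally transfer the bound to $P_{S,T}$ via total variation.

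First I would use the equivalence \eqref{eqn:pvalue_vs_quantile} to rewrite
\[
\PP{p \leq \alpha} = \PP{\score(Z,U) > \thresh_\alpha(U)} = \Pp{P_{S,T}}{S > T}.
\]
Next, for an arbitrary candidate distribution $Q_U$ on $U$, form the joint distribution $Q_{Z,U} = Q_{Z \mid U} \times Q_U$ and let $Q_{S,T}$ denote the induced joint law of $(\score(Z,U),\thresh_\alpha(U))$. The key observation is that, under $Q_{Z,U}$, the conditional law of $S = \score(Z,U)$ given $U = u$ is exactly $Q_{S \mid U=u}$ by construction, and $T = \thresh_\alpha(u)$ is, by definition, the $(1-\alpha)$-quantile of that very distribution. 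Since $Q_{S \mid U=u}$ is finitely supported, the infimum in the quantile definition is attained, so $\Pp{Q_{S \mid U=u}}{S \leq \thresh_\alpha(u)} \geq 1 - \alpha$ and hence $\Pp{Q_{S \mid U=u}}{S > \thresh_\alpha(u)} \leq \alpha$. Integrating over $u \sim Q_U$ gives $\Pp{Q_{S,T}}{S > T} \leq \alpha$.

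To finish, I would invoke the standard total variation inequality: for any measurable event $A$ and any two distributions $P,Q$ on the same space, $P(A) \leq Q(A) + \dtv(P,Q)$. Applied to $A = \{S > T\}$ with $P = P_{S,T}$ and $Q = Q_{S,T}$, this gives
\[
\PP{p \leq \alpha} = \Pp{P_{S,T}}{S > T} \leq \Pp{Q_{S,T}}{S > T} + \dtv(P_{S,T}, Q_{S,T}) \leq \alpha + \dtv(P_{S,T}, Q_{S,T}).
\]
Since $Q_U$ was arbitrary, taking the infimum over all $Q_U$ yields the claimed bound.

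There is no real obstacle here; the proof is essentially a three-line calculation once the ingredients are lined up. The only subtlety worth double-checking is the quantile step: one must use the strict inequality $S > T$ (not $S \geq T$) so that the finite-support quantile definition cleanly gives a bound of $\alpha$ rather than something larger. This is precisely the form in which \eqref{eqn:pvalue_vs_quantile} (via Lemma \ref{lem:pvalue_vs_quantile}) delivers the equivalence, so the proof threads through cleanly.
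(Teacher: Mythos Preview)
Your proof is correct and follows essentially the same route as the paper's: both reduce $\{p\le\alpha\}$ to $\{S>T\}$ via \eqref{eqn:pvalue_vs_quantile}, bound $\Pp{Q_{S,T}}{S>T}\le\alpha$ by conditioning on $U$ and using the defining property of the $(1-\alpha)$-quantile (the paper phrases this step as p-value superuniformity \eqref{eqn:pvalue_superuniform}, but it is the same fact), and then transfer to $P_{S,T}$ by the total variation inequality before taking the infimum over $Q_U$.
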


Before giving the proof, we pause to comment on some alternative versions of
this bound that are implied by the theorem. 

\begin{remark}
\label{rmk:information_monotonicity}
As $(S,T) = (\score(Z,U),\thresh_\alpha(U))$, we must have
$\dtv(P_{S,T},Q_{S,T})\leq \dtv(P_{Z,U},Q_{Z,U})$. This follows from a standard 
property of the total variation distance, which we will refer to as
\emph{information monotonicity}: 
\[
\text{$\dtv(P_{f(W)},Q_{f(W)})\leq \dtv(P_W,Q_W)$ for any distributions
  $P_W,Q_W$ and any function $f$.} 
\]
(Here $P_{f(W)}$ denotes the distribution of $f(W)$ for $W\sim P$,
and similarly for $Q_{f(W)}$.) Consequently, Theorem \ref{thm:unified} implies
a weaker bound, 
\[
\PP{p\leq \alpha}\leq \alpha + \inf_{Q_U} \, \dtv(P_{Z,U},Q_{Z,U}).
\]
The same argument holds more generally. For example, since $(S,T)$ is a function 
of $(S,U)$, we also have $\dtv(P_{S,T},Q_{S,T})\leq \dtv(P_{S,U},Q_{S,U})$. In
the applications examined later, we will use various versions of this type of
weaker bound, as convenient for each example.  
\end{remark}

\begin{remark}
\label{rmk:exact_validity}
Let $P_{Z\mid U}$ denote the conditional distribution associated with the joint
$P_{Z,U}$. If $Q_{Z\mid U} = P_{Z\mid U}$ holds almost surely, i.e., our choice
for the conditional distribution of $Z \mid U$ is exactly correct in
implementing the unified conformal method, then by taking $Q_U=P_U$, we obtain
$Q_{Z,U}=P_{Z,U}$ and therefore $\dtv(P_{Z,U},Q_{Z,U})=0$ which leads to
\[
\PP{p\leq \alpha}\leq \alpha.
\]
\end{remark}

\subsection{Proof of Theorem \ref{thm:unified}}

To build up toward the proof of the unified result, it is helpful to first
recall some standard properties of hypothesis testing. Suppose that we observe
data $Z$, and we have a prespecified test statistic---a function $\teststat$
which maps data $Z$ to a value $\teststat(Z)\in\R$ (with the convention that 
higher values indicate more evidence against the null). Fixing a null
hypothesis $H_0: Z\sim Q$ for a distribution $Q$, we may compute a
p-value $p(Z)$, where $p(z) = \Pp{Q}{\teststat(Z) \geq \teststat(z)}$ is the
probability of observing a statistic at least as large as $\phi(z)$ under the
null (for a one-sided test). By construction, the following holds: 
\begin{equation}
\label{eqn:pvalue_superuniform}
\text{if $Z\sim Q$ then} \;\, \PP{p(Z) \leq \alpha}\leq \alpha,
\end{equation}
for all $\alpha\in[0,1]$. But it can also be of interest to study the behavior
of the p-value $p$ under a different distribution, and by
\eqref{eqn:pvalue_superuniform} along with the definition of TV distance, we
have    
\begin{equation}
\label{eqn:pvalue_dtv_bound}
\text{if $Z\sim P$ then} \;\, \PP{p(Z)\leq \alpha}\leq \alpha + \dtv(P,Q).
\end{equation}
The above fact is often interpreted as a statement about power, i.e., if $P$ and
$Q$ are close in TV distance, then our test cannot have high power for rejecting
$Q$ in favor of the alternative $P$. However, we can also view
\eqref{eqn:pvalue_dtv_bound} as a statement about robustness to model
misspecification: if the null hypothesis of interest is $H_0: Z\sim P$,
and we only have access to an approximation $Q$ of this null distribution, then
the test may have inflated Type I error---but this inflation cannot be larger
than the total variation distance $\dtv(P,Q)$. 

With this intuition in place, we are now ready to prove the theorem.
At a high level, the proof can be viewed as a conditional version of the
standard bound \eqref{eqn:pvalue_dtv_bound} above. 

\begin{proof}[Proof of Theorem \ref{thm:unified}] 
By the standard fact \eqref{eqn:pvalue_superuniform} about p-values (applied
with $Q_{Z \mid U=u}$ in place of $Q$, and the score function $\score(\cdot,u)$
in place of the test statistic $\teststat$), we have  
\[
\Pp{Q_{Z \mid U=u}}{p(Z,u)\leq \alpha} \leq \alpha.
\]
As this holds for each $u\in\Ucal$, by averaging over $U\sim Q_U$, for any
marginal distribution $Q_U$, 
\[
\Pp{Q_{Z,U}}{p(Z,U)\leq \alpha} \leq \alpha. 
\]
Therefore, by definition of $Q_{S,T}$, along with the equivalence
\eqref{eqn:pvalue_vs_quantile} relating p-values to thresholds,
\[
\Pp{Q_{S,T}}{S>T} = \Pp{Q_{Z,U}}{\score(Z,U)> \thresh_\alpha(U)}
= \Pp{Q_{Z,U}}{p(Z,U)\leq \alpha} \leq \alpha.
\]
Finally, observe that by \eqref{eqn:pvalue_vs_quantile} once again,
\begin{align*}
\Pp{P_{Z,U}}{p(Z,U)\leq \alpha}
&=\Pp{P_{Z,U}}{\score(Z,U)> \thresh_\alpha(U)} \\
&=\Pp{P_{S,T}}{S> T} \\
&\leq \Pp{Q_{S,T}}{S> T} + \dtv(P_{S,T},Q_{S,T})\\
&\leq \alpha + \dtv(P_{S,T},Q_{S,T}),
\end{align*}
where the next-to-last step holds by definition of total variation distance.
\end{proof}

\section{Special cases: known results}
\label{sec:cases_known}

In this section, we examine a range of special cases in which the unified
framework is able to reproduce known results in the literature. In
each subsection below, we review the setting underlying a particular conformal
method, the definition of the method itself, the associated theory, and then
demonstrate how this can be viewed from the lens of the unified framework. 

\subsection{Standard conformal prediction}

Standard conformal prediction (CP) \citep{vovk2005algorithmic} works in the
setting where the data $Z_1,\dots,Z_{n+1}$ are exchangeable. This method was
already described above in Section \ref{sec:cp_background}, but we briefly
review it again here for completeness before describing its reformulation under  
the unified framework.

\subsubsection{Method and theory}

Fix any score function of the form $\score((x,y),\zbag)$, which assigns a value
to a single data point (its first argument), based on a bag of data points (its
second argument). For arbitrary $y \in \Ycal$, we define the augmented dataset
$Z^y = (Z_1,\dots,Z_n,(X_{n+1},y))$, and scores   
\[
S^y_i = \score(Z^y_i, \lbag Z^y \rbag), \quad i\in[n+1].
\] 
We define the CP set at coverage level $1-\alpha$ by 
\begin{equation}
\label{eqn:cp}
\Ccal(X_{n+1}) = \bigg\{ y \in \Ycal: S^y_{n+1} \leq \Quantile_{1-\alpha}
\bigg(\frac{1}{n+1} \sum_{i=1}^{n+1} \delta_{S^y_i} \bigg) \bigg\}. 
\end{equation}
Recalling Theorem \ref{thm:cp_background}, if the data points
$Z_1,\dots,Z_{n+1}$ are exchangeable, then CP provides a marginal coverage 
guarantee $\PP{Y_{n+1}\in\Ccal(X_{n+1})} \geq 1-\alpha$.   

\subsubsection{View from the unified framework}

We now describe how CP fits into the unified conformal framework.

\paragraph{Choices of $U$ and $Q_{Z \mid U}$.}

We define the partial information as $U = \Zbag$. In standard CP, given the bag 
$u=\zbag$, the score assigned to $z$ depends only on $z_{n+1}$. Overloading
notation, we can thus write the score function as
\[
\score(z,u) = \score(z_{n+1},\zbag).
\]
Next we take the choice of the conditional distribution $Q_{Z\mid U}$ to be
\[
Q_{Z \mid U=\zbag} = \frac{1}{(n+1)!} \sum_{\sigma \in \Scal_{n+1}}
\delta_{z_\sigma}.
\]
This is the uniform distribution over all permutations of $z$, i.e., all vectors
consistent with the observed bag $\zbag$ of data points.

\paragraph{P-value.}

Under these choices, the p-value in \eqref{eqn:unified_pvalue} is  
\begin{align*}
p &= \frac{1}{(n+1)!} \sum_{\sigma \in \Scal_{n+1}}
  \One{\score(Z_{\sigma(n+1)},\Zbag) \geq \score (Z_{n+1},\Zbag)} \\ 
&= \frac{1}{n+1} \sum_{i=1}^{n+1} \One{\score(Z_i,\Zbag) \geq \score
  (Z_{n+1},\Zbag)}, 
\end{align*}
where the second line follows from the fact that, for each $i$, there are $n!$
many permutations $\sigma\in\Scal_{n+1}$ with $\sigma(n+1)=i$. As we can see,
this is the conformal p-value $p(Y_{n+1})$ defined in
\eqref{eqn:conformal_pvalue}. By Proposition \ref{prop:conformal_pvalue}, the 
event $p>\alpha$ is equivalent to $Y_{n+1}\in\Ccal(X_{n+1})$, or, in other
words, bounding $\PP{p\leq \alpha}$ (as we will do next via the unified theory)
is equivalent to providing a predictive coverage guarantee for CP.

\paragraph{Validity.}

We present an alternative derivation of the standard conformal prediction theory
in Theorem \ref{thm:cp_background}, based on the unified result in Theorem
\ref{thm:unified}. 

\begin{proof}[Proof of Theorem \ref{thm:cp_background} via the unified
  framework]  
Recall that $U=\Zbag$. As have assumed the data is exchangeable, the true
conditional distribution is given by  
\[
P_{Z\mid U=\zbag} = \frac{1}{(n+1)!}\sum_{\sigma\in\Scal_{n+1}}
\delta_{z_\sigma},
\]
i.e., after observing the unordered bag of data $\Zbag = \zbag$, each
permutation of $z$ is equally likely. Since $P_{Z\mid U}=Q_{Z\mid U}$, this
proves (recalling Remark \ref{rmk:exact_validity}) that $\PP{p\leq \alpha}\leq
\alpha$.  
\end{proof}

This template---providing a predictive coverage guarantee by bounding the Type I 
error of the associated p-value---is used in all variants of conformal
prediction in this section. 

\subsection{Split conformal prediction}

Split conformal prediction is a widely-used variant of standard conformal
prediction where a data split is used in order to facilitate computation. Split
CP was proposed by \citet{papadopoulos2002inductive} and studied in
\citet{lei2018distribution}, among many others. The problem setting that we
consider here is precisely as in the last subsection: we assume exchangeable 
$Z_1,\dots,Z_{n+1}$. (We note that split variants also exist for all conformal
methods that will be discussed in the coming subsections, but for simplicity, we
only study it as a variant of standard CP.)  

\subsubsection{Method and theory}

We first partition the dataset $(Z_1,\dots,Z_{n+1})$ into two parts, written as
$Z_{(0)}=(Z_1,\dots,Z_{n_0})$, and $Z_{(1)}=(Z_{n_0+1},\dots,Z_{n+1})$. 
Similarly, for any $z\in\Zcal^{n+1}$, we will write
$z_{(0)}=(z_1,\dots,z_{n_0})$, and $z_{(1)}=(z_{n_0+1},\dots,z_{n+1})$. The idea 
of split conformal prediction to train a model on $Z_{(0)}$, then compute scores
for all remaining data points in order to build a prediction set for the test
point. Concretely, we will work with a score function of the form
\[
\score((x,y), z_{(0)}).
\]
This accommodates choices of the form $\score((x,y), z_{(0)}) =
\ell(y,\hat{f}(x))$, for $\hat{f} = \alg(z_{(0)})$, as in
\eqref{eqn:loss_score}. Notice that $\alg$ here is not required to treat its
input data points symmetrically. The split CP set at coverage level $1-\alpha$
is then given by 
\begin{equation}
\label{eqn:scp}
\Ccal(X_{n+1}) = \bigg\{ y \in \Ycal: \score((X_{n+1},y),Z_{(0)}) \leq
\Quantile_{(1-\alpha)(1+\frac{1}{n_1})} \bigg(\frac{1}{n_1} \sum_{i=n_0+1}^n
\delta_{\score(Z_i,Z_{(0)})}\bigg) \bigg\}, 
\end{equation}
where we write $n_1 = n-n_0$. Split CP is known to have training-conditional
coverage under exchangeability.  

\begin{theorem}[\citealt{vovk2005algorithmic}]
\label{thm:scp}
If $Z_1,\dots,Z_{n+1}$ are exchangeable, then the split CP set given in
\eqref{eqn:scp} satisfies 
\[
\PPst{Y_{n+1} \in \Ccal(X_{n+1})}{Z_{(0)}} \geq 1-\alpha,
\]
almost surely. In particular, this implies $\PP{Y_{n+1} \in \Ccal(X_{n+1})} \geq
1-\alpha$.   
\end{theorem}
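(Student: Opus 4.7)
The plan is to recover Theorem \ref{thm:scp} by instantiating the unified framework with partial information that fully reveals the training block but only the unordered bag of the remaining points, and then invoking the exact-validity observation from Remark \ref{rmk:exact_validity}. Specifically, I would take
\[
U = \big(Z_{(0)}, \lbag Z_{(1)} \rbag\big),
\]
so that conditioning on $U$ reveals the ordered training data together with the unordered bag of the $n_1+1$ points $Z_{n_0+1},\dots,Z_{n+1}$; note that $\Zbag$ is a function of $U$, as required. Carrying the split CP score into the unified framework, I would write $\score(z,u) = \score(z_{n+1}, z_{(0)})$, which depends only on the test coordinate and the training block. For the approximate conditional distribution, I would take $Q_{Z \mid U=u}$ to be the uniform distribution over the $(n_1+1)!$ orderings of $z_{(1)}$ consistent with the bag, keeping $z_{(0)}$ fixed.

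The main technical step is to verify that this $Q_{Z \mid U}$ coincides with the true conditional $P_{Z \mid U}$. Exchangeability of $(Z_1,\dots,Z_{n+1})$ is invariance under all permutations; in particular, restricting to the subgroup of permutations that fix $\{1,\dots,n_0\}$ pointwise shows that the joint distribution of $(Z_{(0)}, Z_{(1)})$ is invariant under any reshuffling of the last $n_1+1$ coordinates with the first $n_0$ held fixed. Consequently, conditional on $Z_{(0)}$ the vector $Z_{(1)}$ is itself exchangeable, and further conditioning on its bag makes the ordering uniform, matching our choice of $Q_{Z \mid U}$. By Remark \ref{rmk:exact_validity} (applied in its conditional form, as is clear from the proof of Theorem \ref{thm:unified}), this exact match yields $\PPst{p \leq \alpha}{U} \leq \alpha$ almost surely, and since $Z_{(0)}$ is a function of $U$, the tower property then delivers the training-conditional bound $\PPst{p \leq \alpha}{Z_{(0)}} \leq \alpha$.

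The final step is routine bookkeeping: verify that the unified p-value $p$ corresponds to split CP set membership. Plugging $Q_{Z \mid U}$ into \eqref{eqn:unified_pvalue} and collapsing the sum over the $(n_1+1)!$ permutations (each index $i \in \{n_0+1,\dots,n+1\}$ occupies the last slot in exactly $n_1!$ permutations) gives
\[
p = \frac{1}{n_1+1} \sum_{i=n_0+1}^{n+1} \One{\score(Z_i, Z_{(0)}) \geq \score(Z_{n+1}, Z_{(0)})}.
\]
Lemma \ref{lem:pvalue_vs_quantile} then converts $\{p > \alpha\}$ into the test score lying at or below the $\lceil (1-\alpha)(n_1+1) \rceil$-th smallest among the $n_1+1$ augmented scores, and a short order-statistic argument shows that this coincides with the threshold in \eqref{eqn:scp} built from only the $n_1$ calibration scores with the inflated level $(1-\alpha)(1+1/n_1)$. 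The main obstacle is really just this quantile-convention bookkeeping; no additional probabilistic content beyond the conditional exchangeability argument above is needed.
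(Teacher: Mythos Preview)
Your proposal is correct and follows essentially the same route as the paper: the same choice of partial information $U=(Z_{(0)},\lbag Z_{(1)}\rbag)$, the same $Q_{Z\mid U}$ uniform over orderings of $z_{(1)}$, the same p-value reduction and quantile bookkeeping, and the same conditional-exchangeability argument to show $P_{Z\mid U}=Q_{Z\mid U}$. The only cosmetic difference is that the paper obtains the training-conditional statement by fixing $z_{(0)}$ and re-instantiating Theorem~\ref{thm:unified} under the conditional law $P_{Z\mid Z_{(0)}=z_{(0)}}$, whereas you extract the $U$-conditional bound directly from the proof of Theorem~\ref{thm:unified} and then tower down to $Z_{(0)}$; both are equivalent.
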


\subsubsection{View from the unified framework}

We now describe how split CP fits into the unified conformal framework.

\paragraph{Choices of $U$ and $Q_{Z \mid U}$.} 

Unlike in standard CP where we simply take $U=\Zbag$, here we will use a
different choice of partial information $U$: for split CP, we define $U =
(\Zbag, Z_{(0)})$.\footnote{Note that an equivalent choice would be $U =
  (\lbag Z_{(1)} \rbag, Z_{(0)})$, since this would contain the same information 
  about the data $Z$.} Overloading notation again, we write the score for any 
$u=(\zbag,z_{(0)})$ as   
\[
\score(z, u) = \score(z_{n+1}, z_{(0)}).
\]
That is, the score assigned to $z$ depends only on comparing $z_{n+1}$ (the last
observation) to the first portion $z_{(0)}$ of the training data. Next we define
\[
Q_{Z \mid U=(\zbag,z_{(0)})} = \frac{1}{(n_1+1)!} \sum_{\sigma \in
  \Scal_{n_1+1}} \delta_{(z_{(0)}, [z_{(1)}]_\sigma)}.
\]
This is the empirical distribution on all permutations of $z$ consistent
with $z_{(0)}$---that is, this distribution places equal weight on each vector  
of the form $(z_{(0)}, [z_{(1)}]_\sigma)$, which preserves the first $n_0$ entries
of $z$ (i.e., $z_{(0)}$) in their original order, and allows the remaining
$n_1+1$ entries (i.e., $z_{(1)}$) to be permuted arbitrarily.   

\paragraph{P-value.}

Under these choices, the p-value in \eqref{eqn:unified_pvalue} is 
\begin{align*}
p &= \frac{1}{(n_1+1)!} \sum_{\sigma \in \Scal_{n_1+1}}
  \One{\score([Z_{(1)}]_{\sigma(n_1+1)},Z_{(0)})\geq \score(Z_{n+1},Z_{(0)})} \\   
&= \frac{1}{n_1+1} \sum_{i=n_0+1}^{n+1} \One{\score(Z_i,Z_{(0)})\geq
  \score(Z_{n+1},Z_{(0)})},  
\end{align*}
where the second line follows similarly to the simplification used in standard
conformal. 

To see that this corresponds to split CP, recall that by Lemma
\ref{lem:pvalue_vs_quantile},   
\[
p > \alpha \iff \score(Z_{n+1},Z_{(0)}) \leq \Quantile_{1-\alpha}\bigg(
\frac{1}{n_1+1} \sum_{i=n_0+1}^{n+1} \delta_{\score(Z_i,Z_{(0)})} \bigg). 
\]
Next we will need an elementary fact about quantiles: for any values
$v_1,\dots,v_{m+1} \in \R$,  
\[
v_{m+1} \leq \Quantile_{1-\alpha}(v_1,\dots,v_{m+1}) \iff v_{m+1} \leq
\Quantile_{(1-\alpha)(1+1/m)}(v_1,\dots,v_m). 
\]
This implies that
\[
p > \alpha \iff \score(Z_{n+1},Z_{(0)}) \leq
\Quantile_{(1-\alpha)(1+\frac{1}{n_1})}\bigg( \frac{1}{n_1}\sum_{i=n_0+1}^n
\delta_{\score(Z_i,Z_{(0)})} \bigg). 
\]
The right-hand side above can be directly seen to be equivalent to the event
$Y_{n+1} \in \Ccal(X_{n+1})$ for the split CP set in \eqref{eqn:scp}. 

\paragraph{Validity.}

We prove Theorem \ref{thm:scp} using the unified result in Theorem
\ref{thm:unified}. 

\begin{proof}[Proof of Theorem \ref{thm:scp} via the unified framework] 
Since $Z=(Z_1,\dots,Z_{n+1})$ is exchangeable, it also holds that $Z_{(1)}\mid
Z_{(0)}=z_{(0)}$ is exchangeable, for almost every $z_{(0)}$. Fix any $z_{(0)}$
for which this conditional exchangeability holds. Let $P_Z$ denote the
distribution of $Z\mid Z_{(0)}=z_{(0)}$, and recalling that $U=(\Zbag,Z_{(0)})$, 
write $P_{Z,U}$ as the induced joint distribution on $(Z,U)$. Then we can
calculate the true conditional distribution as
\[
P_{Z \mid U=(\zbag,z_{(0)})} = \frac{1}{(n_1+1)!} \sum_{\sigma \in
  \Scal_{n_1+1}} \delta_{(z_{(0)}, [z_{(1)}]_\sigma)}, 
\]
because the distribution of $Z_{(1)}$ conditional on $Z_{(0)}=z_{(0)}$ is 
exchangeable. We therefore see that $P_{Z\mid U}=Q_{Z\mid U}$, which proves 
(recalling Remark \ref{rmk:exact_validity}) that $\PP{p\leq \alpha}\leq
\alpha$.    
\end{proof}

\subsection{Weighted conformal prediction}
\label{sec:wcp}

Weighted conformal prediction (WCP) works in a problem setting where 
$Z_1,\dots,Z_n$ are i.i.d.\ from $\Ptr$, whereas the test point $Z_{n+1}$ is
drawn independently from $\Pte$.\footnote{The results for WCP hold under a
  weaker assumption than what is stated here, called \emph{weighted
    exchangeability}, as defined in \citet{tibshirani2019conformal}. For
  simplicity, we work with independent data in our treatment here, but we note
  that the unified framework can also encompass the weighted exchangeable
  setting.} While $\Ptr$ and $\Pte$ are generally unknown, we additionally
assume that we have knowledge of the \emph{distribution shift} relating the
likelihood of $\Pte$ to $\Ptr$,  
\begin{equation}
\label{eqn:wcp_wstar}
w^*(x,y) = \frac{\d\Pte}{\d\Ptr}(x,y).
\end{equation}
As our knowledge of this shift might be only approximate, we will work with a
user-specified weight function $w$ that approximates $w^*$. WCP was introduced 
by \citet{tibshirani2019conformal}, who focused on covariate shift (where
$w^*(x,y)$ depends only on $x$). It has also been studied in other settings,
such as label shift \citep{podkopaev2021distribution}, causal inference
\citep{lei2021conformal}, and survival analysis
\citep{candes2023conformalized}. 

\subsubsection{Method and theory}

Fix any score function of the form $\score((x,y),\zbag)$ (as in standard CP, the
score here treats the dataset $z$ symmetrically). For arbitrary $y \in \Ycal$,
define as usual $Z^y = (Z_1,\dots,Z_n,(X_{n+1},y))$, and scores  
\[
S^y_i = \score(Z^y_i, \lbag Z^y \rbag), \quad i\in[n+1].
\]
Given the user-specified weight function $w:\Zcal\to(0,\infty)$, we define the
WCP set at coverage level $1-\alpha$ by   
\begin{equation}
\label{eqn:wcp}
\Ccal(X_{n+1}) = \bigg\{ y \in \Ycal: S^y_{n+1} \leq \Quantile_{1-\alpha}
\bigg(\frac{\sum_{i=1}^n w(Z_i) \cdot \delta_{S^y_i} + w(X_{n+1},y) \cdot
  \delta_{S^y_{n+1}}}{\sum_{i=1}^n w(Z_i) + w(X_{n+1},y)} \bigg) \bigg\}. 
\end{equation}
WCP is known to have exact coverage if $w = w^*$, and approximate coverage if  
$w \approx w^*$. 

\begin{theorem}
\label{thm:wcp}
For independent $Z_1,\dots,Z_n \sim \Ptr$ and $Z_{n+1} \sim \Pte$, where $w^* =
\d\Pte/\d\Ptr$, the WCP set in \eqref{eqn:wcp} satisfies the following: 

\begin{itemize}[align=left,labelwidth={1.25em}]
\item[(a) \textnormal{\citep{tibshirani2019conformal}}.] If $w = w^*$, then
  $\PP{Y_{n+1} \in \Ccal(X_{n+1})} \geq 1-\alpha$.  

\item[(b) \textnormal{\citep{lei2021conformal}}.] Assuming $\Ep{\Ptr}{w(X,Y)} <
  \infty$, and defining a normalized version of $w$ by $\bar{w}(x,y) = w(x,y) /
  \Ep{\Ptr}{w(X,Y)}$, we have  
  \[
    \PP{Y_{n+1} \in \Ccal(X_{n+1})} \geq 1-\alpha - \frac{1}{2}
    \Ep{\Ptr}{|\bar{w}(X,Y) - w^*(X,Y)|}. 
  \]
\end{itemize}
\end{theorem}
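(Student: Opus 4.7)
The plan is to instantiate the unified framework of Theorem \ref{thm:unified} with partial information $U=\Zbag$, score $\score(z,u)=\score(z_{n+1},\zbag)$ (overloading notation as in standard CP), and proposal
\[
Q_{Z\mid U=\zbag} \;=\; \sum_{\sigma\in\Scal_{n+1}} \frac{w(z_{\sigma(n+1)})}{\sum_{\sigma'\in\Scal_{n+1}} w(z_{\sigma'(n+1)})}\,\delta_{z_\sigma}.
\]
Grouping the $n!$ permutations with $\sigma(n+1)=i$, the unified p-value \eqref{eqn:unified_pvalue} collapses to
\[
p \;=\; \frac{\sum_{i=1}^{n+1} w(Z_i)\,\One{\score(Z_i,\Zbag)\geq \score(Z_{n+1},\Zbag)}}{\sum_{i=1}^{n+1} w(Z_i)},
\]
and by Lemma \ref{lem:pvalue_vs_quantile} the event $p>\alpha$ is exactly $Y_{n+1}\in\Ccal(X_{n+1})$ for the WCP set \eqref{eqn:wcp}. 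So bounding $\PP{p\leq\alpha}$ yields the desired coverage guarantee.

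For part (a), I would compute the true conditional distribution directly. The joint density of $Z$ factorizes as $w^*(z_{n+1})\prod_{i=1}^{n+1}\Ptr(z_i)$, so symmetrizing yields
\[
P_{Z\mid U=\zbag} \;=\; \sum_{\sigma\in\Scal_{n+1}} \frac{w^*(z_{\sigma(n+1)})}{\sum_{\sigma'\in\Scal_{n+1}} w^*(z_{\sigma'(n+1)})}\,\delta_{z_\sigma}.
\]
When $w=w^*$ this coincides with $Q_{Z\mid U=\zbag}$, and Remark \ref{rmk:exact_validity} gives $\PP{p\leq\alpha}\leq\alpha$ immediately.

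For part (b), I would invoke Theorem \ref{thm:unified} together with information monotonicity (Remark \ref{rmk:information_monotonicity}) to bound $\PP{p\leq\alpha}\leq\alpha+\dtv(P_{Z,U},Q_{Z,U})$ for a well-chosen joint $Q_{Z,U}$. The natural choice is the joint law under which $Z_1,\dots,Z_n\iidsim \Ptr$ and, independently, $Z_{n+1}\sim \bar{w}\cdot \Ptr$; the assumption $\Ep{\Ptr}{w(X,Y)}<\infty$ ensures $\bar{w}\cdot \Ptr$ is a probability distribution. Because $\bar{w}$ is proportional to $w$, this joint induces exactly the conditional $Q_{Z\mid U}$ defined above. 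Since $P_{Z,U}$ and $Q_{Z,U}$ agree on the first $n$ data points and differ only in the marginal of $Z_{n+1}$, tensorization of TV gives
\[
\dtv(P_{Z,U},Q_{Z,U}) \;=\; \dtv(\Pte,\bar{w}\cdot \Ptr) \;=\; \frac{1}{2}\,\Ep{\Ptr}{|w^*(X,Y)-\bar{w}(X,Y)|},
\]
matching the claimed bound.

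The main obstacle is the choice of $Q_{Z,U}$ in part (b): it must simultaneously reproduce the conditional $Q_{Z\mid U}$ used in the p-value and be easy to compare to $P_{Z,U}$ in TV. Using $\bar{w}$ rather than $w$ in the marginal of $Z_{n+1}$ is the single step that reconciles these requirements, since conditional probabilities depend only on weight \emph{ratios} and so renormalizing $w$ leaves $Q_{Z\mid U}$ unchanged while turning $w\cdot \Ptr$ into a valid probability distribution. This is also what explains why the bound in part (b) must feature $\bar{w}$ rather than the raw weight $w$.
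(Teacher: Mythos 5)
Your proposal is correct and follows essentially the same route as the paper: the same instantiation $U=\Zbag$ with the weighted-permutation $Q_{Z\mid U}$, the same collapse of the unified p-value to the WCP event via Lemma \ref{lem:pvalue_vs_quantile}, and the same comparison law $Q_Z=\Ptr^n\times(\Ptr\circ\bar{w})$, with the TV between product measures reducing to $\tfrac12\Ep{\Ptr}{|\bar{w}(X,Y)-w^*(X,Y)|}$. The only cosmetic differences are that you prove part (a) directly by matching $P_{Z\mid U}$ to $Q_{Z\mid U}$ (Remark \ref{rmk:exact_validity}) instead of treating it as the $w=w^*$ special case of (b), and that you specify the comparison joint by its marginal on $Z$ and verify the induced conditional, whereas the paper fixes $Q_U$ (the bag law of the symmetrized mixture $H$) and derives the induced marginal $Q_Z$ --- two equivalent ways of constructing the same $Q_{Z,U}$.
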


\subsubsection{View from the unified framework}

We describe how WCP fits into the unified conformal framework.

\paragraph{Choices of $U$ and $Q_{Z \mid U}$.}

Define $U = \Zbag$, and overloading notation again, write the score
function for $u=\zbag$ as     
\[
\score(z,u) = \score(z_{n+1},\zbag).
\]
Next define 
\[
Q_{Z \mid U=\zbag} = \frac{\sum_{\sigma \in \Scal_{n+1}} w(z_{\sigma(n+1)})
  \cdot \delta_{z_\sigma}}{\sum_{\sigma \in \Scal_{n+1}} w(z_{\sigma(n+1)})}, 
\]
which is the weighted empirical distribution that places weight proportional to
$w(z_{\sigma(n+1)})$ on each permutation $z_\sigma$.  

\paragraph{P-value.}

Under these choices, the p-value in \eqref{eqn:unified_pvalue} is 
\begin{align*}
p &= \frac{\sum_{\sigma \in \Scal_{n+1}} w(Z_{\sigma(n+1)}) \cdot
  \One{\score(Z_{\sigma(n+1)}, \Zbag) \geq \score(Z_{n+1},
  \Zbag)}}{\sum_{\sigma \in \Scal_{n+1}} w(Z_{\sigma(n+1)})} \\ 
&= \frac{\sum_{i=1}^{n+1} w(Z_i) \cdot \One{\score(Z_i, \Zbag) \geq 
  \score(Z_{n+1}, \Zbag)}}{\sum_{i=1}^{n+1} w(Z_i)},
\end{align*}
where the second line follows from the fact that, for each $i$, there are $n!$
many permutations $\sigma\in\Scal_{n+1}$ with $\sigma(n+1)=i$. To see that this
corresponds to WCP, note that by Lemma \ref{lem:pvalue_vs_quantile},    
\[
p > \alpha \iff \score(Z_{n+1}, \Zbag) \leq \Quantile_{1-\alpha} \bigg(
\frac{\sum_{i=1}^{n+1} w(Z_i) \cdot \delta_{\score(Z_i,
    \Zbag)}}{\sum_{i=1}^{n+1} w(Z_i)} \bigg). 
\]
The right-hand side above can be directly seen to be equivalent to the event
$Y_{n+1} \in \Ccal(X_{n+1})$, for the WCP set in \eqref{eqn:wcp}. 

\paragraph{Validity.}

We prove Theorem \ref{thm:wcp} using the unified result in Theorem
\ref{thm:unified}. 

\begin{proof}[Proof of Theorem \ref{thm:wcp} via the unified framework]
Recalling $U = \Zbag$, define $Q_U$ to be the distribution of $\Zbag$ when $Z
\sim H$, where  
\[
\d{H}(z) = \frac{\sum_{i=1}^{n+1} \bar{w}(z_i)}{n+1} \cdot \d\Ptr^{n+1}(z).
\]
For intuition, note that we can interpret this as a mixture: we sample an index
$i\in[n+1]$ uniformly at random, then sample $Z_i$ from a distribution $\Ptr
\circ \bar{w}$, defined by $\d(\Ptr \circ \bar{w})(x,y) = w(x,y) \cdot
\d\Ptr(x,y)$, whereas the remaining $n$ data points are drawn as \smash{$Z_j
  \iidsim \Ptr$}, for $j \not= i$. With $Q_{Z,U} = Q_{Z\mid U}\times Q_U$, as
usual, we examine the corresponding marginal $Q_Z$ on $Z$: by definition of
$Q_U$ and $Q_{Z\mid U}$, we have     
\allowdisplaybreaks
\begin{align*}
Q_Z(A) &= \Ep{Z \sim H}{\frac{\sum_{\sigma \in \Scal_{n+1}} w(Z_{\sigma(n+1)}) 
  \cdot \One{Z_\sigma \in A}}{\sum_{\sigma \in \Scal_{n+1}} w(Z_{\sigma(n+1)})}} 
  \\  
&= \Ep{Z \sim \Ptr^{n+1}}{\frac{\sum_{i=1}^{n+1} \bar{w}(Z_i)}{n+1} \cdot
  \frac{\sum_{\sigma \in \Scal_{n+1}} w(Z_{\sigma(n+1)}) \cdot
  \One{Z_\sigma \in A}}{\sum_{\sigma \in \Scal_{n+1}} w(Z_{\sigma(n+1)})}} \\  
&= \Ep{Z \sim \Ptr^{n+1}}{\frac{\sum_{\sigma \in \Scal_{n+1}} 
  \bar{w}(Z_{\sigma(n+1)})}{(n+1)!} \cdot \frac{\sum_{\sigma \in \Scal_{n+1}}  
  \bar{w}(Z_{\sigma(n+1)}) \cdot \One{Z_\sigma \in A}}
  {\sum_{\sigma \in \Scal_{n+1}} \bar{w}(Z_{\sigma(n+1)})}} \\
&= \Ep{Z \sim \Ptr^{n+1}}{\frac{1}{(n+1)!}\sum_{\sigma \in \Scal_{n+1}}  
  \bar{w}(Z_{\sigma(n+1)}) \cdot \One{Z_\sigma \in A}} \\
&= \Ep{Z \sim \Ptr^{n+1}}{\bar{w}(Z_{n+1})\cdot \One{Z \in A}} \\ 
&= \Ep{Z \sim \Ptr^n \times (\Ptr \circ \bar{w})}{\One{Z \in A}},
\end{align*}
where the next-to-last line holds by exchangeability of $\Ptr^{n+1}$.  

Meanwhile, the true distribution on $Z$ is $P_Z = \Ptr^n \times \Pte = \Ptr^n 
\times (\Ptr \circ w^*)$. Theorem \ref{thm:unified} thus gives the Type I error
bound,
\[
\PP{p\leq \alpha} \leq \alpha + \dtv(P_{S,T},Q_{S,T}) \leq \alpha +
\dtv(P_Z,Q_Z), 
\]
where (recalling information monotonicity, in Remark
\ref{rmk:information_monotonicity}) the last step uses the fact that $(S,T)$ is
a function of $Z$.   

To complete the proof, we only need to bound $\dtv(P_Z,Q_Z)$. We have 
\begin{align*}
\dtv(P_Z,Q_Z)
&=\dtv(\Ptr^n\times (\Ptr\circ w^*), \Ptr^n\times (\Ptr\circ \bar{w})) \\
&=\dtv(\Ptr\circ w^*, F\circ \bar{w} )\\
&=\frac{1}{2}\int_\Zcal \left|w^*(x,y) - \bar{w}(x,y)\right| \;\d\Ptr(x,y) \\ 
&=\frac{1}{2}\Ep{\Ptr}{|\bar{w}(X,Y)-w^*(X,Y)|},
\end{align*}
where the next-to-last line applies the well-known $L^1$ representation for TV
distance. This recovers the result in part (b). In the special case where
$w=w^*$ (and so $\bar{w}=w^*$), we have $\Ep{\Ptr}{|\bar{w}(X,Y)-w^*(X,Y)|}=0$,
which recovers the result in part (a). 
\end{proof}

\subsection{Nonexchangeable conformal prediction}
\label{sec:nexcp}

Nonexchangeable conformal prediction (NexCP) works in a setting where
$Z_1,\dots,Z_{n+1}$ are drawn from an arbitrary joint distribution, but there is
some underlying structure (e.g., the samples are indexed by time or space)
allowing us to posit a guess as to which of $Z_1,\dots,Z_n$ are ``closer'' in
distribution to $Z_{n+1}$. NexCP was introduced by \citet{barber2023conformal}. 

\subsubsection{Method and theory}

Fix any score function $\score((x,y),z)$, which assigns a value to a data point
(its first argument) based on an \emph{ordered} dataset (its second
argument). Notice that this score function is not required to be symmetric in
$z$, and accommodates choices of the form $\score((x,y), z) =
\ell(y,\hat{f}(x))$, for a fitted model $\hat{f} = \alg(z)$, as in
\eqref{eqn:loss_score}, where the algorithm $\alg$ does not need to treat its
input data points symmetrically.  

Given user-specified weights $w_1,\dots,w_{n+1} \geq 0$, with $\sum_{k=1}^{n+1}
w_k = 1$, we sample an index 
\begin{equation}
\label{eqn:nexcp_k}
K \sim \sum_{k=1}^{n+1} w_k \cdot \delta_k.
\end{equation}
We then define, for arbitrary $y \in \Ycal$, the augmented dataset $Z^y = 
(Z_1,\dots,Z_n,(X_{n+1},y))$ as usual, and scores
\[
S^{y,K}_i = \score(Z^y_i, (Z^y)^K), \quad i\in[n+1],
\]
where for any $z\in\R^{n+1}$ and $k\in[n+1]$, we use $z^k$ to denote the 
vector obtained by swapping the entries in positions $k$ and $n+1$ of $z$, i.e.,  
\[
z^k = (z_1,\dots,z_{k-1},z_{n+1},z_{k+1},\dots,z_n,z_k)\in\Zcal^{n+1}
\]
(and, to handle the case $k=n+1$, we simply define $z^{n+1}=z$). We now
define the NexCP set at coverage level $1-\alpha$ by     
\begin{equation}
\label{eqn:nexcp}
\Ccal(X_{n+1}) = \bigg\{ y \in \Ycal: S^{y,K}_{n+1} \leq \Quantile_{1-\alpha}
\bigg(\sum_{i=1}^{n+1} w_i \cdot \delta_{S^{y,K}_i} \bigg) \bigg\}. 
\end{equation}
NexCP is known to provide a coverage guarantee for arbitrary joint
distributions.   

\begin{theorem}[\citealt{barber2023conformal}]
\label{thm:nexcp}
For $Z_1,\dots,Z_{n+1}$ of arbitrary joint distribution, if $w_{n+1}\geq w_i$
for all $i\in[n+1]$, then the NexCP set in \eqref{eqn:nexcp} satisfies 
\[
\PP{Y_{n+1}\in\Ccal(X_{n+1})} \geq 1 - \alpha - \sum_{k=1}^{n+1} w_k \cdot
\dtv(g(Z),g(Z^k)), 
\]
where the function $g:\Zcal^{n+1}\to\R^{n+1}$ is defined as     
\[
g(z) = \big( \score(z_1,z),\dots,\score(z_{n+1},z) \big).
\]
In particular, by information monotonicity, this implies that
$\PP{Y_{n+1}\in\Ccal(X_{n+1})} \geq 1 - \alpha - \sum_{k=1}^{n+1} w_k \cdot
\dtv(Z,Z^k)$. 
\end{theorem}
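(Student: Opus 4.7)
Following the templates for CP, split CP, and WCP in the preceding subsections, I would derive Theorem \ref{thm:nexcp} by instantiating Theorem \ref{thm:unified}. The partial information should incorporate both the bag $\Zbag$ and the auxiliary swap index $K$ (drawn from $\sum_k w_k \delta_k$ independently of $Z$); a natural candidate is $U = (\Zbag, K, Z^K)$, since NexCP's scores $\{\score(Z_i, Z^K)\}_i$ are all computed against the same ``effective dataset'' $Z^K$, which therefore must be treated as frozen by $u$. With the score function $\score(z, u) = \score(z_{n+1}, w)$ for $u = (\zbag, k, w)$, and with $Q_{Z\mid U=(\zbag, k, w)}$ chosen so that the induced law of $\score(Z', u)$ under $Q_{Z\mid U}$ is exactly NexCP's weighted reference $\sum_j w'_j \delta_{g(w)_j}$ (where $w'_j = w_{\tau_{k, n+1}(j)}$ is obtained by swapping positions $k$ and $n+1$), the framework's p-value \eqref{eqn:unified_pvalue} reduces to NexCP's
\[
p = \sum_{i=1}^{n+1} w_i \cdot \One{\score(Z_i, Z^K) \geq \score(Z_{n+1}, Z^K)}.
\]
By Lemma \ref{lem:pvalue_vs_quantile}, the event $\{p > \alpha\}$ is then equivalent to $\{Y_{n+1} \in \Ccal(X_{n+1})\}$ for the NexCP set \eqref{eqn:nexcp}.

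\paragraph{Controlling the TV term.}
To aim for the tight bound $\sum_k w_k \cdot \dtv(g(Z), g(Z^k))$, I would take $Q_U$ to be the distribution under which $K \sim \sum_k w_k \delta_k$ is drawn independently of an auxiliary $Z^K_U \sim P_Z$ (with $\Zbag = \lbag Z^K_U \rbag$ determined). Under this $Q_U$, the induced joint of $(g(Z^K_U), K)$ is $P_{g(Z)} \otimes \sum_k w_k \delta_k$, whereas under $P$ it is the mixture $\sum_k w_k \delta_k \otimes P_{g(Z^k)}$; since both share the same marginal in $K$, the conditional-TV decomposition gives
\[
\dtv\bigl(P_{(g(Z^K_U), K)}, Q_{(g(Z^K_U), K)}\bigr) = \sum_{k=1}^{n+1} w_k \cdot \dtv\bigl(g(Z), g(Z^k)\bigr).
\]
The remaining step is to lift this to an $(S, T)$-bound via an information-monotonicity argument, which when combined with Theorem \ref{thm:unified} would yield the theorem's inequality.

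\paragraph{Main obstacle.}
The technically delicate step is the information-monotonicity lift from $(g(Z^K_U), K)$ to $(S, T)$. Under $P$, the test statistic $S = g(Z^K)_K$ is a deterministic function of $(g(Z^K), K)$, but under $Q$ the approximating conditional $Q_{Z\mid U}$ introduces auxiliary randomness (the random choice of which entry of $g(w)$ the test score lands on), so $S$ is a \emph{random} function of $(g(Z^K_U), K)$ rather than a deterministic one. A naive conditional-TV calculation thus picks up an extra penalty of order $1 - w_{n+1}$ that is not present in the theorem's bound. Resolving this will require either a cleverer coupling of $P$ and $Q$ (one that absorbs the auxiliary randomness in $Q_{Z\mid U}$ into the joint, perhaps by enlarging the state space to include the auxiliary index as part of the ambient randomness under $P$ as well) or an alternative choice of $U$ and $Q_{Z\mid U}$ that avoids the mismatch in determinism. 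The condition $w_{n+1} \geq w_i$ for all $i$ should play a role precisely at this step, allowing such a coupling to be constructed cleanly. This is where NexCP differs nontrivially from the CP, split CP, and WCP cases, and is the central technical contribution of casting NexCP into the unified framework.
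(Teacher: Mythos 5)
Your proposal has the right skeleton (instantiate Theorem \ref{thm:unified} with partial information built from the swapped vector $Z^K$, approximate $Z\mid U$ by a $w$-weighted distribution over swaps, take the $Z^K$-marginal of $Q_U$ to be $P_Z$, and decompose the TV distance as a mixture over $k$), but the proof is not complete: the step you defer as the ``main obstacle'' is precisely the substance of the theorem, and it is your own choices that create it. By putting $K$ into $U$ (so that $Z=(Z^K)^K$ is a deterministic function of $U$) and by insisting that the framework p-value coincide exactly with NexCP's $p^*=\sum_{i} w_i\One{\score(Z_i,Z^K)\geq\score(Z_{n+1},Z^K)}$ via permuted weights $w_{\tau_{K,n+1}(\cdot)}$, you make $S$ deterministic given $U$ under $P$ but random given $U$ under $Q_{Z\mid U}$; as you observe, a direct conditional comparison then leaves an extra term of order $1-w_{n+1}$, and you do not exhibit the coupling that would remove it, nor do you show where $w_{n+1}\geq w_i$ actually enters. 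As written, the argument establishes the p-value/prediction-set equivalence but not the stated bound.

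The paper's proof avoids both difficulties with two moves you are missing. First, it takes $U=Z^K$ only ($K$ is \emph{not} revealed) and uses the unpermuted-weight approximation $Q_{Z\mid U=u}=\sum_k w_k\,\delta_{u^k}$. Writing $h_k(v)=\big(v_k,\Quantile_{1-\alpha}(\sum_{k'}w_{k'}\delta_{v_{k'}})\big)$, under $P$ one has $(S,T)=h_K(g(Z^K))$ with $K$ random, so $P_{S,T}=\sum_k w_k\,P_{h_k(g(Z^k))}$, while under $Q$ (with $Q_U=P_Z$) the conditional randomness of $Q_{Z\mid U}$ produces the \emph{same} mixture structure, $Q_{S,T}=\sum_k w_k\,Q_{h_k(g(U))}$; convexity of TV plus information monotonicity then give $\dtv(P_{S,T},Q_{S,T})\leq\sum_k w_k\,\dtv(g(Z^k),g(Z))$ with no extra penalty---essentially your conditional-TV decomposition, but now valid because the mixture over $k$ appears on both sides rather than being conditioned away. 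Second, with this choice the framework p-value $p=\sum_k w_k\One{\score((Z^K)_k,Z^K)\geq\score(Z_{n+1},Z^K)}$ is \emph{not} NexCP's $p^*$; comparing the terms at positions $K$ and $n+1$ shows $p-p^*=(w_K-w_{n+1})(1-\One{\score(Z_K,Z^K)\geq\score(Z_{n+1},Z^K)})\leq 0$, so $p\leq p^*$ pointwise and the Type I error bound transfers to NexCP. This dominance step is the sole place the hypothesis $w_{n+1}\geq w_i$ is used---not in any coupling at the TV stage. In short, the missing ideas are (i) hiding $K$ from $U$ so the two mixtures align, and (ii) replacing exact equality of p-values by the inequality $p\leq p^*$.
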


This result can be interpreted as a guaranteeing that coverage holds at
approximately the desired level $1-\alpha$, as long as the dataset $Z$---or 
rather, its corresponding vector of scores, $g(Z)$---is approximately
exchangeable.  

\subsubsection{View from the unified framework}

We describe how NexCP fits into the unified conformal framework. 

\paragraph{Choices of $U$ and $Q_{Z\mid U}$.}

Set $U=Z^K$, where $K$ is drawn as in \eqref{eqn:nexcp_k}. Notice that the
choice of the partial information $U$ in this setting contains additional
information beyond the bag of data. Overloading notation, we write the score
function as  
\[
\score(z,u) = \score(z_{n+1},u).
\]
Next define
\[
Q_{Z \mid U=u} = \sum_{k=1}^{n+1} w_k \cdot \delta_{u^k},
\]
placing weight $w_k$ on each swapped vector $u^k$. 

\paragraph{P-value.}

Under these choices, the p-value in \eqref{eqn:unified_pvalue} is
\[
p = \sum_{k=1}^{n+1} w_k \cdot \One{\score((Z^K)_k,Z^K) \geq
  \score(Z_{n+1},Z^K)}.
\]
On the event $K\in[n]$, then we calculate
\[
p = \sum_{k\in[n]\backslash\{K\}} w_k \cdot \One{\score(Z_k,Z^K) \geq
  \score(Z_{n+1},Z^K)} + w_K + w_{n+1} \cdot \One{\score(Z_K,Z^K) \geq
  \score(Z_{n+1},Z^K)}, 
\] 
using the fact that $(Z^K)_k = Z_k$ for all $k \in[n]\backslash\{K\}$, whereas
$(Z^K)_K = Z_{n+1}$ and $(Z^K)_{n+1} = Z_K$. Since $w_{n+1}\geq w_K$ by
assumption, we therefore have 
\begin{align*}
p &\leq \sum_{k\in[n]\backslash\{K\}} w_k \cdot \One{\score(Z_k,Z^K) \geq
  \score(Z_{n+1},Z^K)} + w_{n+1} + w_K \cdot \One{\score(Z_K,Z^K) \geq
  \score(Z_{n+1},Z^K)} \\
&= \sum_{k=1}^{n+1} w_k \cdot \One{\score(Z_k,Z^K) \geq \score(Z_{n+1},Z^K)}
=: p^*.
\end{align*} 
If instead $K=n+1$, then by definition we simply have $p=p^*$. Based on Lemma
\ref{lem:pvalue_vs_quantile}, we can see that $Y_{n+1} \in \Ccal(X_{n+1})$ for
NexCP \eqref{eqn:nexcp} is equivalent to $p^* > \alpha$. Since we have shown
that $p\leq p^*$, note that $\PP{p^* \leq\alpha}\leq \PP{p\leq \alpha}$ and
thus a Type I error bound on $p$ will translate into one for the NexCP p-value
$p^*$.  

\paragraph{Validity.}

We prove Theorem \ref{thm:nexcp} using the unified result in Theorem
\ref{thm:unified}. 

\begin{proof}[Proof of Theorem \ref{thm:nexcp} via the unified framework]
By definition of $Q_{Z \mid U}$, observe that
\[
\thresh_\alpha(U) = \Quantile_{1-\alpha} \bigg(\sum_{k=1}^{n+1} w_k \cdot 
\delta_{\score((U^k)_{n+1},U)}\bigg) = \Quantile_{1-\alpha}
\bigg(\sum_{k=1}^{n+1} w_k \cdot \delta_{\score(U_k,U)}\bigg),
\]
and thus, by definition of $g$,
\[
\thresh_\alpha(U) = \Quantile_{1-\alpha}\bigg(\sum_{k=1}^{n+1} w_k \cdot
\delta_{g(U)_k}\bigg).    
\]
Similarly, for any $k\in[n+1]$,
\[
\score(U^k,U) = \score((U^k)_{n+1},U) = \score(U_k,U) = g(U)_k.
\]
For convenience, define 
\[
h_k(v) = \bigg( v_k, \, \Quantile_{1-\alpha}\bigg(\sum_{k'=1}^{n+1} w_{k'} 
  \cdot \delta_{v_{k'}}\bigg) \bigg).
\]
By construction, then, for any $k\in[n+1]$, we have
\[
(\score(U^k,U),\thresh_\alpha(U))= h_k(g(U)).
\]
Therefore, for $U=Z^K$, we have
\[
(S,T) = (\score(Z,U),\thresh_\alpha(U)) = (\score(U^K,U),\thresh_\alpha(U)) =
h_K(g(U)) = h_K(g(Z^K)).
\]
Since by construction we sample $K\sim \sum_{k=1}^{n+1} w_k \cdot \delta_k$  
independently of $Z$, note that under the joint distribution $(Z,U)\sim
P_{Z,U}$, we have   
\[
P_{S,T} = \sum_{k=1}^{n+1} w_k \cdot P_{h_k(g(Z^k))},
\]
where $P_{h_k(g(Z^k))}$ denotes the distribution of $h_k(g(Z^k))$ induced by
$Z\sim P_Z$.   

Next, we choose to define $Q_U=P_Z$, and now we need to calculate the 
distribution $Q_{S,T}$. Since $Q_{Z\mid U} = \sum_{k=1}^{n+1} w_k \cdot
\delta_{U^k}$, we can therefore calculate that under $Q_{Z\mid U}$, 
\[
(S,T) = (\score(Z,U), \thresh_\alpha(U)) \sim \sum_{k=1}^{n+1} w_k \cdot
\delta_{(\score(U^k,U),\thresh_\alpha(U))} = \sum_{k=1}^{n+1} w_k \cdot 
\delta_{h_k(g(U))}. 
\]
In other words, we have shown 
\[
Q_{S,T} = \sum_{k=1}^{n+1} w_k \cdot Q_{h_k(g(U))},
\]
where $Q_{h_k(g(U))}$ denotes the distribution of $h_k(g(U))$ induced by $U \sim
Q_U$.  

We therefore calculate
\begin{align*}
\dtv(P_{S,T},Q_{S,T}) 
&= \dtv\bigg(\sum_{k=1}^{n+1} w_k \cdot P_{h_k(g(Z^k))}, \sum_{k=1}^{n+1} w_k  
  \cdot Q_{h_k(g(U))}\bigg) \\ 
&\leq \sum_{k=1}^{n+1}w_k\cdot \dtv\Big(P_{h_k(g(Z^k))},Q_{h_k(g(U))}\Big) \\ 
&\leq \sum_{k=1}^{n+1}w_k\cdot \dtv\Big(P_{g(Z^k)},Q_{g(U)}\Big) \\
&= \sum_{k=1}^{n+1}w_k\cdot \dtv\Big(P_{g(Z^k)},P_{g(Z)}\Big),
\end{align*}
where the third step uses the information monotonicity property of total
variation distance (recall Remark \ref{rmk:information_monotonicity}), while the 
last step uses $Q_U=P_Z$. Therefore, 
\[
\PP{p\leq \alpha}\leq \alpha + \dtv(P_{S,T},Q_{S,T})\leq \alpha +
\sum_{k=1}^{n+1}w_k \cdot \dtv(P_{g(Z^k)},P_{g(Z)}),
\]
which completes the proof.
\end{proof}

\subsection{Randomly-localized conformal prediction}
\label{sec:rlcp}

Localized conformal methods work in a setting where $Z_1,\dots,Z_{n+1}$ are
exchangeable but we seek to go beyond the marginal guarantee
$\PP{Y_{n+1}\in\Ccal(X_{n+1})} \geq 1-\alpha$ offered by standard CP. A stronger
coverage guarantee that holds conditionally on the value of the test feature, as
in $\PPst{Y_{n+1}\in\Ccal(X_{n+1})}{X_{n+1}=x} \geq 1-\alpha$, would be a highly
desirable (and aspirational) goal. Unfortunately, when the covariate $X$ is
continuously distributed, well-known negative results by
\citet{vovk2012conditional, lei2014distribution} show this is not possible to
achieve in a distribution-free manner, excepting trivial (infinitely wide)
prediction intervals.

Researchers have therefore turned to developing procedures with
\emph{approximate} conditional coverage over small neighborhoods in the feature
space $\Xcal$. \citet{guan2023localized} introduced localized-conformal
prediction (LCP), which computes a prediction set by assigning higher weight to
data points that lie closer to the test point $X_{n+1}$. LCP gives marginal
coverage, and leads to approximate conditional coverage in practice, but does
not have accompanying finite-sample theory. To address this,
\citet{hore2023conformal} derived a variant called randomly-localized conformal
prediction (RLCP), which we study in this subsection.

\subsubsection{Method and theory}

Fix any score function of the form $\score((x,y),\zbag)$. For arbitrary $y \in
\Ycal$, we now define as usual $Z^y = (Z_1,\dots,Z_n,(X_{n+1},y))$, and scores     
\[
S^y_i = \score(Z^y_i, \lbag Z^y\rbag), \quad i\in[n+1],
\]
The user specifies a ``localizing'' kernel $H:\Xcal\times\Xcal\to\R_+$, e.g., in
the case $\Xcal=\R^d$, we can use the Gaussian kernel $H(x,x') =
e^{-\|x-x'\|^2_2/2\sigma^2} / (2\pi\sigma^2)^{d/2}$. We assume that, for any
$x\in\Xcal$, $H(x,\cdot)$ is a density (relative to some base measure), with
$H(x,x)>0$. We then sample
\[
\tilde{X}_{n+1}\mid X_{n+1} \sim H(X_{n+1},\cdot),
\]
which is then used to define weights in the RLCP set at coverage level $1-\alpha$,
defined by\footnote{\citet{hore2023conformal} describe a split implementation of
  RLCP, whereas what we describe here is the corresponding full-data version.} 
\begin{equation}
\label{eqn:rlcp}
\Ccal(X_{n+1}) = \bigg\{ y \in \Ycal: S^y_{n+1} \leq \Quantile_{1-\alpha} \bigg( 
\frac{\sum_{i=1}^{n+1} H(X_i,\tilde{X}_{n+1}) \cdot \delta_{S^y_i}}
{\sum_{i=1}^{n+1} H(X_i,\tilde{X}_{n+1})} \bigg) \bigg\}.  
\end{equation}
As $\tilde{X}_{n+1}$ is sampled to lie near $X_{n+1}$, we can view it as a proxy
for the test point, ensuring that RLCP will generally place higher weights on
data points near the test point.  

RLCP is known to have marginal coverage under exchangeability, and an
approximate form of conditional coverage.\footnote{\citet{hore2023conformal} 
  also derive a result on robustness to unknown covariate shift; this is
  possible to obtain via the unified framework as well but for a concise
  presentation we do not address this here.} 

\begin{theorem}[\citealt{hore2023conformal}]
\label{thm:rlcp}
If $Z_1,\dots,Z_{n+1}$ are exchangeable, then the RLCP set in \eqref{eqn:rlcp}
satisfies a marginal coverage guarantee $\PP{Y_{n+1}\in\Ccal(X_{n+1})}\geq
1-\alpha$. Moreover, for any $B\subseteq\Xcal$, RLCP satisfies an approximate
conditional coverage guarantee, 
\begin{align*}
\PPst{Y_{n+1}\in\Ccal(X_{n+1})}{X_{n+1}\in B} &\geq 1-\alpha \\ 
&\;\; - \frac{\inf_{\epsilon>0} \Big\{ \PPin{\|X_{n+1}-\tilde{X}_{n+1}\|>
  \epsilon} + \PPin{X_{n+1}\in \textnormal{bd}_{2\epsilon}(B)} \Big\}}
{\PP{X_{n+1}\in B}},
\end{align*}
where $\|\cdot\|$ is any norm on $\Xcal$, and where the $r$-boundary of a set
$B$ is defined as 
\[
\textnormal{bd}_r(B) = \bigg\{ x \in B: \inf_{x'\in B^c} \|x-x'\| \leq r
\bigg\}. 
\]
\end{theorem}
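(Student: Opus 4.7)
The plan is to cast RLCP in the unified framework with the partial information $U = (\Zbag, \tilde{X}_{n+1})$ (which now carries an auxiliary random source in addition to the bag), the score function $\score(z, u) = \score(z_{n+1}, \zbag)$, and the approximation
\[
Q_{Z\mid U=(\zbag,\tilde{x})} = \frac{\sum_{\sigma\in\Scal_{n+1}} H(x_{\sigma(n+1)}, \tilde{x}) \cdot \delta_{z_\sigma}}{\sum_{\sigma\in\Scal_{n+1}} H(x_{\sigma(n+1)}, \tilde{x})}.
\]
A calculation parallel to the WCP case collapses the resulting unified p-value to the weighted form with weights $H(X_i, \tilde{X}_{n+1})$, so by Lemma \ref{lem:pvalue_vs_quantile} the event $\{p>\alpha\}$ coincides with $\{Y_{n+1}\in\Ccal(X_{n+1})\}$ for the RLCP set.

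For the marginal claim, I would compute the true conditional distribution $P_{Z\mid U}$: using exchangeability of $(Z_1,\dots,Z_{n+1})$ (so its joint density is symmetric in its arguments) together with the fact that $\tilde{X}_{n+1}$ given $Z$ depends only on $X_{n+1}$ through $H(X_{n+1},\cdot)$, Bayes' rule shows that given $U=(\zbag,\tilde{x})$, each permutation $z_\sigma$ receives weight proportional to $H(x_{\sigma(n+1)},\tilde{x})$. Thus $P_{Z\mid U}=Q_{Z\mid U}$, and Remark \ref{rmk:exact_validity} gives $\PP{p\leq\alpha}\leq\alpha$, which is the marginal coverage statement.

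In fact, the exact match $P_{Z\mid U} = Q_{Z\mid U}$ also yields the stronger conditional statement $\PPst{p\leq\alpha}{U}\leq\alpha$ almost surely. In particular, for any set $E$,
\[
\PP{\{p\leq\alpha\}\cap\{\tilde{X}_{n+1}\in E\}} \leq \alpha\cdot\PP{\tilde{X}_{n+1}\in E}.
\]
To pass from this to conditional coverage on $\{X_{n+1}\in B\}$, I would choose $E = B \setminus \textnormal{bd}_\epsilon(B)$ and decompose
\[
\PP{\{p\leq\alpha\}\cap\{X_{n+1}\in B\}} \leq \PP{\{p\leq\alpha\}\cap\{\tilde{X}_{n+1}\in E\}} + \PP{\{X_{n+1}\in B\}\cap\{\tilde{X}_{n+1}\notin E\}}.
\]
The core technical step is then a pair of triangle-inequality arguments on the event $\{\|X_{n+1}-\tilde{X}_{n+1}\|\leq\epsilon\}$: (i) $\tilde{X}_{n+1}\in E$ forces $X_{n+1}\in B$ (since $\tilde{X}_{n+1}$ is more than $\epsilon$ from $B^c$), so $\PP{\tilde{X}_{n+1}\in E}\leq\PP{X_{n+1}\in B}+\PP{\|X_{n+1}-\tilde{X}_{n+1}\|>\epsilon}$; and (ii) $X_{n+1}\in B$ with $\tilde{X}_{n+1}\notin E$ forces $X_{n+1}\in\textnormal{bd}_{2\epsilon}(B)$, because either $\tilde{X}_{n+1}\in B^c$ (contributing $\epsilon$) or $\tilde{X}_{n+1}\in\textnormal{bd}_\epsilon(B)$ (adding another $\epsilon$ via the triangle inequality). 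Dividing by $\PP{X_{n+1}\in B}$ then produces the stated bound.

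The main obstacle is part (b): identifying the buffer set $E$ that simultaneously (i) sits inside $B$ so that $\{\tilde{X}_{n+1}\in E\}\cap\{\|X_{n+1}-\tilde{X}_{n+1}\|\leq\epsilon\}\subseteq\{X_{n+1}\in B\}$ and (ii) is large enough that its complement, intersected with $\{X_{n+1}\in B\}$ and $\{\|X_{n+1}-\tilde{X}_{n+1}\|\leq\epsilon\}$, is contained in $\textnormal{bd}_{2\epsilon}(B)$. The factor of $2\epsilon$ in the final bound is a direct consequence of this double use of the triangle inequality. Part (a), by contrast, is essentially immediate once the framework is set up, since $Q_{Z\mid U}$ is \emph{exactly} the true conditional under exchangeability.
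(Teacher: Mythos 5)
Your proposal is correct, but it proves the conditional guarantee by a genuinely different route than the paper. The paper applies Theorem \ref{thm:unified} to the law of $Z$ conditioned on $\{X_{n+1}\in B\}$: under that conditioned law the true conditional $P_{Z\mid U}$ picks up extra indicator weights $\One{x_{\sigma(n+1)}\in B}$, and the argument bounds $\dtv(P_{Z,U},Q_{Z,U})$, reaching the intermediate quantity $\EE{\Var(\One{X_{n+1}\in B}\mid \tilde{X}_{n+1})}/\PP{X_{n+1}\in B}$ before performing the same $\epsilon$-boundary case analysis you describe; this TV route is what yields the strictly stronger variance-based guarantee the paper notes after the theorem, and it keeps the proof inside the unified machinery. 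You instead stay with the unconditioned exchangeable law, observe that $P_{Z\mid U}=Q_{Z\mid U}$ exactly (which gives the marginal claim directly, whereas the paper obtains it as the limiting case $B=\Xcal$, $\epsilon\to\infty$), upgrade this to validity conditional on $U$ (legitimate, since $\{\tilde{X}_{n+1}\in E\}$ is $U$-measurable), and then run a purely set-theoretic decomposition with the buffer set $E=B\setminus\textnormal{bd}_\epsilon(B)$; your triangle-inequality claims (i) and (ii) are exactly right, and no TV computation is needed, making the argument more elementary and closer in spirit to the original argument of \citet{hore2023conformal}. One bookkeeping point to tighten: chaining your bounds naively gives $\alpha\PP{\tilde{X}_{n+1}\in E}\leq \alpha\PP{X_{n+1}\in B}+\alpha\PP{\|X_{n+1}-\tilde{X}_{n+1}\|>\epsilon}$ plus a second full $\PP{\|X_{n+1}-\tilde{X}_{n+1}\|>\epsilon}$ from the other term, i.e.\ a factor $1+\alpha$ on that probability, which is slightly weaker than the stated constant; splitting the event $\{\|X_{n+1}-\tilde{X}_{n+1}\|>\epsilon\}$ across $\{\tilde{X}_{n+1}\in E\}$ and its complement and using $\alpha\leq 1$ recovers exactly $\alpha\PP{X_{n+1}\in B}+\PP{\|X_{n+1}-\tilde{X}_{n+1}\|>\epsilon}+\PP{X_{n+1}\in\textnormal{bd}_{2\epsilon}(B)}$, hence the theorem as stated. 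The trade-off between the two approaches is that yours is shorter and avoids TV distances, while the paper's produces the conditional-variance refinement as a byproduct.
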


To interpret the second result, we see that as long as the kernel $H$ is
strongly localizing (i.e., $\|X_{n+1} - \tilde{X}_{n+1}\|$ is likely to be
small), an approximate coverage guarantee holds when we condition on $X_{n+1}\in
B$ for any set $B$ that satisfies some regularity conditions (so that its
boundary does not contain too much mass). In fact, the proof given later will
show that the unified framework allows us to establish a strictly stronger
result: 
\[
\PPst{Y_{n+1}\in\Ccal(X_{n+1})}{X_{n+1}\in B} \geq 1-\alpha -
\frac{\EE{\Var(\One{X_{n+1}\in B} \mid \tilde{X}_{n+1})}}{\PP{X_{n+1}\in B}}. 
\]

\subsubsection{View from the unified framework}

We describe how RLCP fits into the unified conformal framework.

\paragraph{Choices of $U$ and $Q_{Z\mid U}$.}

Define $U=(\Zbag,\tilde{X}_{n+1})$. Note that $U$ here contains additional
information beyond the bag of data. Overloading notation, we write the score
function for $u=(\zbag,\tilde{x})$ as
\[
\score(z,u) = \score(z_{n+1},\zbag).
\]
Next define, for $z=((x_1,y_1),\dots,(x_{n+1},y_{n+1}))$, 
\[
Q_{Z\mid U=(\zbag,\tilde{x})} = \frac{\sum_{\sigma\in\Scal_{n+1}}
  H(x_{\sigma(n+1)},\tilde{x}) \cdot \delta_{z_\sigma}}
{\sum_{\sigma\in\Scal_{n+1}} H(x_{\sigma(n+1)},\tilde{x})}. 
\]
This is the weighted empirical distribution which places weight proportional to
$H(x_{\sigma(n+1)},\tilde{x})$ on each $z_\sigma$ (i.e., higher weight if
$\tilde{X}_{n+1}=\tilde{x}$ has a higher likelihood given
$X_{n+1}=x_{\sigma(n+1)}$). 

\paragraph{P-value.}

Under these choices, the p-value in \eqref{eqn:unified_pvalue} is
\begin{align*}
p &= \frac{\sum_{\sigma\in\Scal_{n+1}} H(X_{\sigma(n+1)},\tilde{X}_{n+1}) 
  \cdot \One{\score(Z_{\sigma(n+1)},\Zbag) \geq
  \score(Z_{n+1},\Zbag)}}{\sum_{\sigma\in\Scal_{n+1}}
  H(X_{\sigma(n+1)},\tilde{X}_{n+1})} \\
&= \frac{\sum_{i=1}^{n+1} H(X_i,\tilde{X}_{n+1}) \cdot \One{\score(Z_i,\Zbag) 
  \geq \score(Z_{n+1},\Zbag)}}{\sum_{i=1}^{n+1} H(X_i,\tilde{X}_{n+1})}, 
\end{align*}
where the second line follows from a similar simplification as in the WCP
case. To see that this corresponds to RLCP, observe that by Lemma
\ref{lem:pvalue_vs_quantile},  
\[
p > \alpha \iff \score(Z_{n+1},\Zbag) \leq \Quantile_{1-\alpha}\bigg(
\frac{\sum_{i=1}^{n+1} H(X_i,\tilde{X}_{n+1}) \cdot
  \delta_{\score(Z_i,\Zbag)}}{\sum_{i=1}^{n+1} H(X_i,\tilde{X}_{n+1})} \bigg).  
\]
The right-hand side above can be directly seen to be equivalent to the event
$Y_{n+1} \in \Ccal(X_{n+1})$ for the RLCP set in \eqref{eqn:rlcp}. 

\paragraph{Validity.}

We prove Theorem \ref{thm:rlcp} using the unified result in Theorem
\ref{thm:unified}. 

\begin{proof}[Proof of Theorem \ref{thm:rlcp} via the unified framework]
Note that the marginal guarantee is implied by the approximate conditional
guarantee: we can simply take $B=\Xcal$ and let $\epsilon\to\infty$. Thus from
this point on, we focus on the conditional guarantee. 

First let $P^*_Z$ be the distribution of $Z=((X_1,Y_1),\dots,(X_{n+1},Y_{n+1}))$
(which we assume is exchangeable), and let $P_Z$ be the distribution of $Z$
conditional on the event $X_{n+1} \in B$. In other words, the conditional
coverage probability can be written as 
\[
\Ppst{P^*_Z}{Y_{n+1}\in\Ccal(X_{n+1})}{X_{n+1}\in B} = \Pp{P_Z}{Y_{n+1}\in
  \Ccal(X_{n+1})}. 
\]
Let $u=(\zbag,\tilde{x})$, and write $z=((x_1,y_1),\dots,(x_{n+1},y_{n+1}))$. 
Then 
\[
P_{Z\mid U=(\zbag,\tilde{x})} = \frac{\sum_{\sigma\in\Scal_{n+1}}
  H(x_{\sigma(n+1)},\tilde{x}) \cdot \One{x_{\sigma(n+1)}\in B} \cdot
  \delta_{z_\sigma}}{\sum_{\sigma\in\Scal_{n+1}} H(x_{\sigma(n+1)},\tilde{x})
  \cdot \One{x_{\sigma(n+1)} \in B}},
\]
by using the fact that $\d{P}_Z(z_\sigma) / \d{P}_Z(z) = \One{x_{\sigma(n+1)}\in 
  B}$ (for $x_{n+1}\in B$), and recalling that $\tilde{X}_{n+1}$ is drawn
from a distribution with density $H(X_{n+1},\cdot)$. Therefore,
\begin{align*}
\dtv(&P_{Z\mid U=(\zbag,\tilde{x})},Q_{Z\mid U=(\zbag,\tilde{x})}) \\ 
&\leq \sum_{\sigma\in\Scal_{n+1}} \Bigg( \frac{H(x_{\sigma(n+1)},\tilde{x}) \cdot 
  \One{x_{\sigma(n+1)}\in B}}{\sum_{\sigma'\in\Scal_{n+1}} H(x_{\sigma'(n+1)},
  \tilde{x}) \cdot \One{x_{\sigma'(n+1)}\in B}} - \frac{H(x_{\sigma(n+1)},
  \tilde{x})}{\sum_{\sigma'\in\Scal_{n+1}} H(x_{\sigma'(n+1)},\tilde{x})} 
  \Bigg)_+ \\
&= \sum_{i=1}^{n+1} \Bigg( \frac{H(x_i,\tilde{x}) \cdot \One{x_i\in B}}
  {\sum_{i'=1}^{n+1} H(x_{i'},\tilde{x}) \cdot \One{x_{i'}\in B}} - 
  \frac{H(x_i,\tilde{x})}{\sum_{i'=1}^{n+1 }H(x_{i'},\tilde{x})} \Bigg)_+ \\ 
&= \sum_{i=1}^{n+1} H(x_i,\tilde{x}) \cdot \One{x_i\in B} \cdot 
  \Bigg(\frac{1}{\sum_{i'=1}^{n+1} H(x_{i'},\tilde{x})\cdot \One{x_{i'}\in B}} 
  - \frac{1}{\sum_{i'=1}^{n+1} H(x_{i'},\tilde{x})} \Bigg) \\ 
&= 1 - \frac{\sum_{i=1}^{n+1} H(x_i,\tilde{x}) \cdot \One{x_i\in B}}
  {\sum_{i=1}^{n+1} H(x_i,\tilde{x})} = \frac{\sum_{i=1}^{n+1} H(x_i,\tilde{x})
  \cdot \One{x_i\not\in B}}{\sum_{i=1}^{n+1} H(x_i,\tilde{x})}. 
\end{align*}
Next let $P_U$ be the marginal distribution of $U=(\Zbag,\tilde{X}_{n+1})$ under
$P_{Z,U}$, and let $Q_U = P_U$. To bound $\dtv(P_{Z,U},Q_{Z,U})$, as $P_U=Q_U$,
we have $\dtv(P_{Z,U},Q_{Z,U}) = \Ep{P_U}{\dtv(P_{Z\mid U},Q_{Z\mid U})}$, and
so 
\begin{align*}
\dtv(P_{Z,U},Q_{Z,U}) 
&\leq \Ep{P_Z}{\frac{\sum_{i=1}^{n+1} H(X_i,\tilde{X}_{n+1}) \cdot 
  \One{X_i\not\in B}}{\sum_{i=1}^{n+1} H(X_i,\tilde{X}_{n+1})}} \\
&= \Epst{P^*_Z}{\frac{\sum_{i=1}^{n+1} H(X_i,\tilde{X}_{n+1}) \cdot 
  \One{X_i\not\in B}}{\sum_{i=1}^{n+1} H(X_i,\tilde{X}_{n+1})}}{X_{n+1}\in B}, 
\end{align*}
by definition of $P_Z$ as compared to $P^*_Z$. We can rewrite this as 
\begin{align*}
\dtv(P_{Z,U},Q_{Z,U})  
&= \frac{\Ep{P^*_Z}{\frac{\sum_{i=1}^{n+1} H(X_i,\tilde{X}_{n+1}) \cdot 
  \One{X_i\not\in B}}{\sum_{i=1}^{n+1} H(X_i,\tilde{X}_{n+1})} \cdot 
  \One{X_{n+1}\in B}}}{\Pp{P^*_Z}{X_{n+1}\in B}} \\
&= \frac{\Ep{P^*_Z}{\Ppst{P^*_Z}{X_{n+1}\not\in B}{\Zbag,\tilde{X}_{n+1}} \cdot 
  \Ppst{P^*_Z}{X_{n+1}\in B}{\Zbag,\tilde{X}_{n+1}}}}{\Pp{P^*_Z}{X_{n+1}\in B}}, 
\end{align*}
where the last step holds using similar calculations to the above to
characterize $Z \mid U$, this time applied to $X_{n+1} \mid U$. In particular,
using the variance of a Bernoulli random variable, we can see that 
\begin{align*}
\dtv(P_{Z,U},Q_{Z,U})  
&= \frac{\Ep{P^*_Z}{\Var_{P^*_Z}\left(\One{X_{n+1}\in B}\mid \Zbag,
  \tilde{X}_{n+1}\right)}}{\Pp{P^*_Z}{X_{n+1}\in B}} \\
&\leq \frac{\Ep{P^*_Z}{\Var_{P^*_Z}\left(\One{X_{n+1}\in B}\mid 
  \tilde{X}_{n+1}\right)}}{\Pp{P^*_Z}{X_{n+1}\in B}},
\end{align*}
where the inequality holds by the law of total variance.

To complete the proof, we need to show that this last expression results in the
bound claimed in the theorem. First, on the event $\tilde{X}_{n+1}\in B^c \cup
\textnormal{bd}_{\epsilon}(B)$, 
\[
\Ppst{P^*_Z}{X_{n+1}\in B}{\tilde{X}_{n+1}} \leq \Ppst{P^*_Z}{X_{n+1}\in
  \textnormal{bd}_{2\epsilon}(B) \;\, \textnormal{or} \;\,
  \|X_{n+1}-\tilde{X}_{n+1}\|>\epsilon}{\tilde{X}_{n+1}}. 
\]
On the other hand, on the event $\tilde{X}_{n+1}\in B\setminus
\textnormal{bd}_{\epsilon}(B)$, 
\[
\Ppst{P^*_Z}{X_{n+1}\not\in B}{\tilde{X}_{n+1}} \leq
  \Ppst{P^*_Z}{\|X_{n+1}-\tilde{X}_{n+1}\|>\epsilon}{\tilde{X}_{n+1}}.
\] 
Using $p(1-p) \leq \min\{p,1-p\}$, and combining the above bounds,  
\begin{align*}
\Var_{P^*_Z}&\left(\One{X_{n+1}\in B} \mid \tilde{X}_{n+1}\right) \\
&\qquad \leq \Ppst{P^*_Z}{X_{n+1} \in \textnormal{bd}_{2\epsilon}(B)\;\,
 \textnormal{or} \;\, \|X_{n+1}-\tilde{X}_{n+1}\|>\epsilon}{\tilde{X}_{n+1}} \\
&\qquad \leq \Ppst{P^*_Z}{X_{n+1}\in \textnormal{bd}_{2\epsilon}(B)}
  {\tilde{X}_{n+1}} + \Ppst{P^*_Z}{\|X_{n+1}-\tilde{X}_{n+1}\|>\epsilon}
  {\tilde{X}_{n+1}}.
\end{align*}
and marginalizing over $\tilde{X}_{n+1}$ completes the proof. 
\end{proof}

\subsection{Generalized weighted conformal prediction}
\label{sec:gwcp}

In two of the settings considered above---standard CP and WCP---the basic
premise is that, after conditioning on the unordered bag of data $\Zbag$, we
know the distribution of the data $Z$ itself. For standard CP, under
exchangeability, this distribution is uniform over the $(n+1)!$ possible
permutations of data vectors, while for WCP the distribution over permutations
is weighted due to distribution shift. A natural generalization is to consider a
setting where we allow the likelihood ratio between any two permutations of the
same dataset to be arbitrary, but still known; this leads to a generalization of
WCP, studied by \citet{prinster2024conformal}.

Specifically, suppose that $Z\in\Zcal^{n+1}$ has joint density
$f:\Zcal^{n+1}\to\R_+$, with respect to any exchangeable base measure. While
$f$ itself may be unknown, suppose that for any $z\in\Zcal^{n+1}$, we can
calculate the ratio  
\[
f(z_\sigma) / f(z).
\]
Returning to the earlier examples, under exchangeability this ratio is simply
equal to $1$, while in the WCP setting \eqref{eqn:wcp_wstar} this ratio is equal
to $w^*(z_{\sigma(n+1)}) / w^*(z_{n+1})$. 

As studied in \citet{nair2023randomization, prinster2024conformal} (building on
earlier work by \cite{fannjiang2022conformal}), an important application of this 
generalized view is the problem of feedback covariate shift (FCS), where data is
collected sequentially. At time $t$, the observed data from past times  
$1,\dots,t-1$ determines a distribution from which the next covariate $X_t$ is 
sampled---for example, this can arise if our aim is to identify regions of
feature space that are most likely to lead to, say, higher response
values. To make this concrete, suppose
\[
X_1 \sim g_{X,1}, \ Y_1 \mid X_1 \sim P_{Y\mid X=X_1},
\]
at time $t=1$, and then for each time $t \geq 2$,
\begin{align*}
X_t &\mid ((X_i,Y_i))_{i=1}^{t-1} \sim \alg\big( ((X_i,Y_i))_{i=1}^{t-1} \big), \\ 
Y_t &\mid \big(((X_i,Y_i))_{i=1}^{t-1}, X_t\big) \sim P_{Y\mid X=X_t}, 
\end{align*}
where $\alg$ is some algorithm mapping the observed data
$((X_i,Y_i))_{i=1}^{t-1}$ to a density for the next covariate $X_t$, relative to
some base measure on $\Xcal$. We can calculate the likelihood ratio by 
\begin{equation}
\label{eqn:fcs_lr}
\frac{f(z_\sigma)}{f(z)} = \frac{g_{X,1}(x_{\sigma(1)}) \cdot
  \prod_{t=2}^{n+1} \big[\alg(z_{\sigma(1)},\dots,z_{\sigma(t-1)})\big]
  (x_{\sigma(t)})}{g_{X,1}(x_1)\cdot \prod_{t=2}^{n+1}
  \big[\alg(z_1,\dots,z_{t-1})\big](x_t)},
\end{equation}
which does not depend on the unknown conditional distribution $P_{Y\mid
  X}$. In other words, here $f(z_\sigma)/f(z)$ is known, and depends only on
choices made by the user.   

\subsubsection{Method and theory}

For arbitrary $y \in \Ycal$, we define as usual $Z^y =
(Z_1,\dots,Z_n,(X_{n+1},y))$, and scores  
\[
S^y_i = \score(Z^y_i, \lbag Z^y\rbag), \quad i\in[n+1],
\]
for a  score function of the form $\score((x,y),\zbag)$. Next
define\footnote{For simplicity, we will assume that the density $f$ is always
  positive so that the ratio is well-defined, but the method can be modified in
  straightforward ways to avoid this condition.}  
\[
W^y_i = \sum_{\sigma : \sigma(n+1)=i} \frac{f(Z^y_\sigma)}{f(Z^y)}, 
\]
noting that by our assumptions above, each of these ratios is known even if the
joint density $f$ is unknown. The generalized WCP set at coverage level
$1-\alpha$ is then defined by
\begin{equation}
\label{eqn:gwcp}
\Ccal(X_{n+1}) = \bigg\{ y\in\Ycal : S^y_{n+1} \leq \Quantile_{1-\alpha}\bigg(
\frac{\sum_{i=1}^{n+1} W^y_i \cdot \delta_{S^y_i}}{\sum_{i=1}^{n+1} W^y_i}
\bigg)\bigg\}.  
\end{equation} 
(To reiterate, both standard CP and WCP can be seen as special cases of this
construction.) When $f(z_\sigma)/f(z)$ is known exactly, for any $z$ and
$\sigma$, generalized WCP has exact coverage. 

\begin{theorem}[\citealt{prinster2024conformal}]
\label{thm:gwcp}
If $Z\in\Zcal^{n+1}$ has density $f$ with respect to an exchangeable base
measure, then the generalized WCP set in \eqref{eqn:gwcp} satisfies $\PP{Y_{n+1}
  \in \Ccal(X_{n+1})} \geq 1-\alpha$. 
\end{theorem}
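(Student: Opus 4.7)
The plan is to embed generalized WCP into the unified framework essentially as in the WCP case, taking $U=\Zbag$ and writing the score, with a slight abuse of notation, as $\score(z,u)=\score(z_{n+1},\zbag)$. The key step is to choose the approximating conditional distribution
\[
Q_{Z\mid U=\zbag} = \frac{\sum_{\sigma\in\Scal_{n+1}} \frac{f(z_\sigma)}{f(z)}\cdot \delta_{z_\sigma}}{\sum_{\sigma\in\Scal_{n+1}} \frac{f(z_\sigma)}{f(z)}}.
\]
Although the density $f$ is unknown in general, this distribution is fully specifiable from the known likelihood ratios $f(z_\sigma)/f(z)$, since the unknown factor $f(z)$ cancels in numerator and denominator.

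Next I would observe that this $Q_{Z\mid U}$ is exactly the true conditional distribution $P_{Z\mid U}$. Indeed, under $P_Z$ with density $f$ relative to an exchangeable base measure, conditional on $\Zbag=\zbag$ the vector $Z$ is supported on the (at most $(n+1)!$) permutations of the bag, and the probability assigned to each permutation $z_\sigma$ is proportional to $f(z_\sigma)$, which is the same as proportional to $f(z_\sigma)/f(z)$. Therefore $Q_{Z\mid U}=P_{Z\mid U}$ almost surely, and Remark \ref{rmk:exact_validity} immediately yields $\PP{p\leq \alpha}\leq \alpha$.

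Finally, I would verify that the unified p-value coincides with the event $Y_{n+1}\in \Ccal(X_{n+1})$ from \eqref{eqn:gwcp}. By the definition in \eqref{eqn:unified_pvalue},
\[
p = \frac{\sum_{\sigma\in\Scal_{n+1}} \frac{f(Z_\sigma)}{f(Z)} \cdot \One{\score(Z_{\sigma(n+1)},\Zbag)\geq \score(Z_{n+1},\Zbag)}}{\sum_{\sigma\in\Scal_{n+1}} \frac{f(Z_\sigma)}{f(Z)}}.
\]
Grouping permutations by the value of $\sigma(n+1)=i$, exactly as in the WCP derivation, the inner indicator depends only on $i$, and the sums of ratios collapse to the weights $W_i = \sum_{\sigma:\sigma(n+1)=i} f(Z_\sigma)/f(Z)$. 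Applying Lemma \ref{lem:pvalue_vs_quantile} then identifies $\{p>\alpha\}$ with the event defining the generalized WCP set in \eqref{eqn:gwcp}, completing the proof.

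I do not anticipate any real obstacle; the work is essentially bookkeeping. The only subtle point worth flagging is the one already emphasized above: although $f$ is unknown, the distribution $Q_{Z\mid U}$ and the weights $W_i$ depend only on the known permutation ratios, so the method is implementable and, since we recover $Q_{Z\mid U}=P_{Z\mid U}$ exactly, the TV term in Theorem \ref{thm:unified} vanishes without any approximation cost.
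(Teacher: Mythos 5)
Your proposal is correct and matches the paper's own argument essentially step for step: same choice $U=\Zbag$, same $Q_{Z\mid U}$ (written via the known ratios $f(z_\sigma)/f(z)$), the same identification $P_{Z\mid U}=Q_{Z\mid U}$ invoking Remark \ref{rmk:exact_validity}, and the same regrouping of permutations by $\sigma(n+1)=i$ plus Lemma \ref{lem:pvalue_vs_quantile} to identify $\{p>\alpha\}$ with the generalized WCP set \eqref{eqn:gwcp}. No substantive differences to report.
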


\subsubsection{View from the unified framework}

We describe how generalized WCP fits into the unified conformal framework.

\paragraph{Choices of $U$ and $Q_{Z\mid U}$.}

Define $U = \Zbag$, and overloading notation again, write the score function for
$u=\zbag$ as     
\[
\score(z,u) = \score(z_{n+1},\zbag).
\]
Next define 
\[
Q_{Z\mid U=\zbag} = \frac{\sum_{\sigma\in\Scal_{n+1}} f(z_\sigma) \cdot
    \delta_{z_\sigma}}{\sum_{\sigma\in\Scal_{n+1}} f(z_\sigma)}.
\]
Note that this can equivalently be written as
\[
Q_{Z\mid U=\zbag} = \frac{\sum_{\sigma\in\Scal_{n+1}} f(z_\sigma) / f(z) \cdot
    \delta_{z_\sigma}}{\sum_{\sigma\in\Scal_{n+1}} f(z_\sigma) / f(z)}.
\]
which can be computed by the analyst, since each ratio $f(z_\sigma)/f(z)$ is
assumed to be known.  

\paragraph{P-value.}

Under these choices, the p-value in \eqref{eqn:unified_pvalue} is 
\begin{align*}
p &= \frac{\sum_{\sigma\in\Scal_{n+1}} f(Z_\sigma) / f(Z) \cdot
  \One{\score(Z_{\sigma(n+1)},\Zbag) \geq \score(Z_{n+1},\Zbag)}}
  {\sum_{\sigma\in\Scal_{n+1}} f(Z_\sigma) / f(Z)} \\
&= \frac{\sum_{i=1}^{n+1} \sum_{\sigma:\sigma(n+1)=i} f(Z_\sigma) / f(Z) \cdot 
  \One{\score(Z_i,\Zbag) \geq \score(Z_{n+1},\Zbag)}}
  {\sum_{i=1}^{n+1} \sum_{\sigma:\sigma(n+1)=i} f(Z_\sigma) / f(Z)} \\ 
&= \frac{\sum_{i=1}^{n+1} W^{Y_{n+1}}_i \cdot
  \One{\score(Z_i,\Zbag) \geq \score(Z_{n+1},\Zbag)}}
  {\sum_{i=1}^{n+1} W^{Y_{n+1}}_i}.
\end{align*}
To see that this corresponds to generalized WCP, observe that by Lemma 
\ref{lem:pvalue_vs_quantile},  
\[
p>\alpha \iff \score(Z_{n+1},\Zbag) \leq \Quantile_{1-\alpha}\bigg(
\frac{\sum_{i=1}^{n+1} W^{Y_{n+1}}_i \cdot \delta_{\score(Z_i,\Zbag)}} 
  {\sum_{i=1}^{n+1} W^{Y_{n+1}}_i} \bigg). 
\]  
The right-hand side above can be directly seen to be equivalent to the event
$Y_{n+1} \in \Ccal(X_{n+1})$, for the generalized WCP set in \eqref{eqn:gwcp}. 

\paragraph{Validity.}

We prove Theorem \ref{thm:gwcp} using the unified result in Theorem
\ref{thm:unified}. 

\begin{proof}[Proof of Theorem \ref{thm:gwcp} via the unified framework]
Recall that $U=\Zbag$. Since we have assumed that $Z$ has joint density $f$ with
respect to some exchangeable base measure, this means that $P_{Z\mid U} =
Q_{Z\mid U}$, by construction. Therefore (recalling Remark
\ref{rmk:exact_validity}), we have $\PP{p\leq \alpha}\leq \alpha$.  
\end{proof}

\section{Extensions of the unified framework}
\label{sec:unified_ext}

In this section, we develop several extensions of the unified framework. These
extensions will be used to derive some of the new results on conformal given in
the next section. All proofs for this section are deferred until the appendix. 

\subsection{Monte Carlo p-values}
\label{sec:unified_mc}

For our first extension, we develop Monte Carlo versions of Theorem
\ref{thm:unified}, which can be useful in settings where the computational cost
of conformal procedures is prohibitive. Specifically, in using Monte Carlo,
when computing the p-value $p$ we can avoid integration with respect to $Q_{Z
  \mid U}$ (which may be computationally hard) by instead sampling with
respect to $Q_{Z \mid U}$ (which may be easier). 

\subsubsection{Simple Monte Carlo}

Given the conditional distribution $Q_{Z\mid U}$, the p-value $p$ defined
in \eqref{eqn:unified_pvalue} for the unified method requires integration with
respect to $Q_{Z\mid U}$: we can rewrite this p-value as 
\[
p = \int_\Zcal \One{\score(z,U) \geq \score(Z,U)} \, \d{Q}_{Z\mid U}(z). 
\]
To avoid calculating such an integral (i.e., to avoid calculating a probability
with respect to $Q_{Z\mid U}$), we can instead take a simple Monte Carlo
approximation,     
\begin{equation}
\label{eqn:unified_pvalue_mc}
\tilde{p} = \frac{1}{M+1} \sum_{m=0}^M \One{\score(Z^{(m)},U) \geq \score(Z,U)},   
\end{equation}
where $Z^{(1)},\dots,Z^{(M)}$ are i.i.d.\ samples from $Q_{Z\mid U}$, with
$(Z^{(1)},\dots,Z^{(M)}) \independent Z \mid U$, and where we let $Z^{(0)} = Z$
for notational simplicity. (Observe that, since $Q_{Z\mid U}$ is supported on
the set $\{z \in \Zcal:\zbag = h(U) = \Zbag\}$, each $Z^{(m)}$ must be equal to
some permutation of $Z$.)

Next we give our guarantee for this Monte Carlo p-value $\tilde{p}$, similar to
that in Theorem \ref{thm:unified} for the original p-value $p$ in
\eqref{eqn:unified_pvalue}. This result is proved in Appendix
\ref{app:proof_thm:unified_mc}.

\begin{theorem}
\label{thm:unified_mc}
Suppose $(Z,U) \sim P_{Z,U}$. Then the Monte Carlo p-value defined in
\eqref{eqn:unified_pvalue_mc} satisfies   
\[
\PP{\tilde{p} \leq \alpha} \leq \alpha + \inf_{Q_U} \, \dtv(P_{Z,U},Q_{Z,U}),
\]
where the infimum is taken over all distributions $Q_U$ on $U$, and where
$Q_{Z,U} = Q_{Z\mid U}\times Q_U$.
\end{theorem}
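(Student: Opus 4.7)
The plan is to mirror the two-step structure of the proof of Theorem \ref{thm:unified}: first establish super-uniformity of $\tilde{p}$ under an idealized law built from $Q_{Z\mid U}$, and then transfer the bound to $P_{Z,U}$ via a total variation comparison on an enlarged sample space.

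For step one, fix $u\in\Ucal$ and suppose $Z\sim Q_{Z\mid U=u}$. Then $Z^{(0)}=Z,Z^{(1)},\dots,Z^{(M)}$ are i.i.d.\ from $Q_{Z\mid U=u}$, so the induced scores $\score(Z^{(m)},u)$ for $m=0,\dots,M$ are exchangeable. The classical rank argument for Monte Carlo p-values, i.e., that the rank of $\score(Z^{(0)},u)$ among $\score(Z^{(0)},u),\dots,\score(Z^{(M)},u)$ is stochastically no smaller than uniform on $\{1/(M+1),\dots,1\}$, yields $\Pp{Q_{Z\mid U=u}}{\tilde{p}\le\alpha}\le\alpha$. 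Averaging over any choice of $Q_U$ then gives $\Pp{Q_{Z,U}}{\tilde{p}\le\alpha}\le\alpha$.

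For step two, observe that $\tilde{p}$ is a measurable function of the augmented tuple $W=(Z,U,Z^{(1)},\dots,Z^{(M)})$. Under the ``true'' law, $(Z,U)\sim P_{Z,U}$ and then $Z^{(1)},\dots,Z^{(M)}$ are drawn i.i.d.\ from the kernel $Q_{Z\mid U}$, independently of $Z$ given $U$; under the ``reference'' law, $(Z,U)\sim Q_{Z,U}$ with the same conditional kernel for the auxiliary samples. Since the conditional kernel used to generate $(Z^{(1)},\dots,Z^{(M)})$ given $U$ is identical under the two laws, a short calculation integrating out $Z^{(1)},\dots,Z^{(M)}$ yields $\dtv(P_W,Q_W)=\dtv(P_{Z,U},Q_{Z,U})$. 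Combining with step one gives
\[
\Pp{P}{\tilde{p}\le\alpha}\le\Pp{Q}{\tilde{p}\le\alpha}+\dtv(P_W,Q_W)\le\alpha+\dtv(P_{Z,U},Q_{Z,U}),
\]
and taking the infimum over $Q_U$ completes the proof.

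The main subtlety is step two: it is crucial that the auxiliary samples $Z^{(m)}$ are drawn from $Q_{Z\mid U}$ (not $P_{Z\mid U}$) under both laws, so that the enlargement from $(Z,U)$ to $W$ does not inflate the TV distance. A secondary point worth flagging is that, unlike in Theorem \ref{thm:unified} where we could sharpen the bound by passing through $(S,T)$, here $\tilde{p}$ depends on the auxiliary randomness through all the $\score(Z^{(m)},U)$, so the natural bound comes out in terms of $\dtv(P_{Z,U},Q_{Z,U})$ rather than $\dtv(P_{S,T},Q_{S,T})$; however, information monotonicity from Remark \ref{rmk:information_monotonicity} still lets one weaken to TV distances on any coarser summary of $(Z,U)$ if convenient in applications.
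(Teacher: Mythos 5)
Your proof is correct. It differs from the paper's route mainly in packaging: the paper obtains Theorem \ref{thm:unified_mc} as the special case $R_{Z\mid U}=Q_{Z\mid U}$ of the importance-sampling result (Theorem \ref{thm:unified_is}), whose proof re-casts $\tilde{p}$ as the \emph{original} unified p-value for an enlarged partial information $\tilde{U}=(U,\lbag Z^{(0)},\dots,Z^{(M)}\rbag)$ with a new conditional $Q_{Z\mid\tilde{U}}$, and then invokes Theorem \ref{thm:unified} together with information monotonicity to reduce $\dtv(P_{Z,\tilde{U}},Q_{Z,\tilde{U}})$ to $\dtv(P_{Z,U},Q_{Z,U})$. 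You instead argue directly on the augmented tuple $W=(Z,U,Z^{(1)},\dots,Z^{(M)})$: conditional super-uniformity of $\tilde{p}$ under the reference law $Q_{Z,U}\times(Q_{Z\mid U})^M$ via the classical exchangeable-rank argument (valid with ties, since the indicator uses $\geq$), and then the TV transfer $\dtv(P_W,Q_W)=\dtv(P_{Z,U},Q_{Z,U})$, which holds because the auxiliary draws are generated by the \emph{same} kernel $(Q_{Z\mid U})^M$, depending on $U$ only, under both laws (only the inequality $\leq$ is actually needed, and your phrase ``integrating out'' is really the statement that composing both laws with a common Markov kernel cannot increase TV). The two arguments use the same two ingredients—exchangeability of the $M+1$ draws under the reference law, and invariance of TV under augmentation by a shared kernel—so the mathematics is essentially the paper's; what your direct version buys is a self-contained proof that avoids the $\tilde{U}$ formalism, while the paper's reduction buys generality: it delivers the importance-sampling case simultaneously and keeps the result inside the unified framework, which is what enables the sharper $(S,\tilde{T})$-level refinement recorded in Appendix \ref{app:unified_mc_refine} (so your closing remark that no $(S,T)$-type sharpening is available is slightly too pessimistic, though this does not affect the correctness of your proof).
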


\begin{remark}
\label{rmk:information_monotonicity_mc}
The guarantee above, with Type I error inflation bounded by
$\dtv(P_{Z,U},Q_{Z,U})$, is exactly the same as that derived in Remark
\ref{rmk:information_monotonicity}, which is a corollary to Theorem
\ref{thm:unified}. Indeed the original result of Theorem \ref{thm:unified}, with
Type I error inflation bounded by $\dtv(P_{S,T},Q_{S,T})$, is strictly stronger:
as $(S,T)$ may potentially contain much less information than $(Z,U)$, this TV
distance can be substantially smaller. An analogous refinement can also be
derived for the Monte Carlo case, but we defer this to the appendix; see
Appendix \ref{app:unified_mc_refine}.
\end{remark}

\begin{remark}
\label{rmk:exact_validity_mc}
Just as in Remark \ref{rmk:exact_validity}, if $Q_{Z \mid U} = P_{Z \mid U}$
holds almost surely (i.e., in implementing the Monte Carlo method we correctly
specify the conditional distribution of $Z \mid U$), then by taking $Q_U=P_U$ we
obtain exact Type I error control,       
\[
\PP{\tilde{p} \leq \alpha} \leq \alpha.
\]
\end{remark}

\subsubsection{Importance sampling}

When sampling from $Q_{Z \mid U}$ is itself a difficult task, we can also employ
alternate Monte Carlo schemes based on importance sampling. Let $R_{Z \mid U}$
be any (conditional) proposal distribution. We will assume that $Q_{Z \mid U},
R_{Z \mid U}$ are absolutely continuous with respect to each other, almost 
surely. Given a number of samples $M \geq 1$, and letting $Z^{(0)} = Z$ as
before, we now study an importance sampling scheme to define the p-value:  
\begin{equation}
\label{eqn:unified_pvalue_is}
\tilde{p} = \frac{\sum_{m=0}^M \frac{\d{Q}_{Z \mid U}(Z^{(m)})}{\d{R}_{Z \mid 
  U}(Z^{(m)})} \cdot \One{\score(Z^{(m)},U) \geq \score(Z,U)}}{\sum_{m=0}^M 
\frac{\d{Q}_{Z \mid U}(Z^{(m)})}{\d{R}_{Z \mid U}(Z^{(m)})}},
\end{equation}
where now $Z^{(1)},\dots,Z^{(M)}$ are i.i.d.\ samples from $R_{Z\mid U}$, and
$(Z^{(1)},\dots,Z^{(M)}) \independent Z \mid U$. In the Monte Carlo literature,
this type of construction is often called \emph{self-normalized} importance
sampling. Notice that this can be viewed as a generalization of the Monte Carlo 
procedure above, as the p-value $\tilde{p}$ in \eqref{eqn:unified_pvalue_mc} can
be obtained by simply letting $R_{Z\mid U} = Q_{Z\mid U}$.

We now present our theoretical guarantee for the importance sampling p-value
$\tilde{p}$. This result is proved in Appendix \ref{app:proof_thm:unified_is}. 

\begin{theorem}
\label{thm:unified_is}
Suppose $(Z,U) \sim P_{Z,U}$. Then the importance sampling p-value defined in
\eqref{eqn:unified_pvalue_is} satisfies    
\[
\PP{\tilde{p} \leq \alpha}\leq \alpha + \inf_{Q_U} \, \dtv(P_{Z,U},Q_{Z,U}),
\]
where the infimum is taken over all distributions $Q_U$ on $U$, and where
$Q_{Z,U} = Q_{Z\mid U}\times Q_U$. 
\end{theorem}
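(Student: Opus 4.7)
The plan is to mirror the proof of Theorem \ref{thm:unified}: first show that $\tilde p$ is super-uniform under the ``null'' in which $(Z,U)\sim Q_{Z,U}$ and the auxiliary samples $Z^{(1)},\ldots,Z^{(M)}$ are drawn i.i.d.\ from $R_{Z\mid U}$, and then transfer this bound to the actual distribution via a total variation inequality. The key new ingredient, compared to Theorem \ref{thm:unified_mc}, is to recognize the self-normalized importance sampling statistic as a \emph{weighted rank} of $\score(Z,U)$ inside a bag of $M{+}1$ candidate scores, for which super-uniformity is the same classical fact that powers the validity analysis for WCP in Section \ref{sec:wcp}.

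For the first step, fix $U=u$ and write $w(z) = \d{Q}_{Z\mid U=u}(z)/\d{R}_{Z\mid U=u}(z)$. Under the null, the joint density of $(Z^{(0)},\ldots,Z^{(M)})$ with respect to the product measure $R_{Z\mid U=u}^{M+1}$ is proportional to $w(z^{(0)})$, since only $Z^{(0)}=Z$ is drawn from $Q_{Z\mid U=u}$ while the remaining coordinates come from $R_{Z\mid U=u}$. Consequently, conditionally on $U$ and on the unordered multiset $V=\lbag Z^{(0)},\ldots,Z^{(M)}\rbag$, each $z'\in V$ satisfies $\PPst{Z^{(0)}=z'}{V,U}\propto w(z')$.

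Rewritten as a functional of $(V,Z^{(0)},U)$, the importance sampling p-value in \eqref{eqn:unified_pvalue_is} is precisely the $w$-weighted rank
\[
\tilde p = \frac{\sum_{z'\in V} w(z')\cdot\One{\score(z',U)\geq \score(Z^{(0)},U)}}{\sum_{z'\in V} w(z')}.
\]
A short counting argument --- identical in spirit to the one used in the WCP proof --- then shows that whenever the special element is drawn from $V$ with probabilities proportional to $w$, this weighted rank satisfies $\PPst{\tilde p\leq\alpha}{V,U}\leq \alpha$: sort the elements of $V$ in decreasing order of score; the set of $z'\in V$ for which $\tilde p_{z'}\leq \alpha$ is a prefix of this ordering, and by construction its total $w$-mass is at most $\alpha$ times the total weight. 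Averaging over $V$ and $U$ gives $\Pp{\tilde Q}{\tilde p\leq\alpha}\leq \alpha$, where $\tilde Q$ denotes the joint law of $(Z,U,Z^{(1)},\ldots,Z^{(M)})$ with $(Z,U)\sim Q_{Z,U}$.

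The transfer from $\tilde Q$ to the true joint law $\tilde P$ (in which $(Z,U)\sim P_{Z,U}$ while the auxiliary samples are still drawn from the same kernel given $U$) is then immediate: the standard bound \eqref{eqn:pvalue_dtv_bound} gives $\Pp{\tilde P}{\tilde p\leq\alpha}\leq \alpha + \dtv(\tilde P,\tilde Q)$, and since the conditional law of $(Z^{(1)},\ldots,Z^{(M)})$ given $(Z,U)$ depends only on $U$ and is identical under both joint laws, tensoring with this common Markov kernel preserves TV distance, so $\dtv(\tilde P,\tilde Q)=\dtv(P_{Z,U},Q_{Z,U})$. Taking an infimum over $Q_U$ yields the theorem. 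The main obstacle is the first step --- checking that self-normalizing by the importance weights exactly compensates for sampling from $R_{Z\mid U}$ rather than $Q_{Z\mid U}$, so that the resulting weighted rank retains its null super-uniformity even when the candidate samples come from the proposal rather than the target.
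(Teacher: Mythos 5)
Your proposal is correct, and it shares the same skeleton as the paper's argument---super-uniformity of $\tilde{p}$ under a ``null'' in which $(Z,U)\sim Q_{Z,U}$ and the proposals are drawn from the common kernel $R_{Z\mid U}$, followed by a total variation transfer that costs only $\dtv(P_{Z,U},Q_{Z,U})$ because the auxiliary sampling mechanism given $U$ is identical under both laws. Where you differ is in how the null step is established. The paper proves Theorem \ref{thm:unified_is} by reduction: it enlarges the partial information to $\tilde{U}=(U,\lbag Z^{(0)},\dots,Z^{(M)}\rbag)$, defines $Q_{Z\mid\tilde{U}}$ to be the self-normalized importance-weighted distribution over the multiset, observes that $\tilde{p}$ is then literally the unified p-value of \eqref{eqn:unified_pvalue} for this augmented setup, and invokes Theorem \ref{thm:unified} together with Remark \ref{rmk:information_monotonicity}; the remaining work is to pick $Q_{\tilde{U}}$ induced by $Q_U$ and the sampling kernel and apply information monotonicity. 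You instead argue from scratch: you compute that, under the null, the conditional law of the distinguished element given $(U,V)$ with $V=\lbag Z^{(0)},\dots,Z^{(M)}\rbag$ is proportional to the importance weights, and then show super-uniformity of the resulting weighted rank by the prefix/counting argument (which is just \eqref{eqn:pvalue_superuniform} for a weighted discrete null, in the spirit of Lemma \ref{lem:pvalue_measure}). Each route has a virtue: the paper's reduction makes the refinement to a TV bound at the level of $(S,T)$ (cf.\ Remark \ref{rmk:information_monotonicity_is}) essentially automatic, since the result is an instance of Theorem \ref{thm:unified}; your direct route makes fully explicit the weighted-exchangeability computation that the paper's appendix treats only in passing (its write-up asserts the identification of the conditional law via ``exchangeability,'' which is accurate as stated only when $R_{Z\mid U}=Q_{Z\mid U}$---the general importance-sampling case requires exactly the weighted conditional-law calculation you carry out). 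Your final step, that tensoring $P_{Z,U}$ and $Q_{Z,U}$ with the same kernel preserves total variation, is also correct (and only the inequality $\leq$ is needed).
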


\begin{remark}
\label{rmk:information_monotonicity_is}
Analogous to Remark \ref{rmk:information_monotonicity_mc} for the simple Monte
Carlo setting, refinements of this result are again possible, with an inflation
in Type I error bounded by a TV distance between the (joint) distributions of
scores and threshold, but we omit the details.
\end{remark}

\begin{remark}
\label{rmk:exact_validity_is}
As in Remark \ref{rmk:exact_validity_mc} for simple Monte Carlo, if $Q_{Z\mid U}
= P_{Z\mid U}$, almost surely, then we obtain exact Type I error control,
\[
\PP{\tilde{p} \leq \alpha} \leq \alpha.
\] 
\end{remark}

\subsection{Unnormalized $Q_{Z \mid U}$}
\label{sec:unified_un}

For our second extension, we will develop an unnormalized version of Theorem
\ref{thm:unified}. Returning to the construction of the p-value, we now allow
$Q_{Z \mid U = u}$ to be measure on $\Zcal$, for each $u\in\Ucal$; the
difference here is that $Q_{Z\mid U=u}$ is no longer required to be a
distribution---that is, we may have $Q_{Z\mid U=u}(\Zcal)\neq 1$. However, as
before, we assume that $Q_{Z\mid U=u}$ is supported on the finite set
$\{z\in\Zcal^{n+1} : \zbag = h(u)\}$, for each $u$.

We then define   
\begin{equation}
\label{eqn:unified_pvalue_un}
p = p(Z,U) \quad \text{where} \quad p(z,u) = \int_\Zcal \One{\score(z',u) \geq 
  \score(z,u)} \, \d{Q}_{Z \mid U=u}(z'). 
\end{equation}
If $Q_{Z \mid U=u}$ is a distribution for each $u \in \Ucal$, then note this is
the same as the original p-value constructed in \eqref{eqn:unified_pvalue}. 
In the general case, we still refer to \eqref{eqn:unified_pvalue_un} as a 
p-value. We will also define a generalized threshold 
\[
\thresh_\alpha(u) = \inf \bigg\{ s \in \R : \int_\Zcal \One{\score(z,u) > s}
\, \d{Q}_{Z \mid U=u}(z) \leq \alpha \bigg\},
\]
which reduces to $\thresh_\alpha(u) = \Quantile_{1-\alpha}(Q_{S\mid U=u})$ when
$Q_{Z \mid U=u}$ is a distribution. Lastly, given a distribution $Q_U$ on
$\Ucal$, we write $Q_{Z,U} = Q_{Z\mid U}\times Q_U$ for the measure on
$(Z,U)\in\Zcal\times\Ucal$ that is defined as\footnote{Formally, in order for
  this to be well-defined, we require that the function $u\mapsto Q_{Z\mid
    U=u}(\{z :(z,u)\in A\})$ is measurable, for each measurable 
  $A\subseteq\Zcal\times\Ucal$; this condition is naturally satisfied in the
  normalized case, where $Q_{Z\mid U}$ is a conditional distribution.}  
\[
Q_{Z,U}(A) = \Ep{Q_U}{\int_\Zcal \One{(z,U)\in A}\;\d{Q_{Z\mid U}}(z)},
\]
for measurable $A \subseteq \Zcal\times\Ucal$. In the normalized case, where
$Q_{Z\mid U}$ is chosen to be a conditional distribution, note that this is
simply the joint distribution with marginal $Q_U$ and conditional $Q_{Z\mid U}$,
as before.

We are now ready to present the unnormalized generalization of Theorem
\ref{thm:unified}. Its proof is similar to that of Theorem \ref{thm:unified},
and is given in Appendix \ref{app:unified_un}.  

\begin{theorem}
\label{thm:unified_un}
Suppose $(Z,U) \sim P_{Z,U}$. Let $P_{S,T}$ be the induced joint distribution on
$(S,T)=(\score(Z,U),\thresh_\alpha(U))$. Then the p-value defined in
\eqref{eqn:unified_pvalue_un} satisfies   
\[
\PP{p \leq \alpha} \leq \alpha + \inf_{Q_U} \, \sup_{A\subseteq\R\times\R} \,
\Big\{ P_{S,T}(A) - Q_{S,T}(A) \Big\},   
\]
where the infimum is taken over all distributions $Q_U$ on $U$, and where
$Q_{S,T}$ denotes the joint measure on
$(S,T)=(\score(Z,U),\thresh_\alpha(U))\in\R\times\R$ induced by the joint
measure $Q_{Z,U} = Q_{Z\mid U}\times Q_U$ on $(Z,U)\in\Zcal\times\Ucal$. 
\end{theorem}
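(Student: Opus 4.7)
The plan is to mirror the proof of Theorem \ref{thm:unified} step by step, adapting each stage to the unnormalized setting where $Q_{Z\mid U=u}$ is only a finite measure rather than a probability distribution. The critical modification is that $Q_{S,T}$ is also only a finite measure, so TV distance must be replaced by the one-sided supremum $\sup_A \{P_{S,T}(A) - Q_{S,T}(A)\}$; I expect this replacement to come essentially for free once the internal p-value machinery is properly generalized.

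The hardest part will be re-establishing the equivalence $p(z,u) \leq \alpha \iff \score(z,u) > \thresh_\alpha(u)$ without being able to invoke Lemma \ref{lem:pvalue_vs_quantile}. To do this, I would define the non-increasing, right-continuous function $F_u(s) = \int \One{\score(z',u) > s} \, \d{Q}_{Z\mid U=u}(z')$, so that $\thresh_\alpha(u) = \inf\{s : F_u(s) \leq \alpha\}$ and $p(z,u) = \lim_{s \uparrow \score(z,u)} F_u(s)$. Because $Q_{Z\mid U=u}$ is supported on a finite set, $F_u$ is piecewise constant with only finitely many values, which yields two facts: the infimum in the definition of $\thresh_\alpha$ is attained, so $F_u(\thresh_\alpha(u)) \leq \alpha$; and there is a gap $\beta_u > \alpha$ such that $F_u(s) \geq \beta_u$ for every $s < \thresh_\alpha(u)$. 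Combined with monotonicity of $F_u$, these give both directions of the equivalence.

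With the equivalence in hand, the rest follows the structure of the original proof. First, the very definition of $\thresh_\alpha(u)$ gives $\int \One{\score(z,u) > \thresh_\alpha(u)} \, \d{Q}_{Z\mid U=u}(z) \leq \alpha$ for every $u$. Integrating against any probability measure $Q_U$ and unfolding the definitions of $Q_{Z,U}$ and $Q_{S,T}$ yields $Q_{S,T}(\{(s,t) : s > t\}) \leq \alpha$. Then, by the equivalence applied under $P_{Z,U}$, we have $\PP{p \leq \alpha} = P_{S,T}(\{(s,t) : s > t\})$, and the trivial bound
\[
P_{S,T}(A_\star) \leq Q_{S,T}(A_\star) + \sup_{A \subseteq \R \times \R} \{P_{S,T}(A) - Q_{S,T}(A)\}, \qquad A_\star = \{(s,t) : s > t\},
\]
combined with $Q_{S,T}(A_\star) \leq \alpha$ and taking the infimum over $Q_U$, produces the claimed inequality. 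This recovers Theorem \ref{thm:unified} in the normalized case, since there $\sup_A \{P_{S,T}(A) - Q_{S,T}(A)\} = \dtv(P_{S,T},Q_{S,T})$.
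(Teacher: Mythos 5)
Your proposal is correct, and its overall architecture matches the paper's: establish the pointwise equivalence $p(z,u)\leq\alpha \iff \score(z,u) > \thresh_\alpha(u)$, show that the $Q$-measure of the event $\{S>T\}$ is at most $\alpha$, and then pass from $Q_{S,T}$ to $P_{S,T}$ via the one-sided supremum $\sup_A\{P_{S,T}(A)-Q_{S,T}(A)\}$, which is exactly how the paper absorbs the loss of normalization. Where you differ is in the internal machinery: the paper routes the argument through Lemma \ref{lem:pvalue_measure} (the Harrison-type superuniformity statement for unnormalized measures) together with Lemma \ref{lem:p_vs_q_measure} (whose proof it omits as analogous to Lemma \ref{lem:pvalue_vs_quantile}), whereas you bypass the superuniformity lemma entirely—observing that $\int \One{\score(z,u)>\thresh_\alpha(u)}\,\d{Q}_{Z\mid U=u}(z) = F_u(\thresh_\alpha(u)) \leq \alpha$ follows directly from attainment of the infimum defining $\thresh_\alpha(u)$—and you prove the equivalence by hand via the piecewise-constant function $F_u$, using finiteness of the support both for attainment and for the gap $\beta_u>\alpha$ below the threshold. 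Your $F_u$ argument is in effect a proof of the omitted Lemma \ref{lem:p_vs_q_measure} (specialized to the induced score measure), so your route is more self-contained and slightly more direct, while the paper's route makes the parallel with the normalized Theorem \ref{thm:unified} and its two supporting lemmas most transparent; both are valid, and your handling of the two delicate points (right-continuity/attainment at the threshold, and the strict gap needed so that the left limit stays above $\alpha$) is exactly where the finite-support assumption is genuinely used.
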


When $Q_{Z \mid U}$ is a conditional distribution, this reduces to the bound in
Theorem \ref{thm:unified}, since then $Q_{S,T}$ is a distribution and so we have
$\dtv(P_{S,T},Q_{S,T}) = \sup_{A\subseteq\R\times\R} \{P_{S,T}(A) -
Q_{S,T}(A)\}$. Before moving on to discuss new results in the next section, we 
make one further remark about information monotonicity. 

\begin{remark}
\label{rmk:information_monotonicity_un}
Just as before in Remark \ref{rmk:information_monotonicity}, information
monotonicity implies simpler but coarser versions of the bound in Theorem
\ref{thm:unified_un}. For example, because $(S,T)$ is a function of $(Z,U)$, we  
have the weaker bound
\[
\PP{p \leq \alpha} \leq \alpha + \inf_{Q_U} \, \sup_{A\subseteq\Zcal\times\Ucal}
\, \Big\{P_{Z,U}(A) - Q_{Z,U}(A)\Big\}. 
\]
\end{remark}

\section{Special cases: new results}
\label{sec:cases_new}

We now present new results which can be derived as special cases of the unified
framework for conformal prediction. We will focus on developing extensions of the 
conformal approaches and theory in Section \ref{sec:cases_known}; our intention
to provide a flavor of the types of extensions possible and not to derive an 
exhaustive set of new results. Some of these examples will rely on the
extensions of the unified theory from Section \ref{sec:unified_ext}. 

\subsection{WCP with unnormalized weights}
\label{sec:wcp_un}

We return to weighted conformal prediction (WCP) as considered in Section
\ref{sec:wcp}, for handling distribution shift. As before, we will assume that
the training data $Z_1,\dots,Z_n$ are  i.i.d.\ from $\Ptr$, the test point is
drawn independently from $\Pte$, and we have access to a weight function $w$
that approximates the likelihood ratio in \eqref{eqn:wcp_wstar}. Recall, the WCP
set \eqref{eqn:wcp} is defined by placing a weight on point $i$ which is
proportional to $w(Z_i)$, where the weights are normalized to sum to 1. Here we
study an alternative construction, where the weights are applied without such a  
normalization step, and we will derive a validity guarantee using the
unnormalized theory from Section \ref{sec:unified_un}.  

\subsubsection{Method and theory}

Fix any score function $\score((x,y),\zbag)$. Defining the data points $Z^y_i$
and scores $S^y_i$ exactly as in Section \ref{sec:wcp}, the unnormalized WCP set
at level $1-\alpha$ is defined by 
\begin{equation}
\label{eqn:wcp_un}
\Ccal(X_{n+1}) = \Bigg\{ y \in \Ycal: S^y_{n+1} \leq \inf\bigg\{s\in\R :
\frac{\sum_{i=1}^{n+1} w(Z^y_i)\cdot\One{S^y_i>s}  }{n+1}\leq \alpha\bigg\}
\Bigg\}.   
\end{equation}
To compare to the usual (normalized) WCP method, we can observe that the
original WCP set \eqref{eqn:wcp} can equivalently be defined as 
\begin{equation}
\label{eqn:wcp_alt}
\Ccal(X_{n+1}) = \Bigg\{ y \in \Ycal: S^y_{n+1} \leq \inf\bigg\{s\in\R :
\frac{\sum_{i=1}^{n+1} w(Z^y_i)\cdot\One{S^y_i>s}}{\sum_{i=1}^{n+1}w(Z^y_i)}
\leq \alpha\bigg\} \Bigg\},  
\end{equation}
since this infimum is simply equal to the quantile,
\begin{align*}
&\inf\bigg\{s\in\R : \frac{\sum_{i=1}^{n+1} 
  w(Z^y_i)\cdot\One{S^y_i>s}}{\sum_{i=1}^{n+1}w(Z^y_i)}  \leq \alpha\bigg\} \\
&= \inf\bigg\{s\in\R : \frac{\sum_{i=1}^{n+1} w(Z^y_i)\cdot\One{S^y_i\leq s}} 
  {\sum_{i=1}^{n+1}w(Z^y_i)}  \geq 1-\alpha\bigg\} 
= \Quantile_{1-\alpha}\bigg(\frac{\sum_{i=1}^{n+1} w(Z^y_i)\cdot 
  \delta_{S^y_i}}{\sum_{i=1}^{n+1}w(Z^y_i)}\bigg).
\end{align*}
The only difference between the unnormalized WCP method in \eqref{eqn:wcp_un},
and the original WCP method as expressed in \eqref{eqn:wcp_alt}, is that the
former divides each weight $w(Z^y_i)$ by $n+1$ rather than
\smash{$\sum_{j=1}^{n+1} w(Z^y_j)$}. To motivate this, notice that for the true
likelihood ratio $w^* = \d\Pte/\d\Ptr$, we have $\EE{w^*(Z_i)}=1$ for each  
$i\in[n]$, and so the original denominator \smash{$\sum_{j=1}^{n+1}w(Z^y_j)$}
has expected value approximately $n+1$. 

The unnormalized WCP method has guarantees analogous to that for WCP.

\begin{theorem}
\label{thm:wcp_un}
For independent $Z_1,\dots,Z_n \sim \Ptr$ and $Z_{n+1} \sim \Pte$, where as
before $w^* = \d\Pte/\d\Ptr$, the unnormalized WCP set in \eqref{eqn:wcp_un}
satisfies the following:  

\begin{itemize}[align=left,labelwidth={1.25em}]
\item[(a)] If $w = w^*$, then $\PP{Y_{n+1} \in \Ccal(X_{n+1})} \geq 1-\alpha$. 

\item[(b)] For any $w$, we have $\PP{Y_{n+1} \in \Ccal(X_{n+1})} \geq 1-\alpha -  
\Ep{\Ptr}{(w^*(X,Y) - w(X,Y))_+}$.
\end{itemize}
\end{theorem}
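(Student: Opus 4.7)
The plan is to apply the unnormalized unified framework of Theorem \ref{thm:unified_un}, mirroring the WCP analysis from Section \ref{sec:wcp}. I would take $U = \Zbag$ and write the score (with the usual overloading) as $\score(z,\zbag) = \score(z_{n+1},\zbag)$. The natural unnormalized choice, suggested by the factor $\frac{1}{n+1}$ in the definition \eqref{eqn:wcp_un}, is the measure
\[
Q_{Z \mid U=\zbag} = \frac{1}{(n+1)!} \sum_{\sigma \in \Scal_{n+1}} w(z_{\sigma(n+1)}) \cdot \delta_{z_\sigma},
\]
which has total mass $\frac{1}{n+1}\sum_i w(z_i)$ rather than $1$.

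The first step would be to verify that this choice recovers the unnormalized WCP set. The p-value \eqref{eqn:unified_pvalue_un} under this measure simplifies, using that there are $n!$ permutations with $\sigma(n+1) = i$, to
\[
p = \frac{1}{n+1} \sum_{i=1}^{n+1} w(Z_i) \cdot \One{\score(Z_i,\Zbag) \geq \score(Z_{n+1},\Zbag)}.
\]
By the equivalence \eqref{eqn:pvalue_vs_quantile} linking the p-value to the generalized threshold $\thresh_\alpha$, the event $p > \alpha$ coincides with the event that $S^{Y_{n+1}}_{n+1}$ lies at or below the infimum defining $\Ccal(X_{n+1})$ in \eqref{eqn:wcp_un}, i.e., with $Y_{n+1} \in \Ccal(X_{n+1})$. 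So a bound on $\PP{p \leq \alpha}$ translates directly into a coverage guarantee.

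Next I would apply Theorem \ref{thm:unified_un} together with the coarser information-monotonicity bound of Remark \ref{rmk:information_monotonicity_un}. Choose $Q_U$ to be the pushforward of $\Ptr^{n+1}$ under $z \mapsto \zbag$. Since $U$ is a deterministic function of $Z$, $\sup_A \{P_{Z,U}(A) - Q_{Z,U}(A)\}$ reduces to $\sup_B \{P_Z(B) - Q_Z(B)\}$ over $B \subseteq \Zcal^{n+1}$. The true marginal $P_Z = \Ptr^n \times \Pte$ has density $w^*(z_{n+1})$ with respect to $\Ptr^{n+1}$. For $Q_Z$, I would compute
\[
Q_Z(A) = \Ep{Z \sim \Ptr^{n+1}}{\frac{1}{(n+1)!} \sum_{\sigma \in \Scal_{n+1}} w(Z_{\sigma(n+1)}) \cdot \One{Z_\sigma \in A}}
\]
and then, via the change of variables $\tilde Z = Z_\sigma$ (justified by exchangeability of $\Ptr^{n+1}$), collapse the sum to obtain $Q_Z(A) = \Ep{Z \sim \Ptr^{n+1}}{w(Z_{n+1}) \cdot \One{Z \in A}}$, i.e., the measure with density $w(z_{n+1})$ relative to $\Ptr^{n+1}$.

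From here the remainder is a direct calculation: the signed difference $P_Z - Q_Z$ has density $w^*(z_{n+1}) - w(z_{n+1})$ against $\Ptr^{n+1}$, so $\sup_B \{P_Z(B) - Q_Z(B)\}$ equals $\Ep{\Ptr}{(w^*(X,Y) - w(X,Y))_+}$, yielding part (b); part (a) is then immediate upon setting $w = w^*$. The main subtlety is really the bookkeeping in choosing the unnormalized measure so that its mass is $\frac{1}{n+1}\sum_i w(z_i)$ rather than $\sum_i w(z_i)$, so that the induced p-value matches the $n+1$ denominator in \eqref{eqn:wcp_un} instead of a normalized WCP-style quantile; everything else is a streamlined version of the standard WCP derivation in Section \ref{sec:wcp}.
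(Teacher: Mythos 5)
Your proposal is correct and follows essentially the same route as the paper's proof: the same unnormalized measure $Q_{Z\mid U=\zbag} = \frac{1}{(n+1)!}\sum_{\sigma} w(z_{\sigma(n+1)})\,\delta_{z_\sigma}$, the same choice of $Q_U$ as the pushforward of $\Ptr^{n+1}$, the same exchangeability computation giving $Q_Z$ density $w(z_{n+1})$ against $\Ptr^{n+1}$, and the same $(w^*-w)_+$ bound via Theorem \ref{thm:unified_un} and Remark \ref{rmk:information_monotonicity_un}. The only nitpick is that the p-value/threshold equivalence in the unnormalized setting should be cited as Lemma \ref{lem:p_vs_q_measure} rather than \eqref{eqn:pvalue_vs_quantile}, which is stated for probability distributions; the substance of your argument is unaffected.
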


To take a closer look at part (b) of this result, note also that if
$\Ep{\Ptr}{w(X,Y)} = 1$ (the user-specified likelihood ratio estimate $w$ is
normalized at the population level), then
\[
\PP{Y_{n+1} \in \Ccal(X_{n+1})} \geq 1-\alpha - \frac{1}{2}\Ep{\Ptr}{|w^*(X,Y) -
  w(X,Y)|},
\] 
since, for any random variable $V\in\R$ with $\EE{V}=0$, we have $\EE{(V)_+} =
\EE{(V)_-} = \frac{1}{2}\EE{|V|}$. This result now appears identical to the
guarantee of Theorem \ref{thm:wcp} part (b), for the normalized case. But there 
is a subtle difference: here $w$ must already be normalized (at the
population), whereas for normalized WCP, the result always holds (since it
applies to $\bar{w}$ in place of $w$).    

\subsubsection{View from the unified framework}

We describe how unnormalized WCP fits into the unified conformal framework.

\paragraph{Choices of $U$ and $Q_{Z\mid U}$.}

Define $U = \Zbag$, and write the score function for $u=\zbag$ as    
\[
\score(z,u) = \score(z_{n+1},\zbag).
\]
Next define 
\[
Q_{Z \mid U=\zbag} = \frac{\sum_{\sigma \in \Scal_{n+1}} w(z_{\sigma(n+1)})
  \cdot \delta_{z_\sigma}}{(n+1)!},
\]
which is a measure (not necessarily a distribution), supported on permutations
of $z$.

\paragraph{P-value.}

Under these choices, the p-value in \eqref{eqn:unified_pvalue_un} is
\begin{align*}
p &= \frac{\sum_{\sigma \in \Scal_{n+1}} w(Z_{\sigma(n+1)}) \cdot
  \One{\score(Z_{\sigma(n+1)}, \Zbag) \geq \score(Z_{n+1},
  \Zbag)}}{(n+1)!} \\ 
&= \frac{\sum_{i=1}^{n+1} w(Z_i) \cdot \One{\score(Z_i, \Zbag) \geq 
  \score(Z_{n+1}, \Zbag)}}{n+1},
\end{align*}
where the second line follows from the fact that, for each $i$, there are $n!$
many permutations $\sigma\in\Scal_{n+1}$ with $\sigma(n+1)=i$. To see that this
corresponds to the unnormalized WCP set, we have 
\[
p > \alpha \iff \score(Z_{n+1}, \Zbag) \leq \inf\bigg\{s\in\R :
  \frac{\sum_{i=1}^{n+1} w(Z_i) \cdot \One{\score(Z_i, \Zbag)>s}}{n+1} \leq 
  \alpha\bigg\}. 
\]
by Lemma \ref{lem:p_vs_q_measure}, which is an unnormalized analogue of Lemma 
\ref{lem:pvalue_vs_quantile}, and is given in Appendix \ref{app:unified_un}. The 
right-hand side above can be directly seen to be equivalent to the event
$Y_{n+1} \in \Ccal(X_{n+1})$, for the unnormalized WCP set in
\eqref{eqn:wcp_un}.  

\paragraph{Validity.} 

We prove Theorem \ref{thm:wcp_un} using the unified result in Theorem
\ref{thm:unified_un}. 

\begin{proof}[Proof of Theorem \ref{thm:wcp_un} via the unified framework] 
As in the proof of Theorem \ref{thm:wcp}, part (a) is a special case of part
(b), so we only prove part (b).  First we define $Q_U$ as the distribution of
$U=\Zbag$ when $Z\sim \Ptr^{n+1}$. We can then calculate the measure $Q_{Z,U}$
as 
\begin{align*}
Q_{Z,U}(A) &=\Ep{Q_U}{\int_\Zcal \One{(z,U)\in A}\d{Q_{Z\mid U}}(z)} \\
&=\Ep{\Ptr^{n+1}}{\frac{\sum_{\sigma\in\Scal_{n+1}}w(Z_{\sigma(n+1)})\cdot
  \One{(Z_\sigma,\Zbag)\in A}}{(n+1)!}}\\ 
&=\frac{\sum_{\sigma\in\Scal_{n+1}}\Ep{\Ptr^{n+1}}{w(Z_{\sigma(n+1)})\cdot
  \One{(Z_\sigma,\Zbag)\in A}}}{(n+1)!}\\
&=\Ep{\Ptr^{n+1}}{w(Z_{n+1})\cdot\One{(Z,\Zbag)\in A}},
\end{align*}
where the second step holds by definition of $Q_U$ and $Q_{Z\mid U}$, and the
last step holds since $\Ptr^{n+1}$ is an exchangeable distribution. On the other
hand, since $P_Z = \Ptr^n\times (\Ptr\circ w^*)$, we have 
\[
P_{Z,U}(A) = \Pp{\Ptr^n\times (\Ptr\circ w^*)}{(Z,\Zbag)\in A}  =
\Ep{\Ptr^{n+1}}{w^*(Z_{n+1})\cdot\One{(Z,\Zbag)\in A}}.
\] 
Then
\begin{align*}
\sup_{A\subseteq\Zcal\times\Ucal}\, \{ P_{Z,U}(A) - Q_{Z,U}(A) \} 
&= \sup_A \, \Ep{\Ptr^{n+1}}{(w^*(Z_{n+1}) - w(Z_{n+1}))\cdot
  \One{(Z,\Zbag)\in A}} \\
&\leq \sup_A \, \Ep{\Ptr^{n+1}}{(w^*(Z_{n+1}) - w(Z_{n+1}))_+}.
\end{align*}
Applying Theorem \ref{thm:unified_un} (together with Remark
\ref{rmk:information_monotonicity_un}) completes the proof. 
\end{proof}

\subsection{WCP under distribution drift}

When we studied WCP previously in Section \ref{sec:wcp}, and then again in 
Section \ref{sec:wcp_un}, our model for the data was that the training points
$Z_1,\dots,Z_n$ are drawn i.i.d.\ from some distribution $\Ptr$, while the
test point $Z_{n+1}$ is instead drawn from $\Pte$. In this subsection, we will
extend to a more challenging setting: we will allow for the training data to
exhibit distribution drift, as well. 
To formalize this, suppose $Z_i \sim \Ptr_i$, $i \in [n]$, and $Z_{n+1} \sim
\Pte$, all independently. In other words, there is distribution shift between
training and test sets, and in addition, the training set consists of
independent but not necessarily identically distributed samples.

\subsubsection{Method and theory}

The method is exactly the same as before, in Section \ref{sec:wcp}---fixing any 
score function of the form $\score((x,y),\zbag)$, we will consider the WCP set
in \eqref{eqn:wcp}, with a prespecified weight function $w(x,y)$. At a high
level, the idea is that WCP should achieve approximate coverage as long as the
training data are approximately i.i.d.\ (i.e., $F_1,\dots,F_n$ are similar each
other), and $w$ approximates the distribution shift (i.e., for each $i$, we have
$w\approx \d\Pte/\d\Ptr_i$). 

As our next result shows, WCP has approximate coverage in this setting with 
training distribution drift.    

\begin{theorem}
\label{thm:wcp_drift}
For independent $Z_1\sim\Ptr_1,\dots,Z_n\sim\Ptr_n,Z_{n+1}\sim\Pte$,
assuming $\Ep{\bar{\Ptr}}{w(X,Y)} < \infty$ under the mixture $\bar{\Ptr} =  
\frac{1}{n}\sum_{i=1}^n \Ptr_i$, the WCP set in \eqref{eqn:wcp} satisfies  
\[ 
\PP{Y_{n+1}\in\Ccal(X_{n+1})} \geq 1-\alpha -  \frac{1}{n}\sum_{i=1}^n
\Big\{\Ep{\Ptr_i}{\big(w^*_i(X,Y) - \bar{w}(X,Y)\big)_+} +
\dtv(\Ptr_i,\bar{\Ptr})\Big\},\]  
where we define 
\[
w^*_i(x,y) = \frac{\d{\Pte}}{\d{\Ptr_i}}(x,y), \quad i \in [n],
\]
and where $\bar{w}(x,y) = w(x,y)/\Ep{\bar{\Ptr}}{w(x,y)}$.
\end{theorem}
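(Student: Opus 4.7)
Following the template of Section \ref{sec:wcp}, I would cast the method into the unified framework of Theorem \ref{thm:unified} by taking $U=\Zbag$ and letting $Q_{Z\mid U=\zbag}$ be the $w$-weighted empirical distribution over permutations of $z$; the identical derivation as in Section \ref{sec:wcp} then shows that the resulting unified p-value coincides with the WCP p-value of \eqref{eqn:wcp}. For the auxiliary marginal, I would take $Q_U$ to be the law of $\Zbag$ when $Z\sim \bar{H}$, where
\[
\d\bar{H}(z)=\frac{\sum_{i=1}^{n+1}\bar{w}(z_i)}{n+1}\,\d\bar{\Ptr}^{n+1}(z).
\]
Since $\bar{\Ptr}^{n+1}$ is exchangeable, the calculation from the proof of Theorem \ref{thm:wcp} carries through verbatim with $\bar{\Ptr}$ in place of $\Ptr$, yielding $Q_Z=\bar{\Ptr}^{n}\times(\bar{\Ptr}\circ\bar{w})$.

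Next I would apply Theorem \ref{thm:unified} and use information monotonicity (Remark \ref{rmk:information_monotonicity}) to reduce to bounding $\dtv(P_V,Q_V)$, where $V=(Z_{n+1},\Zbag)$ fully determines $(S,T)$. Since the test point is independent of the training bag $B=\lbag Z_1,\dots,Z_n\rbag$ under both $P$ and $Q$, a triangle/tensorization inequality gives
\[
\dtv(P_V,Q_V)\leq \dtv(\Pte,\bar{\Ptr}\circ\bar{w})+\dtv(P_B,Q_B).
\]
The first term is handled by the density-level identity $g-\bar{w}\bar{f}=\frac{1}{n}\sum_{i=1}^{n}f_i(w^*_i-\bar{w})$, which follows from $g=f_iw^*_i$ on the support of each $\Ptr_i$ (writing $f_i,g,\bar{f}$ for the densities of $\Ptr_i,\Pte,\bar{\Ptr}$ with respect to a common reference measure). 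Combined with the $L^1$ representation of TV distance between two probability densities, this yields
\[
\dtv(\Pte,\bar{\Ptr}\circ\bar{w})=\int(g-\bar{w}\bar{f})_+\leq \frac{1}{n}\sum_{i=1}^{n}\Ep{\Ptr_i}{(w^*_i(X,Y)-\bar{w}(X,Y))_+},
\]
which matches the first summand in the stated bound.

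The hard part will be producing the averaged drift estimate $\dtv(P_B,Q_B)\leq \frac{1}{n}\sum_{i=1}^{n}\dtv(\Ptr_i,\bar{\Ptr})$. A straightforward per-coordinate maximal coupling of $\Ptr_i$ against $\bar{\Ptr}$ yields only the ordered-level bound $\dtv(\Ptr_1\times\cdots\times\Ptr_n,\bar{\Ptr}^n)\leq \sum_i \dtv(\Ptr_i,\bar{\Ptr})$, which is too loose by a factor of $n$. Recovering the $1/n$ factor requires exploiting the permutation invariance of the bag: one can replace $\Ptr_1\times\cdots\times\Ptr_n$ by its symmetrization $\frac{1}{n!}\sum_{\pi}\prod_i\Ptr_{\pi(i)}$ (which induces the same bag distribution) and then compare to $\bar{\Ptr}^n$ using the mixture decomposition $\bar{\Ptr}=\frac{1}{n}\sum_{j}\Ptr_j$ via a carefully constructed randomized-matching coupling so that order-dependent mismatches average out at the bag level. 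Making this savings rigorous is the crux of the argument; once in place, combining it with the weight-term bound above and invoking Theorem \ref{thm:unified} completes the proof.
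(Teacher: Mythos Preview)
Your setup differs from the paper's in the choice of $Q_U$: you base it on $\bar{\Ptr}^{n+1}$, whereas the paper uses $\Ptr_1\times\dots\times\Ptr_n\times\bar{\Ptr}$ as the base measure inside $H$. This leads you to a product decomposition $\dtv(P_V,Q_V)\leq \dtv(\Pte,\bar{\Ptr}\circ\bar{w})+\dtv(P_B,Q_B)$, and the whole proof then hinges on the bag-level bound
\[
\dtv(P_B,Q_B)\leq \frac{1}{n}\sum_{i=1}^n \dtv(\Ptr_i,\bar{\Ptr}).
\]
Unfortunately this inequality is \emph{false} in general, so no ``randomized-matching coupling'' can deliver it. A concrete counterexample: take $n=3$, $\Ptr_1=\delta_0$, $\Ptr_2=\Ptr_3=\delta_1$, so $\bar{\Ptr}=\tfrac{1}{3}\delta_0+\tfrac{2}{3}\delta_1$. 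Under $P_B$ the bag is deterministically $\lbag 0,1,1\rbag$, while under $Q_B$ (three i.i.d.\ draws from $\bar{\Ptr}$) this bag has probability $3\cdot\tfrac{1}{3}\cdot(\tfrac{2}{3})^2=\tfrac{12}{27}$, giving $\dtv(P_B,Q_B)=1-\tfrac{12}{27}=\tfrac{5}{9}$. On the other hand $\dtv(\Ptr_1,\bar{\Ptr})=\tfrac{2}{3}$ and $\dtv(\Ptr_2,\bar{\Ptr})=\dtv(\Ptr_3,\bar{\Ptr})=\tfrac{1}{3}$, so the right-hand side equals $\tfrac{1}{3}(\tfrac{2}{3}+\tfrac{1}{3}+\tfrac{1}{3})=\tfrac{4}{9}<\tfrac{5}{9}$. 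Intuitively, symmetrizing the product $\prod_i\Ptr_i$ does nothing to reduce the variance of the empirical composition of the bag, and it is precisely this composition that separates a deterministic bag from an i.i.d.\ one.

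The paper sidesteps this entirely by never comparing the training bag to an i.i.d.\ bag. With $Q_U$ built from $\Ptr_1\times\dots\times\Ptr_n\times\bar{\Ptr}$, the expression for $Q_{Z_{n+1},U}(A)$ becomes $\frac{1}{n+1}\sum_{i=1}^{n+1}\Ep{\Ptr_1\times\dots\times\Ptr_n\times\bar{\Ptr}}{\bar{w}(Z_i)\One{(Z_i,\Zbag)\in A}}$, and the TV distance to $P_{Z_{n+1},U}$ is split into $n+1$ terms. For Term $i$ with $i\in[n]$, one rewrites $P_{Z_{n+1},U}(A)$ by swapping coordinates $i$ and $n+1$ (which preserves the bag) and replacing $\Ptr_i$ by $\bar{\Ptr}$ in a single slot at cost $\dtv(\Ptr_i,\bar{\Ptr})$; what remains is then $\Ep{\Ptr_i}{(w^*_i-\bar{w})_+}$. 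Term $n+1$ is handled by expanding $\bar{\Ptr}=\frac{1}{n}\sum_i\Ptr_i$ in the last coordinate. The $1/n$ factor thus arises from averaging $n+1$ individually small terms, not from any bag-level savings.
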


Comparing this to the original result for WCP, we can see that the bound in 
Theorem \ref{thm:wcp} can be derived as a special case with $\Ptr_i=\Ptr$ and
consequently $w^*_i = w$, for each $i$.

\subsubsection{View from the unified framework}

We describe how WCP under training drift fits into the unified conformal
framework. 

\paragraph{Choices of $U$ and $Q_{Z\mid U}$.}

The unified view here is the same in Section \ref{sec:wcp}, for WCP. For
concreteness, we set $U=\Zbag$, and define  
\[
Q_{Z \mid U=\zbag} = \frac{\sum_{\sigma \in \Scal_{n+1}} w(z_{\sigma(n+1)})
  \cdot \delta_{z_\sigma}}{\sum_{\sigma \in \Scal_{n+1}} w(z_{\sigma(n+1)})}.
\]

\paragraph{P-value.}

Since the choice of $U$ and $Q_{Z\mid U}$ is exactly the same as in
Section \ref{sec:wcp}, we again have that $p>\alpha$ if and only if 
$Y_{n+1}\in\Ccal(X_{n+1})$, for the WCP set defined in \eqref{eqn:wcp}. 

\paragraph{Validity.}

While the construction of the method is identical to that in Section
\ref{sec:wcp} (i.e., we are simply using the same WCP method), the proof is
now different because our assumption on the data is more general, allowing for
drift.  

\begin{proof}[Proof of Theorem \ref{thm:wcp_drift} via the unified framework]
We choose $Q_U$ to be the distribution of $U=\Zbag$ when $Z \sim H$, where   
\[
\d{H}(z) =  \frac{1}{n+1} \sum_{i=1}^{n+1}\bar{w}(z_i) \cdot
\d(F_1\times\dots\times F_n\times \bar{F})(z). 
\]
(Note that $H$ is a distribution---i.e., the above definition integrates to
$1$---by definition of $\bar{F}$ and $\bar{w}$.) The result of
Theorem \ref{thm:unified} (together with Remark
\ref{rmk:information_monotonicity}) tells us that  
\[
\PP{p\leq\alpha}\leq \alpha + \dtv(P_{Z_{n+1},U},Q_{Z_{n+1},U}),
\]
because $(S,T)$ can be expressed as a function of $(S,U)$, which in
turn can be expressed as a function of $(Z_{n+1},U)$. To be clear, here
$Q_{Z_{n+1}, U}$ denotes the distribution of $(Z_{n+1}, U)$ induced by the joint
distribution $Q_{Z,U}=Q_{Z\mid U}\times Q_U$. 

We now need to bound the total variation distance, which we can express as
\[
\dtv(P_{Z_{n+1},U},Q_{Z_{n+1},U}) = \sup_{A\in\Zcal\times\Ucal} \,
\Big\{P_{Z_{n+1},U}(A) - Q_{Z_{n+1},U}(A)\Big\}.
\] 
First, observe that by definition of $Q_U$ and $Q_{Z\mid U}$, 
\begin{align*}
&Q_{Z_{n+1},U}(A)=\Pp{Q_{Z,U}}{(Z_{n+1},U)\in A}\\
&=\Ep{H}{\sum_{\sigma\in\Scal_{n+1}}\frac{w(Z_{\sigma(n+1)})}
{\sum_{\sigma'\in\Scal_{n+1}}w(Z_{\sigma'(n+1)})} 
\One{(Z_{\sigma(n+1)},\Zbag)\in A}} \\
&=\Ep{H}{\sum_{i=1}^{n+1}\frac{w(Z_i)}
{\sum_{j=1}^{n+1}w(Z_j)}\One{(Z_i,\Zbag)\in A}} \\
&=\Ep{\Ptr_1\times\dots\times \Ptr_n\times \bar{\Ptr}}
{\frac{\sum_{i=1}^{n+1}\bar{w}(Z_i)}{n+1}\cdot 
\sum_{i=1}^{n+1}\frac{w(Z_i)}{\sum_{j=1}^{n+1}w(Z_j)}
\One{(Z_i,\Zbag)\in A}} \\
&=\Ep{\Ptr_1\times\dots\times \Ptr_n\times \bar{\Ptr}}
{\frac{1}{n+1} \sum_{i=1}^{n+1}\bar{w}(Z_i)\cdot \One{(Z_i,\Zbag)\in A}},
\end{align*}
where the next-to-last step uses the definition of $H$. Therefore, we can split
the TV bound into two types of terms:  
\begin{multline*}
\dtv(P_{Z_{n+1},U},Q_{Z_{n+1},U}) \leq {}\\
\frac{1}{n+1}\sum_{i=1}^n \underbrace{\sup_A\,\Big\{P_{Z_{n+1},U}(A)
  -\Ep{\Ptr_1\times\dots\times \Ptr_n\times \bar{\Ptr}}{\bar{w}(Z_i)\cdot
  \One{(Z_i,\Zbag)\in A}}\Big\}}_{\textnormal{Term $i$}} \\
{}+ \frac{1}{n+1} \underbrace{\sup_A\,\Big\{P_{Z_{n+1},U}(A) -
  \Ep{\Ptr_1\times\dots\times \Ptr_n\times \bar{\Ptr}}{\bar{w}(Z_{n+1})\cdot
  \One{(Z_{n+1},\Zbag)\in A}}\Big\}}_{\textnormal{Term $n+1$}}.
\end{multline*} 
In Appendix \ref{app:wcp_drift}, we will verify that
\begin{equation}
\label{eqn:wcp_drift_term_i}
\textnormal{Term $i$} \leq 
\Ep{\Ptr_i}{\big(w^*_i(X,Y)-\bar{w}(X,Y)\big)_+} +\dtv(\Ptr_i,\bar{\Ptr}),
\end{equation}
for each $i\in[n]$, and
\begin{equation}
\label{eqn:wcp_drift_term_n+1}
\textnormal{Term $n+1$} \leq
\frac{1}{n}\sum_{i=1}^n \Ep{\Ptr_i}{\big(w^*_i(X,Y) - \bar{w}(X,Y)\big)_+}.
\end{equation}
Combining these calculations, we then have
\[
\dtv(P_{Z_{n+1},U},Q_{Z_{n+1}\mid U}\times Q_U) \leq \frac{1}{n}\sum_{i=1}^n
\Big\{\Ep{\Ptr_i}{\big(w^*_i(X,Y) - \bar{w}(X,Y)\big)_+} +
\dtv(\Ptr_i,\bar{\Ptr})\Big\},
\]
which completes the proof.
\end{proof}

\subsection{RLCP with feature resampling}

We now return to randomly-localized conformal prediction (RLCP), introduced in
Section \ref{sec:rlcp}. Recall, in that method, after sampling a noisy version
of the test feature $\tilde{X}_{n+1} \sim H(X_{n+1},\cdot)$, the conformal set
is defined by placing weight $\propto H(X_i,\tilde{X}_{n+1})$ on each data   
point $i\in[n+1]$, placing higher weight on data lying near the test point. Here
we introduce a different version of the method: the weights will be centered at
$X_K$ for some random choice of $K\in[n+1]$, rather than at a synthetic value 
$\tilde{X}_{n+1}$. This version of RLCP may be computationally easier if
sampling from the density $H(X_{n+1},\cdot)$ is difficult. 

Furthermore, we may prefer a version of RLCP which samples among existing
observed covariate values since this may be more interpretable in certain
settings. For example, if our feature points lie on a lattice, then directly
sampling $\tilde{X}_{n+1}$ may not be a feasible value for the test
feature---RLCP with a Gaussian kernel $H$ leads to a value $\tilde{X}_{n+1}$
that lies off the lattice. In contrast, the modified form of RLCP presented here
will always reuse an existing data point instead of sampling a new value, and
therefore avoids this issue.

\subsubsection{Method and theory}

Fix any score function of the form $\score((x,y),\zbag)$, and a localizing
kernel $H:\Xcal\times\Xcal\to\R_+$, as in RLCP in Section \ref{sec:rlcp}.
Now, given the training features $X_1,\dots,X_n$ and test feature $X_{n+1}$, we 
sample a random index $K\in[n+1]$ by 
\[
K \mid X_1,\dots,X_{n+1} \sim \sum_{k=1}^{n+1} \bar{H}_{n+1,k}\cdot \delta_k, 
\] 
where for $i,k \in [n+1]$, we introduce the weight
\[
\bar{H}_{ik} = \frac{H(X_i,X_k)}{\sum_{j=1}^{n+1} H(X_i,X_j)}.
\]
Defining $Z^y$ and $S^y_i$, $i\in[n+1]$ as before, we then define the modified
RLCP set as 
\begin{equation}
\label{eqn:rlcp_mod}
\Ccal(X_{n+1}) = \bigg\{ y \in \Ycal: S^y_{n+1} \leq \Quantile_{1-\alpha}
\bigg(\frac{\sum_{i=1}^{n+1} \bar{H}_{iK} \cdot \delta_{S^y_i}}
{\sum_{i=1}^{n+1}\bar{H}_{iK}} \bigg) \bigg\}.  
\end{equation}
In other words, this is the same as the prediction set for the RLCP method
\eqref{eqn:rlcp} except with $X_K$ in place of $\tilde{X}_{n+1}$.  

Like the RLCP method, this modified construction again offers marginal coverage
under exchangeability. (Approximate conditional coverage results can also be
established, analogous to the results of Theorem \ref{thm:rlcp} for RLCP, but
for brevity we omit these here.) 

\begin{theorem}
\label{thm:rlcp_mod}
If $Z_1,\dots,Z_{n+1}$ are exchangeable, then the modified RLCP set
in \eqref{eqn:rlcp_mod} satisfies a marginal coverage guarantee, 
\[
\PP{Y_{n+1}\in\Ccal(X_{n+1})}\geq 1-\alpha.
\]
\end{theorem}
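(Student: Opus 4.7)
The plan is to fit modified RLCP into the unified framework by choosing $U$ and $Q_{Z\mid U}$ in close analogy with the RLCP derivation in Section \ref{sec:rlcp}, replacing the freshly sampled center $\tilde{X}_{n+1}\sim H(X_{n+1},\cdot)$ by the data-driven center $X_K$. Since $K$ is drawn from a known randomization and $Z$ is exchangeable, I expect $Q_{Z\mid U}$ can be matched exactly to the true conditional $P_{Z\mid U}$, so that Remark \ref{rmk:exact_validity} immediately delivers $\PP{p\le\alpha}\le\alpha$ and hence the stated coverage, with no TV slack.

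Concretely, I would take $U = (\Zbag, X_K)$, overload notation so that $\score(z,u) = \score(z_{n+1},\zbag)$, and set
\[
Q_{Z\mid U=(\zbag, x^*)} \;=\; \frac{\sum_{\sigma\in\Scal_{n+1}} \dfrac{H(x_{\sigma(n+1)},\,x^*)}{\sum_{j=1}^{n+1} H(x_{\sigma(n+1)}, x_j)}\cdot \delta_{z_\sigma}}{\sum_{\sigma\in\Scal_{n+1}} \dfrac{H(x_{\sigma(n+1)},\,x^*)}{\sum_{j=1}^{n+1} H(x_{\sigma(n+1)}, x_j)}}.
\]
Verifying that the unified p-value \eqref{eqn:unified_pvalue} reduces to the modified RLCP p-value is then routine bookkeeping in the spirit of the WCP and RLCP derivations: collect permutations by the value $\sigma(n+1)=i$, use that there are $n!$ such $\sigma$ for each $i$, and invoke Lemma \ref{lem:pvalue_vs_quantile} to conclude $p>\alpha \iff Y_{n+1}\in\Ccal(X_{n+1})$.

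The main step is identifying $P_{Z\mid U}$. Exchangeability gives $\PPst{Z=z_\sigma}{\Zbag}=1/(n+1)!$, while the sampling rule for $K$ expresses $\PPst{X_K=x^*}{Z=z_\sigma,\Zbag}$ as a sum, over indices $k$ with $x_{\sigma(k)}=x^*$, of $\bar H_{n+1,k}$ evaluated at the ordering $z_\sigma$; each summand equals $H(x_{\sigma(n+1)},x^*)/\sum_j H(x_{\sigma(n+1)},x_j)$. Applying Bayes' rule and normalizing yields exactly $Q_{Z\mid U}$, so $P_{Z\mid U}=Q_{Z\mid U}$ almost surely, and Remark \ref{rmk:exact_validity} completes the proof.

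The one subtlety---and the only place where I expect to need care---is feature coincidence: if $x^*$ appears at $m\ge 2$ positions in the bag, the Bayes sum picks up a multiplicity factor of $m$. Happily this factor depends only on $(\zbag, x^*)$ and not on $\sigma$, so it cancels in the normalization and the identity $P_{Z\mid U}=Q_{Z\mid U}$ is preserved. If needed, I can sidestep the issue cleanly by redefining $U$ to carry an explicit tag on which bag element is the center (rather than just its feature value), eliminating multiplicities from the outset.
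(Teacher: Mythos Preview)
Your proposal is correct and follows essentially the same route as the paper: the same choice $U=(\Zbag,X_K)$, the same $Q_{Z\mid U}$, the same reduction of the unified p-value to the modified RLCP p-value, and the same conclusion $P_{Z\mid U}=Q_{Z\mid U}$ via Bayes' rule plus exchangeability, finishing with Remark~\ref{rmk:exact_validity}. Your explicit treatment of the feature-coincidence multiplicity (and the observation that it cancels in the normalization) is a detail the paper's proof leaves implicit, so in that respect your argument is slightly more careful.
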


\subsubsection{View from the unified framework}

We describe how our modified RLCP method fits into the unified framework.

\paragraph{Choices of $U$ and $Q_{Z\mid U}$.}

Analogous to RLCP in Section \ref{sec:rlcp}, we let $U = (\Zbag,
X_K)$. Now, define $Q_{Z\mid U}$ as follows: at $u=(\zbag,x)$, with 
$z=((x_1,y_1),\dots,(x_{n+1},y_{n+1}))$, 
\[
Q_{Z\mid U=(\zbag,x)} = \frac{\sum_{\sigma\in\Scal_{n+1}}
\frac{H(x_{\sigma(n+1)},x)}{\sum_{k=1}^{n+1}H(x_{\sigma(n+1)},x_k)} 
\cdot\delta_{z_\sigma}}
{\sum_{\sigma\in\Scal_{n+1}}\frac{H(x_{\sigma(n+1)},x)} 
{\sum_{k=1}^{n+1}H(x_{\sigma(n+1)},x_k)}}
\]
We again write the score function as $\score(z,u) = \score(z_{n+1},\zbag)$ for
$u=\zbag$.  

\paragraph{P-value.}

Under these choices, the p-value in \eqref{eqn:unified_pvalue} is 
\begin{align*}
p &= \frac{\sum_{\sigma\in\Scal_{n+1}} \frac{H(X_{\sigma(n+1)},X_K)}
{\sum_{k=1}^{n+1}H(X_{\sigma(n+1)},X_k)} \cdot
\One{\score(Z_{\sigma(n+1)},\Zbag) \geq \score(Z_{n+1},\Zbag)}}
{\sum_{\sigma\in\Scal_{n+1}} \frac{H(X_{\sigma(n+1)},X_K)}
{\sum_{k=1}^{n+1}H(X_{\sigma(n+1)},X_k)}} \\
&= \frac{\sum_{i=1}^{n+1} \frac{H(X_i,X_K)}
{\sum_{k=1}^{n+1}H(X_i,X_k)} \cdot \One{\score(Z_i,\Zbag) \geq 
  \score(Z_{n+1},\Zbag)}}{\sum_{i=1}^{n+1} \frac{H(X_i,X_K)}
{\sum_{k=1}^{n+1}H(X_i,X_k)}} \\
&= \frac{\sum_{i=1}^{n+1} \bar{H}_{iK} \cdot \One{\score(Z_i,\Zbag) 
  \geq \score(Z_{n+1},\Zbag)}}{\sum_{i=1}^{n+1} \bar{H}_{iK}}.
\end{align*}
similarly to the RLCP calculations. And again as for RLCP, by Lemma
\ref{lem:pvalue_vs_quantile}, this corresponds to the modified RLCP set defined
in \eqref{eqn:rlcp_mod}, since  
\[
p > \alpha \iff \score(Z_{n+1},\Zbag) \leq \Quantile_{1-\alpha}\bigg(
\frac{\sum_{i=1}^{n+1} \bar{H}_{iK} \cdot
  \delta_{\score(Z_i,\Zbag)}}{\sum_{i=1}^{n+1} \bar{H}_{iK}} \bigg).  
\]

\paragraph{Validity.}

We prove Theorem \ref{thm:rlcp_mod} using the unified result in Theorem
\ref{thm:unified}. 

\begin{proof}[Proof of Theorem \ref{thm:rlcp_mod} via the unified framework.]
Fix any $z=((x_1,y_1),\dots,(x_{n+1},y_{n+1}))$. Then by definition of the
distribution of the random index $K$, we have  
\[
U\mid Z=z \sim \frac{\sum_{k=1}^{n+1} H(x_{n+1},x_k) \cdot 
\delta_{(\zbag,x_k)}} {\sum_{k=1}^{n+1}H(x_{n+1},x_k)}.
\] 
Similarly, for any permutation $\sigma\in\Scal_{n+1}$, we can calculate (using
$\zbag = \lbag z_\sigma\rbag$) that 
\[
U\mid Z=z_\sigma \sim \frac{\sum_{k=1}^{n+1} H(x_{\sigma(n+1)},x_k) \cdot
\delta_{(\zbag,x_k)}} {\sum_{k=1}^{n+1}H(x_{\sigma(n+1)},x_k)}.
\]
Comparing these two calculations, since $Z$ is assumed to be exchangeable, we
therefore have 
\[
P_{Z\mid U=(\zbag,x_i)} = \frac{\sum_{\sigma\in\Scal_{n+1}} 
\frac{H(x_{\sigma(n+1)},x_i)}{\sum_{k=1}^{n+1}H(x_{\sigma(n+1)},x_k)}
\cdot\delta_{z_\sigma}} {\sum_{\sigma\in\Scal_{n+1}}
\frac{H(x_{\sigma(n+1)},x_i)}{\sum_{k=1}^{n+1}H(x_{\sigma(n+1)},x_k)}}.
\] 
We therefore see that $P_{Z\mid U}=Q_{Z\mid U}$, which proves that $\PP{p\leq
  \alpha}\leq\alpha$ (recall Remark \ref{rmk:exact_validity}). 
\end{proof}

\subsection{Generalized WCP with a nonsymmetric score}
\label{sec:gwcp_ns}

We revisit the generalized WCP setting from Section \ref{sec:gwcp}. Recall here
the dataset $Z \in \Zcal^{n+1}$ has an arbitrary joint density $f$, and the
generalized WCP set \eqref{eqn:gwcp} is constructed by placing weight
\smash{$W^y_i = \sum_{\sigma : \sigma(n+1)=i} f(Z^y_\sigma) / f(Z^y)$} on point
$i$. The previous treatment assumed the score function is of the form
$\score(z,u) = \score(z_{n+1},\zbag)$, i.e., the score assigned to $z_{n+1}$ is
not allowed to depend on the ordering of the remaining $n$ points in
$z$. However, in this subsection we will now see that such a restriction is
actually unnecessary, from the perspective of the unified framework. Relaxing
it, as we will do below, leads to a generalized WCP set that allows for
\emph{any} score function, without a symmetry assumption. 

\subsubsection{Method and theory}

The score function now takes the form $\score((x,y),z)$, comparing a data point
$(x,y)$ to an \emph{ordered} dataset $z\in\Zcal^{n+1}$ (note that $\score$ is no
longer required to be symmetric in its second argument). This accommodates, e.g.,
a score that can depend on the time-ordering of data collected in the FCS
application (with likelihood ratio in \eqref{eqn:fcs_lr}). For arbitrary $y \in 
\Ycal$, we define as usual $Z^y = (Z_1,\dots,Z_n,(X_{n+1},y))$, and scores that
are now indexed by permutations,     
\[
S^y_\sigma = \score\big( Z^y_{\sigma(n+1)}, Z^y_\sigma \big), \quad
\sigma\in\Scal_{n+1}.  
\]
The generalized WCP set at coverage level $1-\alpha$ is defined by
\begin{equation}
\label{eqn:gwcp_ns}
\Ccal(X_{n+1}) = \bigg\{ y\in\Ycal : S^y_\id \leq \Quantile_{1-\alpha}\bigg(
\frac{\sum_{\sigma \in \Scal_{n+1}} f(Z^y_\sigma) / f(Z^y) \cdot
  \delta_{S^y_\sigma}} {\sum_{\sigma \in \Scal_{n+1}} f(Z^y_\sigma) / f(Z^y)}
\bigg)\bigg\}, 
\end{equation}
where $\id$ denotes the identity permutation. This construction with a
nonsymmetric score has the same coverage guarantee, assuming exact knowledge 
of the density ratio. 

\begin{theorem}
\label{thm:gwcp_ns}
If $Z\in\Zcal^{n+1}$ has density $f$ with respect to an exchangeable base
measure, then for any score function (not necessarily symmetric), the
generalized WCP set in \eqref{eqn:gwcp_ns} satisfies $\PP{Y_{n+1} \in
  \Ccal(X_{n+1})} \geq 1-\alpha$.  
\end{theorem}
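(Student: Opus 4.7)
The plan is to apply the unified framework essentially as in Section \ref{sec:gwcp}, but with the overloaded score function allowed to depend on the full ordered dataset. Specifically, I would take the partial information $U=\Zbag$ and define
\[
Q_{Z\mid U=\zbag} = \frac{\sum_{\sigma\in\Scal_{n+1}} f(z_\sigma)/f(z) \cdot
\delta_{z_\sigma}}{\sum_{\sigma\in\Scal_{n+1}} f(z_\sigma)/f(z)}
\]
exactly as in the symmetric case, but now overload the score by $\score(z,u) = \score(z_{n+1},z)$ rather than $\score(z_{n+1},\zbag)$. The crucial (and only) difference from Section \ref{sec:gwcp} is that the score is now permitted to see the entire ordering of $z$; the definition \eqref{eqn:unified_pvalue} of the unified p-value imposes no symmetry requirement on $\score(\cdot,u)$.

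Under these choices, the unified p-value becomes
\[
p = \frac{\sum_{\sigma\in\Scal_{n+1}} f(Z_\sigma)/f(Z) \cdot
\One{\score(Z_{\sigma(n+1)},Z_\sigma) \geq \score(Z_{n+1},Z)}}
{\sum_{\sigma\in\Scal_{n+1}} f(Z_\sigma)/f(Z)}.
\]
Setting $y=Y_{n+1}$ and recognizing $S^y_\sigma = \score(Z^y_{\sigma(n+1)}, Z^y_\sigma)$ and $S^y_\id = \score(Z^y_{n+1},Z^y)$, this is exactly the weighted empirical tail probability sitting inside the quantile that defines $\Ccal(X_{n+1})$ in \eqref{eqn:gwcp_ns}. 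One application of Lemma \ref{lem:pvalue_vs_quantile} then gives $\{p>\alpha\} = \{Y_{n+1}\in\Ccal(X_{n+1})\}$, so the desired coverage guarantee reduces to a Type I error bound on $p$. Note that the earlier grouping trick (collapsing the sum over $\sigma$ into a sum over $i=\sigma(n+1)$ via the weights $W^y_i$) is exactly what fails without symmetry; this is why the set \eqref{eqn:gwcp_ns} is indexed by $\sigma$ rather than $i$.

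For validity, the assumption that $Z$ has density $f$ with respect to an exchangeable base measure means that the true conditional law is
\[
P_{Z\mid U=\zbag} = \frac{\sum_\sigma f(z_\sigma) \cdot \delta_{z_\sigma}}
{\sum_\sigma f(z_\sigma)} = Q_{Z\mid U=\zbag},
\]
(the ratio $f(z_\sigma)/f(z)$ merely rescales numerator and denominator by the same factor). Hence $P_{Z\mid U} = Q_{Z\mid U}$, and Remark \ref{rmk:exact_validity} yields $\PP{p\leq\alpha}\leq\alpha$, completing the argument. There is no substantive obstacle: the punchline is that the proof of Theorem \ref{thm:gwcp} never used symmetry of the score in an essential way, and once $\score(z,u)$ in the unified framework is allowed to see the full ordering of $z$, the generalized WCP construction continues to slot cleanly into the framework and inherit exact validity.
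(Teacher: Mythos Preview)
Your proposal is correct and follows essentially the same approach as the paper: choose $U=\Zbag$, take $Q_{Z\mid U}$ as in Section~\ref{sec:gwcp}, overload the score as $\score(z,u)=\score(z_{n+1},z)$, verify via Lemma~\ref{lem:pvalue_vs_quantile} that $p>\alpha$ coincides with $Y_{n+1}\in\Ccal(X_{n+1})$, and conclude exact validity from $P_{Z\mid U}=Q_{Z\mid U}$ and Remark~\ref{rmk:exact_validity}. Your added remark explaining why the grouping over $\sigma(n+1)=i$ fails without symmetry is a helpful clarification that the paper leaves implicit.
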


\subsubsection{View from the unified framework}

We describe how this extension of generalized WCP fits into the unified framework.

\paragraph{Choices of $U$ and $Q_{Z\mid U}$.}

As for generalized WCP in Section \ref{sec:gwcp}, we define $U = \Zbag$, but we
now write the score function as 
\[
\score(z,u) = \score(z_{n+1},z).
\]
(Compare this to $\score(z_{n+1},\zbag)$ for the symmetric setting of Section
\ref{sec:gwcp}). Define  
\[
Q_{Z\mid U=\zbag} = \frac{\sum_{\sigma\in\Scal_{n+1}} f(z_\sigma) \cdot 
    \delta_{z_\sigma}}{\sum_{\sigma\in\Scal_{n+1}} f(z_\sigma)}
= \frac{\sum_{\sigma\in\Scal_{n+1}} f(z_\sigma) / f(z) \cdot
    \delta_{z_\sigma}}{\sum_{\sigma\in\Scal_{n+1}} f(z_\sigma) / f(z)}.
\]

\paragraph{P-value.}

Under these choices, the p-value in \eqref{eqn:unified_pvalue} is 
\[
p = \frac{\sum_{\sigma\in\Scal_{n+1}} f(Z_\sigma) / f(Z) \cdot
  \One{\score(Z_{\sigma(n+1)},Z_\sigma) \geq \score(Z_{n+1},Z)}}
  {\sum_{\sigma\in\Scal_{n+1}} f(Z_\sigma) / f(Z)} .
\]
To see that this corresponds to generalized WCP, recall that by Lemma 
\ref{lem:pvalue_vs_quantile},  
\[
p>\alpha \iff \score(Z_{n+1},Z) \leq \Quantile_{1-\alpha}\bigg(
\frac{\sum_{\sigma\in\Scal_{n+1}} f(Z_\sigma) / f(Z) \cdot
  \delta_{\score(Z_{\sigma(n+1)},Z_\sigma)}}
  {\sum_{\sigma\in\Scal_{n+1}} f(Z_\sigma) / f(Z)}\bigg). 
\]  
The right-hand side above can be directly seen to be equivalent to the event
$Y_{n+1} \in \Ccal(X_{n+1})$, for the generalized WCP set in \eqref{eqn:gwcp_ns}. 

\paragraph{Validity.}

We prove Theorem \ref{thm:gwcp_ns} using the unified result in Theorem
\ref{thm:unified}. 

\begin{proof}[Proof of Theorem \ref{thm:gwcp_ns} via the unified framework] 
The proof is identical to that of Theorem \ref{thm:gwcp}: since we have assumed
that $Z$ has joint density $f$ with respect to some exchangeable base measure,
this means that $P_{Z\mid U} = Q_{Z\mid U}$, by construction, and therefore
(recalling Remark \ref{rmk:exact_validity}) we have $\PP{p\leq \alpha}\leq
\alpha$.   
\end{proof}

\subsection{Generalized WCP with Monte Carlo p-values}

In the generalized WCP setting once again, an important bottleneck that can
arise in practice is that the generalized WCP set \eqref{eqn:gwcp} can be very
expensive to compute, due to the calculation required for each weight $W^y_i$:
indeed, each $W^y_i$ here is a sum over $n!$ terms, where each term requires
computing the likelihood ratio $f(Z^y_\sigma) / f(Z^y)$, which can itself be
highly nontrivial. Of course, computational tractability is only made worse in
the extension \eqref{eqn:gwcp_ns} to nonsymmetric scores given in the last
subsection. We present in what follows a Monte Carlo approximation to this
generalized WCP set, which will improve computational efficiency while
preserving the same validity guarantee as the original construction.  

In this subsection, we will specialize to a setting where the covariates have a
known but potentially complex dependence: 
\[
X = (X_1,\dots,X_{n+1}) \sim P_X,
\]
while $Y_i \mid X$, for $i \in [n+1]$, are independent from a common conditional
distribution $P_{Y\mid X}$. If $f$ denotes the joint density of $Z$ and $f_X$
the corresponding density of $X$, then observe that under this model, for any $z
= ((x_1,y_1),\dots,(x_{n+1},y_{n+1}))$, 
\begin{equation}\label{eqn:fx}
\frac{f(z_\sigma)}{f(z)} =
\frac{f_X(x_{\sigma(1)},\dots,x_{\sigma(n+1)})}{f_X(x_1,\dots,x_{n+1})}, 
\end{equation}
where this simplification is due to our assumption on the distribution of
$(Y_1,\dots,Y_{n+1})\mid X$. Critically, the likelihood ratio expression on the  
right-hand side above is only a function of the features. 

\subsubsection{Method and theory}

For arbitrary $y \in \Ycal$, we define $Z^y = (Z_1,\dots,Z_n,(X_{n+1},y))$ as
usual, and just as in the last subsection, define scores that are indexed by
permutations,  
\[
S^y_\sigma = \score\big( Z^y_{\sigma(n+1)}, Z^y_\sigma \big), \quad
\sigma\in\Scal_{n+1}.
\]

Fixing $M \geq 1$, let $g_X$ be a user-chosen proposal density, where $f_X,g_X$
are assumed to be absolutely continuous with respect to one another. Conditional
on $\lbag X \rbag$, we can think of $g_X$ as inducing a distribution on
permutations, which places probability  
\[
\frac{g_X(X_\sigma)}{\sum_{\sigma' \in \Scal_{n+1}} g_X(X_{\sigma'})}
\]
on $\sigma \in \Scal_{n+1}$. Let $\sigma_1,\dots,\sigma_M$ be i.i.d.\ samples
from this distribution, and set $\sigma_0 = \id$ to be the identity permutation,
for convenience. Then the importance sampling approximation to the generalized
WCP set at coverage level $1-\alpha$ is defined by 
\begin{equation}
\label{eqn:gwcp_ns_is}
\Ccal(X_{n+1}) = \bigg\{ y\in\Ycal : S^y_\id \leq \Quantile_{1-\alpha}\bigg(
\frac{\sum_{m=0}^M \frac{f_X(X_{\sigma_m}) / f_X(X)}{g_X(X_{\sigma_m}) / g_X(X)} \cdot
  \delta_{S^y_{\sigma_m}}} {\sum_{m=0}^M \frac{f_X(X_{\sigma_m}) /
    f_X(X)}{g_X(X_{\sigma_m}) / g_X(X)}} \bigg)\bigg\}.  
\end{equation} 
This Monte Carlo set, while computationally cheaper, still has an exact validity 
guarantee. 

\begin{theorem}
\label{thm:gwcp_ns_is}
Suppose $Z\in\Zcal^{n+1}$ has density $f$ with respect to an exchangeable base
measure, with $f$ satisfying \eqref{eqn:fx}. Let $g_X$ be an arbitrary proposal
density (such that $f_X,g_X$ are absolutely continuous with respect to each
other). Then for any choice $M \geq 1$, the importance sampling approximation to
the generalized WCP set in \eqref{eqn:gwcp_ns_is} satisfies $\PP{Y_{n+1} 
 \in \Ccal(X_{n+1})} \geq 1-\alpha$. 
\end{theorem}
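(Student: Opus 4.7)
The plan is to cast the importance sampling method into the unified framework of Section \ref{sec:unified}, then invoke the importance sampling guarantee of Theorem \ref{thm:unified_is} (or more precisely, the exact validity in Remark \ref{rmk:exact_validity_is}). This is essentially a ``Monte-Carlo-ification'' of the argument used for the nonsymmetric generalized WCP in Section \ref{sec:gwcp_ns}. I would set $U = \Zbag$ and define the score function as $\score(z,u) = \score(z_{n+1},z)$, exactly as in the proof of Theorem \ref{thm:gwcp_ns}, together with the target distribution
\[
Q_{Z\mid U=\zbag} = \frac{\sum_{\sigma\in\Scal_{n+1}} f(z_\sigma) \cdot \delta_{z_\sigma}}{\sum_{\sigma\in\Scal_{n+1}} f(z_\sigma)}
\]
and the proposal
\[
R_{Z\mid U=\zbag} = \frac{\sum_{\sigma\in\Scal_{n+1}} g_X(x_\sigma) \cdot \delta_{z_\sigma}}{\sum_{\sigma\in\Scal_{n+1}} g_X(x_\sigma)}.
\]
By assumption $f_X$ and $g_X$ are mutually absolutely continuous, so the same holds for $Q_{Z\mid U}$ and $R_{Z\mid U}$ on the finite support $\{z_\sigma : \sigma\in\Scal_{n+1}\}$. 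The specified sampling scheme for $\sigma_1,\dots,\sigma_M$ produces $Z^{(m)} = Z_{\sigma_m}$ which are precisely i.i.d.\ draws from $R_{Z\mid U}$, conditionally on $U$.

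Next, I would compute the importance sampling p-value from \eqref{eqn:unified_pvalue_is}. At $Z^{(m)} = Z_{\sigma_m}$, the likelihood ratio $\d{Q}_{Z\mid U}/\d{R}_{Z\mid U}$ is proportional to $f(Z_{\sigma_m})/g_X(X_{\sigma_m})$. Using the factorization in \eqref{eqn:fx}, this ratio equals $f_X(X_{\sigma_m})/g_X(X_{\sigma_m})$ times a factor depending only on $\zbag$ that cancels between numerator and denominator in $\tilde{p}$. Dividing further by $f_X(X)/g_X(X)$ (another common factor), the weights become exactly $\big(f_X(X_{\sigma_m})/f_X(X)\big)/\big(g_X(X_{\sigma_m})/g_X(X)\big)$. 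Since $\score(Z^{(m)},U) = \score(Z_{\sigma_m,n+1},Z_{\sigma_m}) = S^{Y_{n+1}}_{\sigma_m}$ and $\score(Z,U) = S^{Y_{n+1}}_\id$, Lemma \ref{lem:pvalue_vs_quantile} applied to the weighted empirical distribution over the $\sigma_m$'s yields
\[
\tilde{p} > \alpha \iff S^{Y_{n+1}}_\id \leq \Quantile_{1-\alpha}\bigg( \frac{\sum_{m=0}^M \frac{f_X(X_{\sigma_m})/f_X(X)}{g_X(X_{\sigma_m})/g_X(X)} \cdot \delta_{S^{Y_{n+1}}_{\sigma_m}}}{\sum_{m=0}^M \frac{f_X(X_{\sigma_m})/f_X(X)}{g_X(X_{\sigma_m})/g_X(X)}}\bigg),
\]
which is precisely $Y_{n+1}\in\Ccal(X_{n+1})$ for the set \eqref{eqn:gwcp_ns_is}.

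Finally, I would verify the identity $P_{Z\mid U} = Q_{Z\mid U}$: since $Z$ has density $f$ with respect to an exchangeable base measure, conditioning on $U=\Zbag$ places mass proportional to $f(z_\sigma)$ on each permutation $z_\sigma$, which matches $Q_{Z\mid U}$ exactly. Invoking Remark \ref{rmk:exact_validity_is} then gives $\PP{\tilde{p}\leq\alpha}\leq\alpha$, and by the p-value/coverage correspondence above, $\PP{Y_{n+1}\in\Ccal(X_{n+1})}\geq 1-\alpha$. The main place to be careful is the constant-cancellation when simplifying the importance weights from the abstract $\d{Q}_{Z\mid U}/\d{R}_{Z\mid U}$ to the ratios appearing in \eqref{eqn:gwcp_ns_is}; everything else reduces to combining existing machinery (the nonsymmetric choice of $U$ and score from Section \ref{sec:gwcp_ns} with the importance sampling extension from Section \ref{sec:unified_mc}).
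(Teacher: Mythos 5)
Your proposal matches the paper's own argument essentially step for step: the same choices $U=\Zbag$, nonsymmetric score $\score(z,u)=\score(z_{n+1},z)$, target $Q_{Z\mid U}$ weighted by $f(z_\sigma)$, proposal $R_{Z\mid U}$ weighted by $g_X(x_\sigma)$, the same cancellation via \eqref{eqn:fx} to recover the weights in \eqref{eqn:gwcp_ns_is}, and the same conclusion via $P_{Z\mid U}=Q_{Z\mid U}$ together with Remark \ref{rmk:exact_validity_is} (i.e., Theorem \ref{thm:unified_is}). No gaps beyond those the paper itself leaves implicit, so this is correct and essentially identical to the paper's proof.
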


\subsubsection{View from the unified framework}

We describe how this importance sampling version of generalized WCP fits into  
the unified framework.

\paragraph{Choices of $U$, $Q_{Z \mid U}$, and $R_{Z \mid U}$.} 

We define $U$ and $Q_{Z\mid U}$ precisely as in generalized WCP for a
nonsymmetric score, as in Section \ref{sec:gwcp_ns}. Moreover, we take $R_{Z
  \mid U}$ to be the conditional distribution corresponding to joint density
$g_X$ on $X=(X_1,\dots,X_{n+1})$, namely, for $u=\zbag$ where $z =
((x_1,y_1),\dots,(x_{n+1},y_{n+1}))$, 
\[
R_{Z\mid U=\zbag} = \frac{\sum_{\sigma\in\Scal_{n+1}}
  g_X(x_{\sigma(1)},\dots,x_{\sigma(n+1)}) \cdot \delta_{z_\sigma}}
{\sum_{\sigma\in\Scal_{n+1}} g_X(x_{\sigma(1)},\dots,x_{\sigma(n+1)})}.
\]

\paragraph{P-value.}

Under these choices, the importance sampling p-value in
\eqref{eqn:unified_pvalue_is} is 
\[
\tilde{p} = \frac{\sum_{m=0}^M \frac{\d{Q}_{Z \mid U}(Z_{\sigma_m})}
{\d{R}_{Z \mid U}(Z_{\sigma_m})} \cdot 
\One{\score(Z_{\sigma_m(n+1)},Z_{\sigma_m}) \geq \score(Z_{n+1},Z)}}
{\sum_{m=0}^M \frac{\d{Q}_{Z \mid U}(Z_{\sigma_m})}
{\d{R}_{Z \mid U}(Z_{\sigma_m})}},
\]
where we are writing $Z^{(m)}=Z_{\sigma_m}$, for some permutation
$\sigma_m$. Plugging in our choices of $Q_{Z\mid U}$ and $R_{Z\mid U}$, then,  
\[
\tilde{p} =\frac{\sum_{m=0}^M \frac{f_X(X_{\sigma_m})/f_X(X)}
{g_X(X_{\sigma_m})/g_X(X)} \cdot \One{\score(Z_{\sigma_m(n+1)},Z_{\sigma_m}) 
\geq \score(Z_{n+1},Z)}}{\sum_{m=0}^M \frac{f_X(X_{\sigma_m})/f_X(X)}
{g_X(X_{\sigma_m})/g_X(X)}} .
\]
To see that this corresponds to generalized WCP, observe that by Lemma 
\ref{lem:pvalue_vs_quantile},  
\[
\tilde{p}>\alpha \iff \score(Z_{n+1},Z) \leq \Quantile_{1-\alpha}\bigg(
\frac{\sum_{m=0}^M \frac{f_X(X_{\sigma_m}) / f_X(X)}{g_X(X_{\sigma_m})/g_X(X)} 
  \cdot \delta_{\score(Z_{\sigma_m(n+1)},Z_{\sigma_m})}}
{\sum_{m=0}^M \frac{f_X(X_{\sigma_m}) / f_X(X)}{g_X(X_{\sigma_m})/g_X(X)}} 
\bigg). 
\]  
The right-hand side above can be directly seen to be equivalent to the event
$Y_{n+1} \in \Ccal(X_{n+1})$, for the importance sampling generalized WCP set in
\eqref{eqn:gwcp_ns_is}.  

\paragraph{Validity.}

We prove Theorem \ref{thm:gwcp_ns_is} using the unified result in Theorem
\ref{thm:unified_is}.  

\begin{proof}[Proof of Theorem \ref{thm:gwcp_ns_is} via the unified framework]  
The proof is similar to that of Theorem \ref{thm:gwcp_ns}: since we have assumed
that $Z$ has joint density $f$ with respect to some exchangeable base measure,
this means that $P_{Z\mid U} = Q_{Z\mid U}$, by construction, and thus (now
using Remark \ref{rmk:exact_validity_is}) we have $\PP{p\leq \alpha}\leq
\alpha$.     
\end{proof}

\section{Discussion}

In this work, we presented a unified treatment of different existing methods 
and theorems in the conformal prediction literature which handle departures from 
the standard assumption of exchangeability. At its core, our unified view of 
conformal methods is that they are based on inference about the joint
distribution of data $Z=(Z_1,\dots,Z_{n+1})$ (where each $Z_i=(X_i,Y_i)$), given 
partial information $U$ about the data. The partial information $U$ contains at
least the information in $\Zbag$ (the unordered bag of points in $Z$) and it may
contain more. We find that many existing results can be framed as follows: the
user specifies a conditional distribution $Q_{Z \mid U}$ as a model for the true
distribution of $Z \mid U$, and then uses $Q_{Z \mid U}$ to form a prediction
set. Our theory provides a coverage guarantee that depends on the total
variation distance between $Q_{Z \mid U}$ and the true distribution $P_{Z \mid
U}$.

Apart from standard and split conformal, other CP methods encompassed by our
framework include weighted conformal prediction (WCP), nonexchangeable conformal
prediction (NexCP), randomly-localized conformal prediction (RLCP), and
generalized WCP. While we did not study hierarchical CP
\citep{lee2023distribution} or SymmPI \citep{dobriban2023symmPI}, which rely on
notions of invariance, we believe our framework should be able to accommodate,
at least to some extent, these methods as special cases. Moreover, while our
work focused on single-point prediction problems (with a single test point), we
believe our framework can be extended to cover multi-point settings, for
example, transductive prediction \citep{vovk2013transductive} or selective
prediction \citep{jin2023selective, jin2023model}.

We also showed that new conformal prediction results follow from the unified
framework. Many others should also be possible to derive. For example, two
existing methods, WCP and NexCP, are both weighted variants of conformal
prediction but they appear quite different in their assumptions and technical
motivations. Our paper provides a unified explanation for why they both work;
this suggests the possibility of combining the two styles of weights, and raises
an open question of determining problem settings in which such a combination
might be useful. As another example, the results we presented for generalized
WCP assume that the likelihood ratio between different possible data
permutations is known exactly; robustness results (such as those for WCP) would
be important to develop, for applications where this likelihood ratio is unknown
or cannot be computed exactly due to computational complexity. The unified
framework not only offers a route towards developing these possible extensions,
but may also enable the development of completely new methods, for settings not
currently covered by the footprint of existing conformal methodology.   

We conclude by discussing how our paper fits into the broader landscape of
conditional inference in statistics. Conditioning is of course a key device in
statistics, and is ubiquitous in both classical and modern inferential
principles and tools. Conditioning lies at the core of Fisherian inference,
featured prominently in core ideas such as Fisher's sufficiency principle
\citep{fisher1922mathematical}, Birnbaum's conditionality principle
\citep{birnbaum1962foundations}, and the Rao-Blackwell theorem
\citep{rao1945information, blackwell1947conditional}. Its importance can also be
clearly understood from the classic books by \citet{lehmann1998theory,
lehmann2022testing}. Conditioning plays a similarly major role in nonparametric
inference: permutation and randomization tests being two key examples, each
based on conditioning. These bear strong similarities, but are motivated from
different perspectives. A recent paper by \citet{zhang2023what} provides a nice
overview, and interprets CP and WCP in terms of quasi-randomization tests. 

More broadly, conditioning lies at the center of many developments in modern
statistical inference. This includes selective inference \citep{lee2016exact,
tibshirani2016exact, fithian2014optimal}, adaptive data analysis
\citep{dwork2015preserving, dwork2015reusable, bassily2016algorithmic},
conditional independence testing \citep{candes2018panning,
berrett2020conditional}, and joint coverage regions \citep{dobriban2023joint}.  
Our paper contributes to the literature on conditional inference, revealing that
conditioning on partial information $U$ is one of the main ideas underpinning
conformal prediction and its many extensions beyond exhangeability, with
other one being the choice of a conditional distribution $Q_{Z\mid U}$. The
connection to testing is more salient, and the validity and robustness results
associated with conformal prediction and generalizations should now be less
mysterious when viewed from the traditional statistical lens. Connections to
seemingly disparate parts of the literature on conditional inference may be
within reach, and may represent interesting directions for future work.  

\subsection*{Acknowledgements}

We are grateful to our colleagues Emmanuel Cand{\`e}s, Aaditya Ramdas,
Anastasios Angelopoulos, and Stephen Bates for many inspiring collaborations and
discussions which have helped shape our understanding of conformal prediction, 
and underlie the ideas in this work. We are also grateful to Drew Prinster for
providing helpful commentary and references. 

R.F.B.\ was supported by National Science Foundation (NSF) grant number
DMS-2023109 and Office of Naval Research (ONR) grant number
N00014-24-1-2544. R.J.T.\ was supported by ONR grant number N00014-20-1-2787.  

{\small\RaggedRight
\bibliographystyle{plainnat}
\bibliography{bib}}

\clearpage
\appendix

\section{Additional proofs and technical results}

\subsection{Proof of Lemma \ref{lem:pvalue_vs_quantile}}
\label{app:pvalue_vs_quantile}

We can assume $\alpha<1$ since otherwise the claim is trivial (we would have
$\Quantile_{1-\alpha}(Q) = -\infty$). Let $x_1 < \dots < x_m\in\R$ be the  
values in the support of $Q$, and $p_i = \Pp{Q}{X=x_i}>0$, for each $i\in[m]$.
Then by definition of the quantile,   
\[
\Quantile_{1-\alpha}(Q) = \inf\{t\in\R : \Pp{Q}{X\leq t}\geq 1-\alpha\} =
x_{k^*},
\]
where 
\[
k^* = \min\bigg\{ k \in [m]: \sum_{i \leq k} p_i \geq 1-\alpha \bigg\}.  
\]
Therefore, for any $x\in\R$, if $x \leq x_{k^*}$ then 
\[
\Pp{Q}{X\geq x} = \sum_{i=1}^m p_i \One{x_i \geq x} \geq \sum_{i=1}^m p_i
\One{x_i \geq x_{k^*}} = \sum_{i\geq k^*} p_i = 1 - \sum_{i<k^*}p_i >\alpha,
\]
since $\sum_{i < k^*} p_i < 1-\alpha$ by definition of $k^*$. And similarly, if
$x>x_{k^*}$ then 
\[
\Pp{Q}{X\geq x} = \sum_{i=1}^m p_i \One{x_i \geq x} \leq \sum_{i=1}^m p_i
\One{x_i > x_{k^*}} = \sum_{i> k^*} p_i = 1 - \sum_{i\leq k^*}p_i \leq\alpha,
\] 
since $\sum_{i \leq k^*} p_i \geq 1-\alpha$ by definition of $k^*$.

\subsection{Proof of Theorem \ref{thm:unified_mc}}
\label{app:proof_thm:unified_mc}

This result is a consequence of Theorem \ref{thm:unified_is} for the importance
sampling setting, obtained by simply choosing the proposal distribution as
$R_{Z\mid U}=Q_{Z\mid U}$. 

\subsection{Proof of Theorem \ref{thm:unified_is}}
\label{app:proof_thm:unified_is}

Although we have presented this result as a guarantee for $\tilde{p}$, which is
approximation of the original p-value $p$ that was defined
in \eqref{eqn:unified_pvalue} and analyzed in Theorem \ref{thm:unified}, we will
now see that Theorem \ref{thm:unified_is} can in fact be derived (exactly) as a
special case of Theorem \ref{thm:unified}. 

We will first need to reset some of our notation. Specifically, we need a new
definition of partial information: instead of $U$ on its own, the partial
information will now be given by  
\[\tilde{U} = \big(U, \lbag Z^{(0)},\dots,Z^{(M)} \hspace{-3pt}\rbag
  \hspace{-2pt}\big), 
\]
and overloading notation, we will let $\score(Z,\tilde{U})=\score(Z,U)$. We will
also define the conditional distribution 
\[
Q_{Z\mid \tilde{U} = (u,\lbag z^{(0)},\dots,z^{(M)}\rbag)} = \frac{\sum_{m=0}^M
\frac{\d{Q_{Z\mid U=u}}}{\d{R_{Z\mid U=u}}}(z^{(m)})\cdot \delta_{z^{(m)}}}
{\sum_{m=0}^M\frac{\d{Q_{Z\mid U=u}}}{\d{R_{Z\mid U=u}}}(z^{(m)})}.
\]
Observe that the p-value $\tilde{p}$ defined in \eqref{eqn:unified_pvalue_is} is
exactly the same as the original p-value $p$ in \eqref{eqn:unified_pvalue} if we
use the new conditional distribution $Q_{Z\mid \tilde{U}}$ in place of the
original one $Q_{Z\mid U}$. Applying Theorem \ref{thm:unified} (together with
Remark \ref{rmk:information_monotonicity}), we therefore have the guarantee 
\[
\PP{\tilde{p}\leq\alpha} \leq \alpha + \inf_{Q_{\tilde{U}}} \,
\dtv(P_{Z,\tilde{U}},Q_{Z,\tilde{U}}), 
\]
where the infimum is taken over all distributions $Q_{\tilde{U}}$ on $\tilde{U}
= (U, \lbag Z^{(0)},\dots,Z^{(M)} \rbag)$, and where $Q_{Z,\tilde{U}} =
Q_{Z\mid\tilde{U}}\times Q_{\tilde{U}}$. 

Now we need to work with this remaining TV term. First, we can observe that
$P_{Z,\tilde{U}}$ is the distribution on $(Z,\tilde{U}) = (Z,U,\lbag Z,
Z^{(1)},\dots,Z^{(M)}\rbag)$ induced by the joint distribution 
\[
(Z,U,Z^{(1)},\dots,Z^{(M)})\sim P_{Z,U}\times (Q_{Z\mid U})^M,
\] 
where this notation indicates that we first draw $(Z,U)\sim P_{Z,U}$, and then 
conditional on this draw, we sample \smash{$Z^{(1)},\dots,Z^{(M)}\iidsim
  Q_{Z\mid U}$}. Next, fix any marginal $Q_U$ on $U\in\Ucal$, sample 
\[
(Z,U,Z^{(1)},\dots,Z^{(M)})\sim Q_{Z,U}\times (Q_{Z\mid U})^M,
\]
and define $Q_{\tilde{U}}$ as the corresponding distribution of
$\tilde{U}=(U,\lbag Z,Z^{(1)},\dots,Z^{(M)}\rbag)$. Note that by exchangeability
of $Z,Z^{(1)},\dots,Z^{(M)}$ under this joint distribution, the induced
distribution of $(Z,\tilde{U})$ is thus equivalent to
$Q_{Z,\tilde{U}}=Q_{Z\mid\tilde{U}} \times Q_{\tilde{U}}$. And by information 
monotonicity, we have 
\[
\dtv(P_{Z,\tilde{U}},Q_{Z,\tilde{U}}) \leq \dtv\big(P_{Z,U}\times (Q_{Z\mid
  U})^M ,Q_{Z,U}\times (Q_{Z\mid U})^M\big) = \dtv(P_{Z,U},Q_{Z,U}). 
\]
In other words, we have verified that
\[\inf_{Q_{\tilde{U}}}\dtv(P_{Z,\tilde{U}},Q_{Z,\tilde{U}})
\leq \inf_{Q_U} \dtv(P_{Z,U},Q_{Z,U}),
\]
which completes the proof.

\subsection{Refinement of Theorem \ref{thm:unified_mc}}
\label{app:unified_mc_refine}

\begin{theorem}
\label{thm:unified_mc_refine}
Suppose $(Z,U) \sim P_{Z,U}$. Fix any $M\geq 1$ with $\alpha(M+1)\geq 1$. 
Let \smash{$P_{S,\tilde{T}}$} be the induced joint distribution on
\smash{$(S,\tilde{T})=(\score(Z,U),\thresh_A(U))$}, where 
\smash{$A\sim \textnormal{Beta}_{M,\alpha}$} is independent of $(Z,U)$, for 
\[
\textnormal{Beta}_{M,\alpha} = \textnormal{Beta}\Big(\lfloor
\alpha(M+1)\rfloor,\lceil (1-\alpha)(M+1) \rceil\Big). 
\] 
Then the Monte Carlo p-value defined in \eqref{eqn:unified_pvalue_mc} satisfies   
\[
\PP{\tilde{p}\leq \alpha}\leq \alpha + \inf_{Q_U} \, \dtv(P_{S,\tilde{T}},Q_{S,\tilde{T}}),
\]
where the infimum is taken over all distributions $Q_U$ on $U$, and where
\smash{$Q_{S,\tilde{T}}$} denotes the joint distribution of
\smash{$(S,\tilde{T})=(\score(Z,U),\thresh_A(U))$} induced by drawing
\smash{$(Z,U,A) \sim Q_{Z,U}\times \textnormal{Beta}_{M,\alpha}$}, for $Q_{Z,U}
= Q_{Z\mid U}\times Q_U$.  
\end{theorem}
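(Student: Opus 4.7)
The plan is to reduce the Monte Carlo event $\{\tilde{p}\leq\alpha\}$ to a quantile-exceedance event $\{S>\thresh_A(U)\}$ via a quantile-coupling argument, then apply a total-variation comparison exactly as in the proof of Theorem \ref{thm:unified}. The Beta-distributed $A$ in the theorem statement arises naturally as the tail quantile level of the order statistic that determines the Monte Carlo acceptance cutoff.

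Concretely, let $k=\lfloor\alpha(M+1)\rfloor$ (so $k\geq 1$ by assumption) and observe that $\tilde{p}\leq\alpha$ holds if and only if at most $k-1$ of $S^{(1)},\ldots,S^{(M)}$ satisfy $S^{(m)}\geq S$, equivalently if and only if $S>V_{(M-k+1)}$, where $V_{(1)}\leq\cdots\leq V_{(M)}$ denote the order statistics of $S^{(1)},\ldots,S^{(M)}$; this equivalence holds regardless of ties. Since $\tilde{p}$ depends on $Z^{(1)},\ldots,Z^{(M)}$ only through their scores, and since conditional on $U$ these scores are iid from $Q_{S\mid U}$ under both $P$ and $Q$, I would replace the samples by a quantile coupling: draw $W_1,\ldots,W_M\iidsim\textnormal{Uniform}[0,1]$ independently of $(Z,U)$ and set $S^{(m)}=\Quantile_{W_m}(Q_{S\mid U})$. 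Monotonicity of $\Quantile$ yields $V_{(M-k+1)}=\Quantile_{W_{(M-k+1)}}(Q_{S\mid U})$; and since $W_{(M-k+1)}\sim\textnormal{Beta}(M-k+1,k)$, the variable $A:=1-W_{(M-k+1)}$ is distributed as $\textnormal{Beta}_{M,\alpha}$ and independent of $(Z,U)$. Hence $\{\tilde{p}\leq\alpha\}=\{S>\thresh_A(U)\}$ almost surely, and the pair $(S,\thresh_A(U))$ has distribution $P_{S,\tilde{T}}$ when $(Z,U)\sim P_{Z,U}$ and $Q_{S,\tilde{T}}$ when $(Z,U)\sim Q_{Z,U}$, matching the theorem's definitions.

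Applying Theorem \ref{thm:unified_mc} with $P=Q$ (or equivalently Remark \ref{rmk:exact_validity_mc}) gives $\Pp{Q}{\tilde{p}\leq\alpha}\leq\alpha$, so $\Pp{Q_{S,\tilde{T}}}{S>\tilde{T}}\leq\alpha$. The standard total-variation inequality then yields
\[
\Pp{P}{\tilde{p}\leq\alpha}=\Pp{P_{S,\tilde{T}}}{S>\tilde{T}}\leq\Pp{Q_{S,\tilde{T}}}{S>\tilde{T}}+\dtv(P_{S,\tilde{T}},Q_{S,\tilde{T}})\leq\alpha+\dtv(P_{S,\tilde{T}},Q_{S,\tilde{T}}),
\]
and taking the infimum over $Q_U$ completes the proof.

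The main subtlety is the coupling step when $Q_{S\mid U}$ has atoms, which is the generic case here. The quantile construction handles this gracefully: monotonicity of $\Quantile$ ensures that the sorted scores are $\Quantile$-images of the sorted uniforms, and the equivalence $\{\tilde{p}\leq\alpha\}=\{S>V_{(M-k+1)}\}$ requires no genericity hypothesis on ties. The assumption $\alpha(M+1)\geq 1$ enters only to guarantee $k\geq 1$ so that $V_{(M-k+1)}$ is a genuine order statistic; otherwise $\tilde{p}\geq 1/(M+1)>\alpha$ always and the bound is vacuous.
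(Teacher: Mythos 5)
Your proof is correct and follows essentially the same route as the paper's: the quantile-coupling of the Monte Carlo scores via i.i.d.\ uniforms, identification of the cutoff as the order statistic $V_{(M-k+1)}$ (which equals the paper's $S_{(\lceil(1-\alpha)(M+1)\rceil)}$), the resulting $\textnormal{Beta}_{M,\alpha}$ level $A$ independent of $(Z,U)$, superuniformity of $\tilde{p}$ under $Q_{Z,U}\times(Q_{Z\mid U})^M$, and the total-variation comparison. The only cosmetic differences are your bottom-up indexing of the order statistic and your citing Remark \ref{rmk:exact_validity_mc} for the superuniformity under $Q$ (non-circular, and equivalent to the paper's direct conditional-exchangeability argument).
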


\begin{proof}
Let $Q_{S\mid U=u}$ denote the induced distribution on $S=\score(Z,u)$, under
$Z\sim Q_{Z\mid U=u}$. Since $S\in\R$, we can generate samples from this
distribution by using its quantile function---that is, if
$V\sim\textnormal{Unif}[0,1]$, then defining  
\[
S = \Quantile_V(Q_{S\mid U=u}),
\]
this generates a random draw from $Q_{S\mid U=u}$.

Now let $V_1,\dots,V_M\iidsim \textnormal{Unif}[0,1]$, independent of
$(Z,U)$. Defining $S_m =\Quantile_{V_m}(Q_{S\mid U})$ for each $m\in[M]$, we can
therefore equivalently define the p-value $\tilde{p}$ as  
\[
\tilde{p} = \frac{1 + \sum_{m=1}^M\One{S_m\geq \score(Z,U)}}{M+1}.
\]
By construction, we can observe that, for $k =
\lceil(1-\alpha)(M+1)\rceil\in[M]$,  
\[
\tilde{p} \leq \alpha \iff \score(Z,U) >  S_{(k)},
\]
where $S_{(1)}\leq \dots \leq S_{(M)}$ are the order statistics of $S_1,\dots,S_M$.
Since $v\mapsto \Quantile_v(Q_{S\mid U})$ is a monotone nondecreasing
function, it holds that $S_{(k)} = \Quantile_{V_{(k)}}(Q_{S\mid U})$. Therefore, 
\[
\tilde{p} \leq \alpha \iff \score(Z,U) >  \Quantile_{V_{(k)}}(Q_{S\mid U}) =
\thresh_{1-V_{(k)}}(U), 
\]
where the last step uses the definition of the threshold $t_a(u) =
\Quantile_{1-a}(Q_{S\mid U=u})$. Further, by construction, we have
$V_{(k)}\independent (Z,U)$, and $1-V_{(k)} \sim \textnormal{Beta}(M+1-k,k) =
\textnormal{Beta}_{M,\alpha}$.   

In other words, setting $\tilde{T} = \thresh_A(U)$ where we define
$A=1-V_{(k)}\sim \textnormal{Beta}_{M,\alpha}$, so far we we have shown that 
\[\
Pp{P_{Z,U}\times (Q_{Z\mid U})^M}{\tilde{p}\leq \alpha} = 
\Pp{P_{S,\tilde{T}}}{S > \tilde{T}}.
\]
But by an identical argument, it also holds that
\[
\Pp{Q_{Z,U}\times (Q_{Z\mid U})^M}{\tilde{p}\leq \alpha} =
\Pp{Q_{S,\tilde{T}}}{S > \tilde{T}}.
\]
We therefore have
\begin{align*}
\Pp{P_{Z,U}\times (Q_{Z\mid U})^M}{\tilde{p}\leq \alpha}
&=\Pp{P_{S,\tilde{T}}}{S> \tilde{T}} \\
&\leq\Pp{Q_{S,\tilde{T}}}{S>\tilde{T}} +\dtv(P_{S,\tilde{T}},Q_{S,\tilde{T}}) \\
&=\Pp{Q_{Z,U}\times (Q_{Z\mid U})^M}{\tilde{p}\leq \alpha} +
  \dtv(P_{S,\tilde{T}},Q_{S,\tilde{T}}) \\
&\leq \alpha + \dtv(P_{S,\tilde{T}},Q_{S,\tilde{T}}),
\end{align*}
where the last step holds since, under the joint distribution $Q_{Z,U}\times
(Q_{Z\mid U})^M$, by construction it holds that $Z,Z^{(1)},\dots,Z^{(M)}$ are
exchangeable conditional on $U$ (because, conditional on $U$, these random   
variables comprise $M+1$ i.i.d.\ draws from $Q_{Z\mid U}$), and hence
$\tilde{p}$ is superuniform under this distribution. 
\end{proof}

\subsection{Proof of Theorem \ref{thm:unified_un}}
\label{app:unified_un}

We will need two additional supporting lemmas. The first is due to Lemma A1 from
\citet{harrison2012conservative}, and we transcribe it to our notation here for
concreteness.  

\begin{lemma}[{\citealt{harrison2012conservative}}]
\label{lem:pvalue_measure}
Let $Q$ be a measure on a space $\Zcal$, which is supported on finitely many
values. Let $\teststat:\Zcal\to\R$ be a measurable function. Define\footnote{For
  conciseness, throughout Appendix \ref{app:unified_un}, for the unnormalized
  setting we are using capital letters, such as $Z$, to define events---for
  example, $\{p(Z)\leq \alpha\}$ should be interpreted as the subset
  $\{z\in\Zcal : p(z)\leq \alpha\}\subseteq\Zcal$, and consequently, for a
  measure $Q$ on $\Zcal$, the notation $Q\{p(Z)\leq \alpha\}$ should be
  interpreted as $Q\big(\{z\in\Zcal : p(z)\leq\alpha\}\big)$.} 
\[
p(z) = Q\{\teststat(Z) \geq \teststat(z)\}. 
\]
Then for all $\alpha \geq 0$,
\[
Q\{p(Z) \leq \alpha\} \leq \alpha.
\]
\end{lemma}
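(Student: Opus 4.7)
The plan is to exploit the finite support of $Q$ to reduce the statement to a clean identity about cumulative tail masses. Let $s_1 > s_2 > \cdots > s_K$ be the distinct values attained by $\teststat$ on the support of $Q$, and define $a_k = Q\{\teststat(Z) = s_k\} > 0$. Then the level sets $\{\teststat = s_k\}$ form a partition of $\mathrm{supp}(Q)$ into pieces of $Q$-mass $a_k$, and $Q(\mathrm{supp}(Q)) = \sum_{k=1}^K a_k$.

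Next I would observe that whenever $z \in \mathrm{supp}(Q)$ satisfies $\teststat(z) = s_k$, the p-value collapses to $p(z) = Q\{\teststat(Z) \geq s_k\} = a_1 + \cdots + a_k =: P_k$, since the event on the right is exactly the union of the top $k$ level sets. Thus $p$ is constant on each level set with value $P_k$, and the sequence $P_1 \leq P_2 \leq \cdots \leq P_K$ is nondecreasing in $k$.

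The bound then falls out of a telescoping argument. The event $\{p(Z) \leq \alpha\}$, restricted to $\mathrm{supp}(Q)$, is the union of those level sets with $P_k \leq \alpha$, and by monotonicity of $P_k$ this index set is an initial segment $\{1, \ldots, k^*\}$ (or empty, in which case the claim is trivial). Its $Q$-mass is therefore $a_1 + \cdots + a_{k^*} = P_{k^*}$, which is at most $\alpha$ by the defining property of $k^*$.

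I do not anticipate any real obstacle; the only mild subtleties are handling ties in $\teststat$-values (addressed by grouping support points into level sets rather than treating them individually) and the empty-index-set boundary case. Notably, unlike the normalized Lemma \ref{lem:pvalue_vs_quantile}, no assumption that $Q(\Zcal) = 1$ is needed — the argument is entirely insensitive to the total mass of $Q$, which is precisely the flexibility the unnormalized unified framework in Section \ref{sec:unified_un} exploits.
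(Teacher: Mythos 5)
Your argument is correct. For the record, the paper does not prove this lemma at all---it is imported directly as Lemma A1 of \citet{harrison2012conservative}---so your contribution is a self-contained elementary proof rather than a variation on an in-paper argument. Your route is the natural discrete one, and it closely mirrors the style the paper does use for the related Lemma \ref{lem:pvalue_vs_quantile} in Appendix \ref{app:pvalue_vs_quantile}: order the distinct values $s_1 > \dots > s_K$ of $\teststat$ on the support, note that $p$ is constant equal to $P_k = a_1 + \dots + a_k$ on the $k$th level set, observe that $\{p(Z) \leq \alpha\}$ meets the support in an initial segment of level sets by monotonicity of $P_k$, and conclude that its mass is $P_{k^*} \leq \alpha$ (or zero in the empty case). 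All steps check out, including the tie-handling via level sets and the observation that points outside the finite support contribute no mass. You are also right to emphasize that nowhere is $Q(\Zcal)=1$ used---this is exactly the feature that lets the unnormalized framework of Section \ref{sec:unified_un} invoke the lemma, and it is the substantive difference from Lemma \ref{lem:pvalue_vs_quantile}, whose statement is otherwise analogous. The only cosmetic remark: your $k^*$ should be defined as the largest $k$ with $P_k \leq \alpha$ (when one exists), which is implicit in your telescoping step; stating it explicitly would make the final inequality read off immediately.
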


When $Q$ is a distribution (i.e., $Q(\Zcal) =1$) this lemma reduces to the
standard fact in \eqref{eqn:pvalue_superuniform}: a p-value constructed for
distribution $Q$ is valid (i.e., superuniform) for data drawn from $Q$. 
The next lemma is similar to Lemma \ref{lem:pvalue_vs_quantile}, and its proof
is similar to the proof of this lemma in Appendix \ref{app:pvalue_vs_quantile},
hence omitted.  

\begin{lemma}
\label{lem:p_vs_q_measure}
Let $Q$ be a measure on $\R$, which is supported on finitely many values. Then
for any $\alpha\geq 0$ and $x\in\R$,  
\[ 
Q\{X\geq x\} > \alpha \iff x \leq \inf\{t\in\R : Q\{X>t\} \leq \alpha\}.  
\]
\end{lemma}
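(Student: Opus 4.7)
The plan is to mirror the proof of Lemma \ref{lem:pvalue_vs_quantile} in Appendix \ref{app:pvalue_vs_quantile}, adapting the argument from a probability distribution to a general finite-support measure. The key observation is that the function $t \mapsto Q\{X > t\}$ is a nonincreasing, right-continuous step function taking only finitely many values, so the infimum on the right-hand side can be identified as a specific atom of $Q$ (or $-\infty$ in a degenerate case), and the equivalence can then be checked directly by a short case analysis.

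First I would enumerate the support of $Q$ as $x_1 < \cdots < x_m$, with atomic masses $q_i = Q\{X = x_i\} > 0$, and note that $Q\{X > t\} = \sum_{i > k} q_i$ whenever $x_k \leq t < x_{k+1}$ (using the conventions $x_0 = -\infty$, $x_{m+1} = +\infty$). Letting $t^\star := \inf\{t\in\R : Q\{X > t\} \leq \alpha\}$, this closed form lets me identify $t^\star$ explicitly: if $Q(\R) \leq \alpha$ then every $t$ satisfies the defining inequality, so $t^\star = -\infty$; otherwise $t^\star = x_{k^\star}$ for $k^\star := \min\{k \in [m] : \sum_{i > k} q_i \leq \alpha\}$, which is well defined since the sum at $k = m$ is $0 \leq \alpha$.

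With this description in hand, I would verify the two directions separately. For $x \leq t^\star$ (which forces $t^\star > -\infty$ and hence $x \leq x_{k^\star}$), I have $Q\{X \geq x\} \geq \sum_{i \geq k^\star} q_i > \alpha$, where the strict inequality comes from the minimality of $k^\star$ (so the sum at $k^\star - 1$ must strictly exceed $\alpha$). Conversely, if $x > t^\star$, then either $Q(\R) \leq \alpha$, in which case $Q\{X \geq x\} \leq Q(\R) \leq \alpha$ holds trivially, or $x > x_{k^\star}$ and $Q\{X \geq x\} \leq Q\{X > x_{k^\star}\} = \sum_{i > k^\star} q_i \leq \alpha$ by the defining property of $k^\star$.

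The argument is essentially routine bookkeeping once the piecewise-constant structure of $t \mapsto Q\{X > t\}$ is written down; there is no substantive obstacle. The one point that deserves care is the degenerate case $Q(\R) \leq \alpha$, where $t^\star = -\infty$ makes the right-hand condition $x \leq t^\star$ vacuously false for all real $x$, and the equivalence must be checked to also fail on the left (which it does, since $Q\{X \geq x\} \leq Q(\R) \leq \alpha$). Beyond that, only the distinction between strict and weak inequalities at the threshold needs to be tracked, as in the original proof of Lemma \ref{lem:pvalue_vs_quantile}.
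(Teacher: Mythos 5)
Your proof is correct and follows essentially the same route the paper intends: the paper omits this proof, stating it is analogous to the proof of Lemma \ref{lem:pvalue_vs_quantile} in Appendix \ref{app:pvalue_vs_quantile}, and your argument is exactly that adaptation, identifying the infimum as the atom $x_{k^\star}$ and checking both directions via the minimality of $k^\star$. Your explicit treatment of the degenerate case $Q(\R)\leq\alpha$ (where the infimum is $-\infty$) is the right analogue of the paper's handling of $\alpha=1$ in the normalized setting.
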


When $Q$ is a distribution this lemma reduces to the result in Lemma
\ref{lem:pvalue_vs_quantile}. We now give the proof of Theorem
\ref{thm:unified_un}. 

\begin{proof}[Proof of Theorem \ref{thm:unified_un}]
By Lemma \ref{lem:pvalue_measure} (applied with $Q_{Z \mid U=u}$ in place of
$Q$, and the score $\score(\cdot,u)$ in place of the test statistic
$\teststat$), we have   
\[
Q_{Z \mid U=u}\{p(Z,u) \leq \alpha\} \leq \alpha.
\]
Since this holds for each $u\in\Ucal$, we thus have for any marginal
distribution $Q_U$,
\[
\Ep{Q_U}{Q_{Z \mid U}\{p(Z,U) \leq \alpha\}} \leq \alpha. 
\]
Next, let $Q_{S\mid U=u}$ denote the measure on $\score(Z,u)$ induced by the
measure $Q_{Z\mid U=u}$ on $\Zcal$. Then by Lemma \ref{lem:p_vs_q_measure}
(applied with $Q_{S\mid U=u}$ in place of $Q$), we have for any $x\in\R$, 
\[
Q_{S\mid U=u}\{S\geq x\} >\alpha \iff x \leq \inf\big\{t\in\R : Q_{S\mid
  U=u}\{S>t\} \leq \alpha\big\}. 
\]
Equivalently, plugging in the definition of $Q_{S\mid U=u}$ and
$\thresh_\alpha(u)$, we can write 
\[
Q_{Z\mid U=u}\{\score(Z,u)\geq x\} >\alpha \iff x \leq \thresh_\alpha(u).
\]
Fixing any $z\in\Zcal$ and plugging in $x=\score(z,u)$, we therefore have
\[
Q_{Z\mid U=u}\{\score(Z,u)\geq \score(z,u)\} >\alpha \iff \score(z,u) \leq
\thresh_\alpha(u). 
\]
But since $p(z,u) = Q_{Z\mid U=u}\{\score(Z,u)\geq \score(z,u)\}$ by definition
of the p-value in the unnormalized setting, given in
\eqref{eqn:unified_pvalue_un}, we equivalently have   
\[
p(z,u) >\alpha \iff \score(z,u) \leq \thresh_\alpha(u).
\]
Therefore,
\[
\Ep{Q_U}{Q_{Z \mid U}\{\score(Z,U) > \thresh_\alpha(U)\}} = \Ep{Q_U}{Q_{Z \mid
    U}\{p(Z,U) \leq \alpha\}} \leq \alpha,
\]
and hence 
\begin{align*}
\Pp{P_{Z,U}}{p(Z,U) \leq \alpha}
&=\Pp{P_{Z,U}}{\score(Z,U) > \thresh_\alpha(U)} \\
&\leq \alpha + \Pp{P_{Z,U}}{\score(Z,U) > \thresh_\alpha(U)} - 
\Ep{Q_U}{Q_{Z \mid U}\{\score(Z,U) > \thresh_\alpha(U)\}},
\end{align*}
The proof is complete by noting that the difference on the right-hand side
is bounded by the supremum in the theorem statement (taking $A = \{(z,u) 
\in \Zcal \times \Ucal : \score(z,u) > \thresh_\alpha(u)\}$).  
\end{proof}

\subsection{Additional calculations for the proof of Theorem
  \ref{thm:wcp_drift}} 
\label{app:wcp_drift}

First we verify the bound \eqref{eqn:wcp_drift_term_i} on Term $i$. By
definition of $P$, we calculate 
\[
P_{Z_{n+1},U}(A) = \Pp{P_{Z,U}}{(Z_{n+1},U)\in A} =
\Pp{\Ptr_1\times\dots\times\Ptr_n\times\Pte}{(Z_{n+1},\Zbag)\in A}.
\]
We can bound this last expression as
\begin{align*}
&\Pp{\Ptr_1\times\dots\times\Ptr_n\times\Pte}{(Z_{n+1},\Zbag)\in A} \\
&\leq \Pp{\Ptr_1\times\dots\times\Ptr_{i-1}\times\bar{\Ptr}\times
\Ptr_{i+1}\times\dots\times\Ptr_n\times\Pte}{(Z_{n+1},\Zbag)\in A} 
+\dtv(\Ptr_i,\bar{\Ptr}) \\
&= \Pp{\Ptr_1\times\dots\times\Ptr_{i-1}\times\Pte\times\Ptr_{i+1}
\times\dots\times\Ptr_n\times\bar{\Ptr}}{(Z_i,\Zbag)\in A} 
+\dtv(\Ptr_i,\bar{\Ptr}) \\
&= \Ep{\Ptr_1\times\dots\times\Ptr_{i-1}\times\Ptr_i\times\Ptr_{i+1}
\times\dots\times\Ptr_n\times\bar{\Ptr}}{w^*_i(Z_i)\cdot 
\One{(Z_i,\Zbag)\in A}}+\dtv(\Ptr_i,\bar{\Ptr}),
\end{align*}   
where the second step holds by permuting the variables, and the last step holds
by definition of $w^*_i$. Returning to the definition of Term $i$, we then
calculate  
\begin{align*}
\textnormal{Term $i$} 
&=\sup_A\,\Big\{P_{Z_{n+1},U}(A) -\Ep{\Ptr_1\times\dots\times 
\Ptr_n\times \bar{\Ptr}}{\bar{w}(Z_i)\cdot \One{(Z_i,\Zbag)\in A}}\Big\} \\
&\leq \sup_A\,\bigg\{\Big(\Ep{\Ptr_1\times\dots\times\Ptr_n\times\bar{\Ptr}} 
{w^*_i(Z_i)\cdot \One{(Z_i,\Zbag)\in A}} +\dtv(\Ptr_i,\bar{\Ptr})\Big) \\
&\hspace{2in}{}- \Ep{\Ptr_1\times\dots\times \Ptr_n\times \bar{\Ptr}}
{\bar{w}(Z_i)\cdot \One{(Z_i,\Zbag)\in A}}\bigg\} \\ 
&\leq \Ep{\Ptr_1\times\dots\times \Ptr_n\times \bar{\Ptr}}
{\big(w^*_i(Z_i)-\bar{w}(Z_i)\big)_+} + \dtv(\Ptr_i,\bar{\Ptr}) \\ 
&=\Ep{\Ptr_i}{\big(w^*_i(X,Y)-\bar{w}(X,Y)\big)_+} +\dtv(\Ptr_i,\bar{\Ptr}) ,
\end{align*}
which establishes the desired bound \eqref{eqn:wcp_drift_term_i}.

Next we verify the bound \eqref{eqn:wcp_drift_term_n+1} on Term $n+1$. By
definition of $\bar{\Ptr}$, 
\begin{multline*}
\Ep{\Ptr_1\times\dots\times \Ptr_n\times \bar{\Ptr}}{\bar{w}(Z_{n+1})\cdot
 \One{(Z_{n+1},\Zbag)\in A}} \\
= \frac{1}{n}\sum_{i=1}^n\Ep{\Ptr_1\times\dots\times \Ptr_n\times \Ptr_i}
{\bar{w}(Z_{n+1})\cdot \One{(Z_{n+1},\Zbag)\in A}}.
\end{multline*}
And, for any $i\in[n]$, by definition of $w^*_i$ we can calculate
\begin{multline*}
P_{Z_{n+1},U}(A)=\Pp{\Ptr_1\times\dots\times\Ptr_n\times\Pte}
{(Z_{n+1},\Zbag)\in A} \\
= \Ep{\Ptr_1\times\dots\times\Ptr_n\times\Ptr_i}{w^*_i(Z_{n+1})
  \cdot\One{(Z_{n+1},\Zbag)\in A}},
\end{multline*}
and so taking an average,
\[
P_{Z_{n+1},U}(A) = \frac{1}{n}\sum_{i=1}^n\Ep{\Ptr_1\times\dots\times
\Ptr_n\times\Ptr_i}{w^*_i(Z_{n+1}) \cdot\One{(Z_{n+1},\Zbag)\in A}}.
\]
Therefore, returning to the definition of Term $n+1$, we have
\begin{align*}
\textnormal{Term $n+1$}
&=\sup_A\,\Big\{P_{Z_{n+1},U}(A) - \Ep{\Ptr_1\times\dots\times \Ptr_n\times 
\bar{\Ptr}}{\bar{w}(Z_{n+1})\cdot \One{(Z_{n+1},\Zbag)\in A}}\Big\} \\
&=\sup_A\,\bigg\{ \frac{1}{n}\sum_{i=1}^n\Ep{\Ptr_1\times\dots\times
\Ptr_n\times\Ptr_i}{w^*_i(Z_{n+1}) \cdot\One{(Z_{n+1},\Zbag)\in A}} \\
&\hspace{1in}{} -\frac{1}{n}\sum_{i=1}^n\Ep{\Ptr_1\times\dots\times \Ptr_n
\times \Ptr_i}{\bar{w}(Z_{n+1})\cdot \One{(Z_{n+1},\Zbag)\in A}}\bigg\} \\
&=\sup_A\,\bigg\{ \frac{1}{n}\sum_{i=1}^n  \Ep{\Ptr_1\times\dots\times
\Ptr_n\times\Ptr_i}{\big(w^*_i(Z_{n+1})-\bar{w}(Z_{n+1})\big)\cdot 
\One{(Z_{n+1},\Zbag)\in A}}\bigg\}\\
&\leq \frac{1}{n}\sum_{i=1}^n \Ep{\Ptr_1\times\dots\times\Ptr_n\times\Ptr_i}
{\big(w^*_i(Z_{n+1})-\bar{w}(Z_{n+1})\big)_+} \\
&=\frac{1}{n}\sum_{i=1}^n \Ep{\Ptr_i}{\big(w^*_i(X,Y) - \bar{w}(X,Y)\big)_+}, 
\end{align*}
which completes our proof of the bound \eqref{eqn:wcp_drift_term_n+1}.
\end{document}